\newcounter{ExacSeq}
\newcommand{\Title}{Title}
\numberwithin{equation}{section}
\theoremstyle{definition}\newtheorem{definition}{Definition}[section]
\newtheorem{defititle}[definition]{\Title}
\newtheorem{remark}[definition]{Remark}
\newtheorem{ex}[definition]{Example}
\newtheorem{exs}[definition]{Examples}}
\newtheorem{prop}[definition]{Proposition}
\newtheorem{proposition-definition}[definition]{Proposition-Definition}
\newtheorem{lemma}[definition]{Lemma}
\newtheorem{thm}[definition]{Theorem}
\newtheorem{cor}[definition]{Corollary}
\newtheorem*{prop*}{Proposition}
\newtheorem*{theorem*}{Theorem}
\newcounter{enumi_saved}
\newcommand{\pauseenumerate}{\setcounter{enumi_saved}{\value{enumi}}}
\newcommand{\resumeenumerate}{\setcounter{enumi}{\value{enumi_saved}}}
\newcommand{\cD}{\mathcal{D}}
\newcommand{\cG}{\mathcal{G}}
\newcommand{\cF}{\mathcal{F}}
\newcommand{\cE}{\mathcal{E}}
\newcommand{\cU}{\mathcal{U}}
\newcommand{\bL}{\mathbb{L}}
\newcommand{\id}{{\hbox{id}}}
\newcommand{\cf}{{\it cf.}\/ }
\newcommand{\hol}{\mathrm{hol}}
\newcommand{\Cp}{C_{\mathrm{p}}}
\newcommand{\Cc}{C_{\mathrm{c}}}
\newcommand{\srtimes}{\,{}_s\!\!\times_r}
\def\gpd{\,\lower1pt\hbox{$\longrightarrow$}\hskip-.24in\raise2pt
\hbox{$\longrightarrow$}\,}
\renewcommand{\latticebody}{\drop@{ }}
\newcommand{\R}{\ensuremath{\mathbb R}}
\newcommand{\cX}{\mathcal{X}}
\newcommand{\cN}{\mathcal{N}}
\newcommand{\rmt}{\mathrm{t}}
\newcommand{\Dp}{\cD_\mathrm{p}}
\newcommand{\NN}{\ensuremath{\mathbb N}}
\newcommand{\RR}{\ensuremath{\mathbb R}}
\DeclareMathOperator{\pr}{pr} 
\DeclareMathOperator{\Exp}{Exp}
\DeclareMathOperator{\supp}{supp}
\DeclareMathOperator{\Op}{Op}
\newcommand{\bfa}{\mathbf{a}}
\newcommand{\bfb}{\mathbf{b}}
\newcommand{\bfX}{\mathbf{X}}
\def\act{\mathbin{\hbox{$<\kern-.4em\mapstochar\kern.4em$}}}
\def\ract{\mathbin{\hbox{$\mapstochar\kern-.3em>$}}}
\def\PB(#1,#2,#3,#4){\left\{\begin{matrix}#1&\!\!\!\stackrel{?}{\longrightarrow}&\!\!\!#2\\
\downarrow&&\!\!\!\downarrow\\
#3&\!\!\!\stackrel{?}{\longrightarrow}&\!\!\!#4\end{matrix}\right\}}
\def\pb(#1,#2,#3,#4){ \hom(#1 \to #3, #2 \to #4)}
\begin{document}

\title[The algebra of Schwartz kernels along a singular foliation]{The convolution algebra of Schwartz kernels along a singular foliation}

\author{Iakovos Androulidakis}
\address{National and Kapodistrian University of Athens \\
 Department of Mathematics \\
 Panepistimiopolis \\
 GR-15784 Athens, Greece
}
\email{iandroul@math.uoa.gr}

\author{Omar Mohsen}
\address{Mathematisches Institut der WWU M\"unster, Einsteinstra\ss{}e 62, 48149 M\"unster, Germany.
}
\email{omohsen@uni-muenster.de}

\author{Robert Yuncken}
\address{Laboratoire de Math\'ematiques Blaise Pascal\\ 
          Université Clermont Auvergne, CNRS, LMBP, F-63000 Clermont-Ferrand, France
}
\email{robert.yuncken@uca.fr}

\subjclass[2010]{}

\thanks{O.~Mohsen and R.~Yuncken were supported by the project SINGSTAR of the Agence Nationale de la Recherche (ANR-14-CE25-0012-01). R.~Yuncken was also supported by the CNRS PICS project OpPsi.}

\keywords{foliation, singular foliation, distributions, Schwartz kernel, convolution, pseudodifferential operators, subriemannian geometry}

\begin{abstract}
  Motivated by the study of H\"ormander's sums-of-squares operators and their generalizations, we define the convolution algebra of transverse distributions associated to a singular foliation.  We prove that this algebra is represented as continuous linear operators on the spaces of smooth functions and generalized functions on the underlying manifold, and on the leaves and their holonomy covers.  This generalizes Schwartz kernel operators to singular foliations.  We also define the algebra of smoothing operators in this context and prove that it is a two-sided ideal.
\end{abstract}

\maketitle

\tableofcontents

\section{Introduction}

The goal of this article is to introduce a convolution algebra of distributions on the holonomy groupoid of a singular foliation \cite{Pradines:graph, DebordJDG, AS1}\footnote{Pradines's work \cite{Pradines:graph}, which was clarified by Debord \cite{DebordJDG}, treats only the case where the holonomy groupoid is a Lie groupoid.  The general case is treated in \cite{AS1}.} which generalizes the algebra of Lescure-Manchon-Vassout on a Lie groupoid \cite{LMV}.  The motivation is to lay the analytical foundations for a study of an extremely broad class of pseudodifferential operators, including as special cases:
\begin{itemize}
  \item the Heisenberg calculus and its generalizations \cite{BeaGre, Taylor:microlocal, ChrGelGloPol, Melin:preprint}, 
  \item singular extensions of the above calculi, where the symbol is defined on a family of nilpotent groups with varying dimension,
  \item H\"ormander's sums-of-square operators \cite{Hormander:SoS},
  \item pseudodifferential operators on a singular foliation \cite{AS2}.
\end{itemize}

Specifically, we define an algebra $\cE'_{r,s}(\cF)$ of \emph{transverse} or \emph{fibred} distributions associated to a singular foliation $\cF$ of a manifold $M$.  This algebra acts not only on $C^\infty(M)$ and $\cE'(M)$ as Schwartz kernel operators, but also on the leaves of the foliation and their holonomy covers.  In particular, it will contain the kernels of pseudodifferential operators on a singular foliation introduced in \cite{AS2,AS3}. In forthcoming work, we will explain how this algebra can be used to treat the operators described in the list above. To explain our motivation, we need to recall the relation between groupoids and pseudodifferential operators.
	
This begins, of course, with Connes' idea to use a deformation groupoid to study the $K$-theoretic properties of pseudodifferential operators and their symbols \cite{Connes79}.  Connes thus obtained a generalization of the Atiyah-Singer Index Theorem for (regular) foliations, which has led to an enormous number of generalizations.  For a sample, see \cite{ConnesSkandalis, MonPie, NisWeiXu, LauMonNis, AmmLauNis, DebLesNis, CarMon, So:boundary_groupoids, Monthubert, Nistor:singular, Ponge, VanErp:AS1, VanErp:AS2, BohSch, CarNisQia,  Nistor:Desingularuzation, Come:Fredholm}, and references therein.   

A second revolutionary idea appeared in the ground-breaking paper \cite{DebordSkandalis1}, where Debord and Skandalis observed that the classical pseudodifferential operators can in fact be characterized in terms of the canonical $\RR^\times_+$-action on Connes' tangent groupoid; see also \cite{DebSka:extensions, DebSka:exact_sequences}.  This idea was developed in \cite{vEY2}, and as with Connes' idea one sees that it is a quite general principle.  That is, the $\RR^\times_+$-action on different versions of the tangent groupoid can be used to obtain other well-known pseudodifferential calculi, such as the Heisenberg calculs \cite{BeaGre, Taylor:microlocal} and Melrose's $b$-calculus \cite{Melrose:APS}.

The main functional analytical tool underlying that work is the convolution algebra of transverse distributions on a Lie groupoid, as developed by Lescure-Manchon-Vassout \cite{LMV}, see also \cite{AS2}.  However, in the case of  sums-of-squares operators or pseudodifferential operators on singular foliations, the tangent groupoid will no longer be a Lie groupoid, and the construction of \cite{LMV} no longer applies.  

Instead, the relevant tangent groupoid will be an example of a holonomy groupoid $H(\cF)$ of a singular foliation $(M,\cF)$. The goal of this paper, therefore, is to generalize the results of \cite{LMV} to such groupoids.  The resulting convolution algebra $\cE'_{r,s}(\cF)$ coincides with that of \cite{LMV} in the case of a regular foliation, but contains the pseudodifferential kernels of \cite{AS2} in the singular case.  Of course, it will also contain many other kernels, just as the algebra of Lescure-Manchon-Vassout contains not just the classical pseudodifferential kernels, but also the kernels of other calculi, \emph{e.g.} \cite{BeaGre,	Melin:preprint}, as well as Fourier integral operators \cite{LesVas} and many others.

There are two constructions for this groupoid in the literature, due to Pradines \cite{Pradines:graph, DebordJDG} and Androulidakis-Skandalis \cite{AS1}.   In \cite{AS1, AS2, AS3}, this groupoid was used as a device to carry the analysis of pseudodifferential operators along the leaves of the foliation. Recall that the difficulties with this were the following: 
\begin{itemize} \item The $s$-fibers of $H(\cF)$ are smooth manifolds \cite{Debord2013} but have varying dimension; 
\item The topology of $H(\cF)$ is quite pathological. Specifically, there are examples for which a sequence of points in different fibres can converge to an uncountably infinite set of limit points.\end{itemize}  
In \cite{AS2} it was possible to overcome these difficulties by working on an appropriate class of smooth submersions to this groupoid, called ``bisubmersions''. This enables quite sophisticated analysis.  In particular, the authors of \cite{AS2} produce a calculus of pseudodifferential operators adapted to the geometry of a singular foliation.  

\medskip

Let us explain the philosophy for defining convolution operators in such singular situations.  To begin with,  consider the simple case of operators on a smooth closed manifold $M$. Let $a$ be a distribution on $M\times M$.  Recall that the Schwartz kernel operator $\Op(a)$ defined by
\begin{equation}
 \label{eq:Schwartz_kernel1}
 (\Op(a)f) (x) := \int_y a(x,y) f(y) \,dy
\end{equation}
gives a continuous linear operator $\Op(a):C^\infty(M) \to C^\infty(M)$ if and only if the kernel $a$ is smooth in the first variable, \emph{i.e.}, semiregular.  In groupoid language, this condition corresponds to requiring that $a$ be transverse to the $r$-fibration of the pair groupoid $M\times M$.  This point of view was introduced by Skandalis and the first author in \cite{AS2}. Lescure, Manchon and Vassout \cite{LMV} showed that the space of transverse distributions on an arbitrary Lie groupoid $G\rightrightarrows M$ forms an algebra under convolution, with a natural representation as Schwartz kernel operators on $C^\infty(M)$. These operators also act on functions on the leaves of the foliation underlying $G\rightrightarrows M$, as well as the $s$-fibers of $G$.

The apparatus developed in \cite{LMV} cannot be applied directly to a singular holonomy groupoid, because of the difficulties we described above. Instead, as in \cite{AS1,AS2,AS3}, we must lift all the analysis to bisubmersions.  We will recall the definition of a bisubmersion in Section \ref{sec:groupoid}.  For now, suffice it to say that bisubmersions act like the charts on a Lie groupoid, except that they may be only local submersions,  instead of local diffeomorphisms.
		
For instance, in order to define linear operators on $C^\infty(\RR^n)$, we have the standard form \eqref{eq:Schwartz_kernel1} above.  But nothing is stopping us from adding additional dimensions to the kernel $a$.  If $b$ is a distribution on $\RR^n \times \RR^k \times \RR^n$, smooth in $x$, then we could define an operator $\Op(b): C^\infty_c(\RR^n) \to C^\infty(\RR^n)$ by
\begin{equation}
 \label{eq:Schwartz_kernel2}
 (\Op(b)f)(x) = \int_{(z,y)\in\RR^k\times\RR^n} b(x,z,y) f(y) \,dz\,dy.
\end{equation}
(The support of $b$ needs to be compact in the $z$-direction for this to converge, but let's defer discussion of support conditions until later.)  For example, if $\varphi\in \Cc^\infty(\RR^n)$ is a smooth function of total mass $1$, then putting $b(x,z,y) = a(x,y)\varphi(z)$ we obtain exactly the same operator as in \eqref{eq:Schwartz_kernel1}.

Of course, adding these extraneous dimensions is completely unnecessary in this situation, since the Schwartz kernel theorem tells us that all continuous linear operators on $C^\infty(M)$ can be written uniquely in the form \eqref{eq:Schwartz_kernel1}.  But as pointed out in \cite{AS2}, these additional dimensions are crucial for making sense of pseudodifferential operators on singular foliations, where the dimensions of the leaves can vary from point to point.

\medskip

The paper is structured as follows.  In Section \ref{sec:groupoid} we recall the construction of the holonomy groupoid of a singular foliation $(M,\cF)$ from \cite{AS1}.  As mentioned above, its building blocks are bisubmersions and  all of our constructions throughout the paper are carried out at the level of bisubmersions rather than on the holonomy groupoid per se.
	
In Section \ref{sec:distributions} we define $r$-fibred distributions on bisubmersions and discuss various algebraic operations, including convolution and transposition. The algebra $\cE'_r(\cF)$ of properly supported $r$-fibred distributions on the holonomy groupoid is constructed in Section \ref{sec:ErF}. In \S \ref{sec:Op} we discuss the action of $\cE'_r(\cF)$ on $C^{\infty}(M)$. The right ideal of smooth $r$-fibred densities is discussed in \S \ref{sec:smooth_ideal}. In Section \ref{sec:proper} we define transverse distributions on the holonomy groupoid.  Roughly speaking, these are distributions which can be realized as smooth families of distributions on both the $r$-fibres and the $s$-fibres.  In \S \ref{sec:actgen} we show that the transverse distributions act on each of the spaces $C^{\infty}(M)$, $C^{\infty}_c(M)$, $\cE'(M)$ and $\cD'(M)$. Finally, in Section \ref{sec:actholcov}, we give the analogous actions on a leaf and on its holonomy cover.

\subsection{Acknowledgements}

The authors would like to thank Georges Skandalis for inspiration and encouragement, and Erik van Erp for many discussions.  We also thank the anonymous referees for their suggestions, which have enhanced the importance of the results.

\section{The path holonomy groupoid of a singular foliation}

\label{sec:groupoid}

Let $M$ be a smooth finite dimentional manifold. Following \cite{AS1}, we define a singular foliation $(M,\cF)$ as a $C^{\infty}(M)$-submodule $\cF$ of the module $\cX_c(M)$ of compactly supported vector fields of $M$ which is locally finitely generated and involutive.

\subsection{Bisubmersions}\label{sec:bisubm}

To keep this paper self-contained, we recall here the notion of a bisubmersion from \cite{AS1}. For the reader unfamiliar with bisubmersions, keep in mind that the archetypal example is an open subset $U$ of a Lie groupoid $G$ over $M$, equipped with the restrictions of the range and source maps $r$ and $s$.  In that example, the underlying foliation $\cF$ is the foliation of $M$ by the orbits of $G$. Specifically, the module $\cF$ is the image by the anchor map $\rho : AG \to TM$ of the $C^{\infty}(M)$-module of compactly supported sections $\Gamma_c(AG)$. 

General bisubmersions replace charts for more singular groupoids.  They serve as a lifting of a region in the singular space to a nice locally euclidean manifold.

Let $(M,\cF)$ be a foliation.  If $\varphi : N \to M$ is a smooth map between manifolds, we write
\[
\varphi^{-1}\cF = \{ Y \in \cX_c(N) : d\varphi\circ Y = \sum_{i=1}^n f_i(X_i \circ \varphi) \text{ for } f_i \in C^{\infty}_c(N), X_i \in \cF \}.
\]
If $\varphi$ is a submersion, which will always be the case in what follows, this means that the elements of $\varphi^{-1}\cF$ are vector fields on $N$ which project under $\varphi$ to vector fields in $\cF$.  Then $\varphi^{-1}\cF$ is a also a singular foliation (locally finitely generated and involutive).

\begin{enumerate}

\item 
A \emph{bisubmersion} $(U,r_U,s_U)$ of $(M,\cF)$ is a smooth, finite dimensional, Hausdorff manifold $U$ equipped with two submersions $r_U,s_U:U \to M$, called \emph{range} and \emph{source}, such that
\[
s_U^{-1}\cF = r_U^{-1}\cF = C_c^\infty(U;\ker ds_U) + C_c^\infty(U;\ker dr_U).
\]
Recall \cite{AS1} that this condition implies the following geometric property: If $L_x$ is the leaf of $(M,\cF)$ at $x \in M$ then $r$ maps the fiber $s^{-1}(x)$ to $L_x$ and it is a submersion. Likewise, $s : r^{-1}(L_x) \to L_x$ is a submersion.

We will often blur the distinction between a bisubmersion $(U,r_U,s_U)$ and its underlying space $U$, and we write $r$ and $s$ instead of $r_U$ and $s_U$ when the bisubmersion $U$ is evident from the context.

\item
A \emph{morphism} of bisubmersions from $(U,r_U,s_U)$ to $(V,r_V,s_V)$ is a smooth map $\varphi:U \to V$ such that $r_U = r_V\circ\varphi$ and $s_U = s_V\circ\varphi$.  A \emph{local morphism} at $u\in U$ is a morphism of bisubmersions from from $(U',r_U,s_U)$ to $(V,r_V,s_V)$ for some neighbourhood $U'$ of $u$.

\item
The \emph{inverse} of a bisubmersion $(U,r_U,s_U)$ is the bisubmersion $(U,s_U,r_U)$.  We will sometimes use $U^\rmt$ to denote the space $U$ equipped with this inverse bisubmersion structure.

\item
The \emph{composition} $(U\circ V , r_{U\circ V}, s_{U\circ V})$ of two bisubmersions $(U,r_U,s_U)$ and $(V,r_V,s_V)$ is the bisubmersion with $U\circ  V := U {}_{s_U}\times_{r_V} V$ and maps $r_{U\circ V}(u,v) = r_U(u)$ and $s_{U\circ V}(u,v) = s_V(v)$.

More generally, if $A \subset U$ and $B\subset V$, we write $A\circ B = A{}\srtimes B$.

\item
A bisection of a bisubmersion $U$ is a locally closed submanifold $S \subset U$ such that the restrictions of $r_U$ and $s_U$ to $S$ are diffeomorphisms onto open subsets of $M$.  Any bisection $S$ induces a local diffeomorphism $\Phi_S$ on $M$ 
\[
 \Phi_S = r_U|_S \circ s_U|_S^{-1}.
\]
If $\Phi_S$ is the identity on its domain, $S$ is called an \emph{identity bisection}.

\pauseenumerate
\end{enumerate}

We will really only be interested in bisubmersions of the following particular type. Let $\bfX=(X_1,\ldots, X_m)$ be a generating family of vector fields for $\cF$ in an open subset $M_0 \subseteq M$.  Consider the map
\begin{equation}
 \label{eq:Exp}
\Exp_{\bfX} : \R^m \times M_0 \to M
\end{equation}
where $\Exp_{\bfX}(\xi,x)$ is the unit-time flow of $x$ along the vector field $\sum_i \xi_i X_i$, when defined. This map is defined on some sufficiently small neighbourhood $U$ of $\{0\} \times M_0$ in $\RR^m\times M_0$.

\begin{definition}[\cite{AS1}]
 \label{def:path_holonomy_bisubmersion}
 With the above notation, the set $U \subseteq \RR^n\times M_0$ is a bisubmersion when equipped with the maps
 \[
  s_U(\xi,x) = x, \qquad r_U(\xi,x) = \Exp_{\bfX}(\xi,x).
 \]
 We call $(U,r_U,s_U)$ a \emph{path-holonomy bisubmersion} associated to the local generating family $\bfX=(X_1,\ldots, X_m)$.
 If $\bfX$ is a minimal generating family at $x\in M_0$ then $(U , r_U, s_U)$ is called a \emph{miminal path-holonomy bisubmersion at $x$}.
\end{definition}

\subsection{Atlas of bisubmersions}\label{sec:atlas}

The holonomy groupoid of a singular foliation is defined in terms of an atlas of bisubmersions, as follows.

\begin{enumerate}
\resumeenumerate

\item 
A bisubmersion $(V,r_V,s_V)$ is \emph{adapted} to a family $\cU$ of bisubmersions if for every $v\in V$ there exists a local morphism at $v$ from $(V,r_V,s_V)$ to some bisubmersion in $\cU$.

\item
A family $\cU$ of second countable bisubmersions of $(M,\cF)$ is called a \emph{singular groupoid atlas}, or just \emph{atlas}, if the source images $\{s(U) :U\in\cU\}$ cover $M$ and all inverses and compositions of elements of $\cU$ are adapted to $\cU$.

\item
An atlas $\cU$ is called \emph{maximal} if every bisubmersion which is adapted to $\cU$ is already in $\cU$. Any atlas $\cU$ can be completed to the maximal atlas $\tilde{\cU}_\mathrm{max}$ of all bisubmersions adapted to $\cU$.

\item
Given any family $\cU_0$ of bisubmersions of $(M,\cF)$ whose source images $\{s_U(U) : U\in\cU_0\}$ cover $M$, the \emph{minimal atlas generated by $\cU_0$} is the set of all iterated compositions of elements of $\cU_0$ and their inverses, and the \emph{maximal atlas generated by $\cU_0$} is the maximal completion of this atlas in the sense above.

\end{enumerate}

\begin{definition}
 \label{def:Uhol}
 Let $(M,\cF)$ be a singular foliation.  We write $\cU_\hol(\cF)$, or just $\cU_\hol$, for the maximal atlas generated by all path-holonomy bisubmersions.
\end{definition}

\begin{remark}
\label{rmk:set-theory}
 Let us make a couple of technical remarks about this definition.  Firstly, a maximal atlas is too large to be a set.  This doesn't actually matter in what follows, but if desired, the problem could averted  by allowing only bisubmersions where $U$ is an embedded submanifold of $\RR^n$ for some $n$.

 Secondly, in \cite{AS2}, the authors work mainly with the minimal path-holonomy atlas, and not its maximal completion.  This doesn't change anything in practice.  We are favouring the maximal atlas because it slightly simplifies the natural equivalence relation for distributional kernels, see Section \ref{sec:ErF}. 
 
 In this article for simplicity we will only work with maximal atlases.
\end{remark}

\begin{exs}\label{exs:regatlas}
\begin{enumerate}
\item
Suppose that the foliation $(M,\cF)$ is defined by a Lie groupoid $\cG \gpd M$. Then we can consider the maximal atlas generated by a single bisubmersion, which is none other than $\cG$. This atlas contains all finite compositions $\cG \circ\ldots\circ \cG$.  The multiplication $\cG \circ\ldots\circ \cG \to \cG$ and inversion $\cG^\rmt \to \cG$ maps are morphisms.

\item 
Recall from \cite[Ex. 3.4 \& \S 3.2]{AS1} that when the foliation $(M,\cF)$ is regular, its holonomy groupoid $H(\cF)$ constructed by Winkelnkemper is a Lie groupoid, so it admits an atlas generated by a single bisubmersion. It was also shown in \cite{AS1} that this atlas is equivalent with the path-holonomy atlas $\cU_\hol$.
\end{enumerate}
\end{exs}

\subsection{Holonomy groupoid}
\label{sec:holonomy_groupoid}

An atlas of bisubmersions has an associated singular groupoid (\cf \cite{AS1}).  We won't ever actually use this groupoid in what follows, since we'll define the convolution algebra of fibred distributions directly on the atlas of bisubmersions.  But for the sake of completeness, let us finish this section with the construction.

\begin{definition}[\cite{AS1}]
 \label{def:groupoid}
 Let $\cU$ be an atlas of bisubmersions for the foliation $(M,\cF)$. The \emph{groupoid associated with $\cU$} is $\cG(\cU) = \coprod_{U \in \cU}U/\sim$ where the equivalence relation is defined as follows: $U \ni u \sim v \in V$ if there exists a local morphism from $(U,r_U,s_U)$ to $(V,r_V,s_V)$ sending $u$ to $v$.  
\end{definition}

This is indeed a topological groupoid with base $M$, see \cite[Proposition 3.2]{AS1}. It is proved in \cite{Debord2013} that it is longitudinally smooth.  More specifically, the $s$-fibres $H(\cF)_x$ are smooth manifolds and $r$ restricts to a surjective submersion from $H(\cF)_x$ to the leaf $L_x$ through $x$.   Moreover, if we put $H(\cF)_{L_x} = s^{-1}(L_x) = r^{-1}(L_x)$, then $r : H(\cF)_{L_x} \to L_x$ is also a surjective submersion.

In \cite[\S 3.2]{AS1} it is shown that, when the module $\cF$ is projective, then the groupoid $\cG(\cU_{hol})$ is a Lie groupoid and coincides with the holonomy groupoid constructed by Debord. In particular, when the foliation is regular, $\cG(\cU_{hol})$ coincides with the holonomy groupoid $H(\cF)$.

\section{Distributions on a singular groupoid}
\label{sec:distributions}

\subsection{Fibred distributions on a submersion}\label{sec:fibdistrsubm}

We recall some basic definitions concerning fibred distributions.  This material is adapted from \cite{LMV}.  The basic ideas appear already in \cite{AS2}.

Let $q:U\to M$ be a submersion.  Then $C^{\infty}(U)$ becomes a left $C^{\infty}(M)$-module with 
\[
 f \cdot \phi = (q^{*}f)\phi
\]
for all $\phi \in C^{\infty}(U)$ and $f \in C^{\infty}(M)$.

\begin{enumerate}
\item A \emph{properly supported $q$-fibred distribution} is a continuous linear map 
\[
a : C^\infty(U) \to C^\infty(M); \qquad \phi \mapsto (a,\phi)
\]
which is $C^\infty(M)$-linear with respect to the above $C^\infty(M)$-structure on $C^\infty(U)$. The continuity of $a$ is understood with respect to the usual topology on $C^{\infty}(M)$ and $C^{\infty}(U)$ of  uniform convergence of derivatives on compact subsets.

\item
The space of properly supported $q$-fibred distributions on $U$ is denoted $\cE'_q(U)$.
We equip $\cE'_q(U)$ with the topology inherited as a closed subspace $\bL(C^\infty(U), C^\infty(M))$ with the topology of uniform convergence on bounded subsets.

\item\label{part c} Let $x\in M$.  By $C^\infty(M)$-linearity, the value of $(a,\phi)$ at $x$ depends only on the restriction of $\phi$ on the fibre $q^{-1}(x)$, and this defines a compactly supported distribution $a_x \in \cE'(q^{-1}(x))$.  The $q$-fibred distribution $a$ is uniquely determined by the family of distributions $(a_x)_{x\in M}$.

\item
Let $V\subseteq U$ be open.  The $q$-fibred distribution $a$ \emph{vanishes on $V$} if $(a,\phi) = 0$ whenever $\supp(\phi) \subseteq V$.  The \emph{support} of $a$, denoted by $\supp(a)$, is the complement of the largest open subset of $U$ on which $a$ vanishes.  For any $a\in\cE'_q(U)$, the support $\supp(a)$ is a $q$-proper set, meaning that the restriction $q : \supp(a) \to M$ is a proper map.
\end{enumerate}

\begin{ex}
 \label{ex:Dirac_distribution}
 Let $S \subset U$ be a local section of $q:U\to M$, meaning that $S$ is a locally closed submanifold of $U$ and $q|_S$ is a diffeomorphism of $S$ onto an open subset of $M$.  
 Fix also a smooth function $c\in \Cc^\infty(M)$ with support in $q(S)$.   The \emph{$q$-fibred Dirac distribution on $S$ (with coefficient $c$)}, denoted $c\Delta_S\in\cE'_q(U)$, is the $q$-fibred distribution defined by the formula
 \[
 (c\Delta_S, \phi)(x) =
 \begin{cases}
 c(x)(\phi \circ q|_S^{-1})(x),  
 & \text{if } x\in q(S) \\
 0,
 & \text{otherwise,}
 \end{cases}
 \]
for $\phi\in C^\infty(U)$.
In other words, upon identifying $S$ with its image $q(S)\subset M$, $c\Delta_S$ is given by evaluating $\phi$ on $S$ and then multiplying by $c$ (to ensure the result is smooth).  For any $x\in q(S)$, the restriction of $c\Delta_S$ to the fibre $q^{-1}(x)$ is the multiple $c(x)\delta_{\tilde{x}}$ of the Dirac distribution at the preimage $\tilde{x}=q|_S^{-1}(x)$.
\end{ex}

\begin{ex}
	\label{ex:PsiDOs}
	Let $S\subset U$ be an identity bisection of a bisubmersion $U$.  If $a$ is a smooth family of distributions on the $r$-fibres of $U$ with pseudodifferential singularities along $S$, as in\cite[\S1.2.2]{AS2},  then $a$ defines an element of $\cE'_r(U)$. 
\end{ex}

We end this section with two operations on fibred distributions which will be heavily used in what follows.

\subsubsection{Pushforward, or integration along fibres}

\begin{definition}\label{dfn:pushforward}
Let $q:U \to M$ and $q':U'\to M$ be submersions and let $\pi:U'\to U$ be a smooth morphism of submersions, 
meaning that $q' = q\circ\pi$. There is an induced linear map $\pi_*:\cE'_{q'}(U') \to \cE'_{q}(U)$, called \emph{pushforward} or \emph{integration along the fibres}, defined by
\[
(\pi_*a,\phi) := (a,\pi^*\phi)
\]
for $a\in\cE'_{q\circ\pi}(U')$ and $\phi\in C^\infty(U)$.
\end{definition}

\begin{ex}
	\label{ex:extension_by_zero}
 Let $U'$ be an open subset of a bisubmersion $U$.  The inclusion map $\iota:U' \to U$ is a morphism of bisubmersions.  The pushforward $\iota_*:\cE'_q(U') \to \cE'_q(U)$ corresponds to extension by zero.  We can thus identify $\cE'_q(U')$ as a subspace of $\cE'_q(U)$.
\end{ex}

\begin{lemma}
 \label{lem:integration_is_surjective}
 If the morphism of submersions $\pi: U' \to U$ is a surjective submersion, then $\pi_*:\cE'_{q'}(U') \to \cE'_{q}(U)$ is surjective.
\end{lemma}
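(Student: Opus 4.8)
The plan is to construct a right inverse for $\pi_*$ using a partition of unity together with a smooth family of densities along the fibres of $\pi$. Since $\pi:U'\to U$ is a surjective submersion and $q' = q\circ\pi$, each fibre $q'^{-1}(x) \subseteq U'$ is mapped by $\pi$ onto $q^{-1}(x) \subseteq U$, and $\pi$ restricts to a submersion $q'^{-1}(x) \to q^{-1}(x)$ (the fibres of this restriction being the fibres of $\pi$ itself). Working fibrewise via part \ref{part c} of the definition of $\cE'_q(U)$, it suffices to produce, for each given $a\in\cE'_q(U)$, a family $b = (b_x)_{x\in M}$ with $b_x\in\cE'(q'^{-1}(x))$, depending smoothly on $x$ in the appropriate sense (i.e. defining an element of $\cE'_{q'}(U')$), and such that $\pi_*b = a$, that is $(b_x, \psi|_{q'^{-1}(x)}) = (a_x, (\pi_*^{x}\psi)|_{q^{-1}(x)})$ for all $\psi\in C^\infty(U')$, where $\pi_*^x$ denotes integration over the fibres of $\pi$.

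The key steps, in order: First, choose a smooth everywhere-positive density $\mu$ along the fibres of $\pi$ (these fibres are the submanifolds $\pi^{-1}(u)$, $u\in U$); such a density exists because $\pi$ is a submersion, by a standard partition-of-unity argument on $U'$. Second, choose $\chi\in C^\infty(U')$ that is proper over $U$ (i.e. $\pi:\supp(\chi)\to U$ is proper) with $\int_{\pi^{-1}(u)}\chi\,\mu = 1$ for every $u\in U$; again this is built from a partition of unity on $U$ pulled back and combined with $\mu$, using that $\pi$ is surjective so that every fibre is nonempty. Third, define $b$ by
\[
(b,\psi) := \bigl(a,\ \pi_*(\chi\,\psi\,\mu)\bigr), \qquad \psi\in C^\infty(U'),
\]
where $\pi_*(\chi\,\psi\,\mu)\in C^\infty(U)$ is the fibre integral $u\mapsto \int_{\pi^{-1}(u)}\chi(u')\psi(u')\,\mu(u')$; the properness of $\supp(\chi)$ over $U$ guarantees this lands in $C^\infty(U)$ and the map $\psi\mapsto \pi_*(\chi\psi\mu)$ is continuous and $C^\infty(M)$-linear (since $q'^*f = \pi^*q^*f$ pulls out of the fibre integral), so $b\in\cE'_{q'}(U')$. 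Fourth, verify $\pi_*b = a$: for $\phi\in C^\infty(U)$ one has $(\pi_*b,\phi) = (b,\pi^*\phi) = (a, \pi_*(\chi\,(\pi^*\phi)\,\mu)) = (a, \phi\cdot\pi_*(\chi\mu)) = (a,\phi)$, using the projection formula $\pi_*(\chi\,(\pi^*\phi)\,\mu) = \phi\cdot\pi_*(\chi\mu)$ and the normalization $\pi_*(\chi\mu) = 1$.

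The main obstacle is the construction of the globally normalized cutoff $\chi$ with $\supp(\chi)$ proper over $U$: one must simultaneously ensure fibrewise mass $1$ over all of $U$ and properness of the support, which requires a locally finite cover of $U$ by opens over which $\pi$ admits sections (or at least over which one can find compactly-fibre-supported bump functions), pulled back and patched. This is the standard "cutoff function for a proper/surjective submersion" construction, analogous to the cutoffs used for proper groupoid actions, and the verification that the resulting $b$ is genuinely continuous as a map $C^\infty(U')\to C^\infty(M)$ (not merely linear) is routine but should be stated. Everything else — the projection formula, $C^\infty(M)$-linearity, and the final identity — is a direct computation.
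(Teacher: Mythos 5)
Your proof is correct and is essentially the paper's argument in globalized form: the paper fixes a mass-one bump $\omega$ in the local model $U\times\R^k$ and patches via a partition of unity, while you package the same data as a fibrewise density $\mu$ together with a normalized, $\pi$-properly supported cutoff $\chi$, so that your section $\psi\mapsto\pi_*(\chi\psi\mu)$ is exactly the paper's averaging map written globally. Both routes produce the same continuous $C^\infty(M)$-linear right inverse for $\pi_*$, so there is nothing substantive to add.
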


\begin{proof}
 We begin with the case where $U'=U \times \R^k$ for some $k\in\NN$ and $\pi$ is the projection onto the first variable.  Fix a positive function $\omega\in C_c^\infty(\RR^n)$ with $\int_{\RR^n} \omega(\xi)\, d\xi = 1$. If $a\in \cE'_q(U)$, then we define an $a'\in \cE'_{q'}(U')$ by the pairing
 \[
  (a', \phi) = (a, \textstyle\int_{\R^k} \phi(\;\cdot\;,\xi) \omega(\xi) \,d\xi ) \qquad \phi \in C^\infty(U\times \R^k).
 \]
 Then $\pi_*a'=a$, as desired.

The general case follows by from the above by using a partition of unity argument.
\end{proof}

\subsubsection{Pullback over a base map}

The following construction is described in \cite[Proposition 2.15]{LMV}.

\begin{definition} 
 \label{def:pullback}
 Let $p:N\to M$ be any smooth map, and consider the pullback diagram
 \[
  \xymatrix{
    N {}_p\!\times_q U \ar[r]^-{\pr_U} \ar[d]_{\pr_N} 
    & U \ar[d]^{q} \\
    N \ar[r]^{p} 
    & M.
    }
 \]
 There is an associated linear map $p^* : \cE'_q(U) \to \cE'_{\pr_N}(N {}_p\!\times_q U)$, called the \emph{pullback along $p$}, which is uniquely characterized by the property
 \begin{equation}
 \label{eq:pullback_distribution}
  (p^*a, \pr_U^*\phi) = p^*(a,\phi)
 \end{equation}
 for all $a\in\cE'_q(U)$, $\phi\in C^\infty(U)$. Explicitly, $p^*a$ is defined on the fibres of $\pr_N$ by $(p^*a)_y=a_{p(y)}$, where $y\in N$ and we are using the canonical identification $\pr_N^{-1}(y) = q^{-1}(p(y))$.
\end{definition}

\begin{lemma}
 \label{lem:pullback-functoriality}
 Let $q:U\to M$ be a submersion and $p:N\to M$ a smooth map, as above.

 \begin{enumerate}
  \item If $q':U'\to M$ is a submersion and $\pi:U'\to U$ a morphism of submersions, then for any $a\in \cE'_{q'}(U')$,
  \[
   p^*(\pi_*a) = (\id\times\pi)_*(p^*a).
  \]

  \item Let $p':N'\to N$ be a smooth map, then $(p\circ p')^*= p'^*\circ p^*$. 
   \end{enumerate}
\end{lemma}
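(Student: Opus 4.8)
The plan is to verify both identities fibrewise, using the explicit descriptions of pushforward, pullback and composition of distributions given in the excerpt (Definitions \ref{dfn:pushforward}, \ref{def:pullback}, and part \eqref{part c} of the definition of fibred distributions), together with the uniqueness clause that a fibred distribution is determined by its family of fibre distributions. Since all three operations are characterized by a pairing identity against pullbacks of smooth functions, the cleanest route is to test each side against test functions of the form $\pr^*\phi$ and invoke uniqueness.

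For part (a), I would start from the characterizing property \eqref{eq:pullback_distribution} of $p^*$. Given $a\in\cE'_{q'}(U')$ and $\psi\in C^\infty(U)$, I compute
\[
(p^*(\pi_*a),\pr_U^*\psi) = p^*(\pi_*a,\psi) = p^*(a,\pi^*\psi),
\]
the first equality by \eqref{eq:pullback_distribution} applied to $\pi_*a\in\cE'_q(U)$, the second by Definition \ref{dfn:pushforward}. On the other side, note that $\pr_U\circ(\id\times\pi) = \pi\circ\pr_{U'}$ as maps $N{}_p\!\times_q U' \to U$, so $(\id\times\pi)^*(\pr_U^*\psi) = \pr_{U'}^*(\pi^*\psi)$; hence by Definition \ref{dfn:pushforward} and then \eqref{eq:pullback_distribution} applied to $a$,
\[
\bigl((\id\times\pi)_*(p^*a),\pr_U^*\psi\bigr) = \bigl(p^*a,\pr_{U'}^*(\pi^*\psi)\bigr) = p^*(a,\pi^*\psi).
\]
The two computations agree. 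Since functions of the form $\pr_U^*\psi$ need not exhaust $C^\infty(N{}_p\!\times_q U)$, I then appeal to the uniqueness clause in Definition \ref{def:pullback}: both $p^*(\pi_*a)$ and $(\id\times\pi)_*(p^*a)$ lie in $\cE'_{\pr_N}(N{}_p\!\times_q U)$ and agree on all $\pr_U^*\psi$, and that is precisely the property characterizing $p^*(\pi_*a)$ uniquely, so they coincide. (Alternatively, one can check equality of the fibre distributions at each $y\in N$ using the explicit formulas $(p^*b)_y = b_{p(y)}$ and the compatibility of pushforward with restriction to fibres; I would mention this as the quicker informal check.)

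For part (b), I would again use the fibrewise description. For $y'\in N'$, unwinding the canonical identifications of fibres gives $\bigl((p\circ p')^*a\bigr)_{y'} = a_{p(p'(y'))}$, while $\bigl(p'^*(p^*a)\bigr)_{y'} = (p^*a)_{p'(y')} = a_{p(p'(y'))}$; since a fibred distribution is determined by its fibre family (part \eqref{part c}), this suffices — modulo checking that the two ambient pullback spaces $\cE'_{\pr_{N'}}(N'{}_{p\circ p'}\!\times_q U)$ and $\cE'_{\pr_{N'}}(N'{}_{p'}\!\times_N (N{}_p\!\times_q U))$ are identified via the canonical iterated-fibre-product isomorphism, which I would state explicitly. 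To keep it uniform with part (a), I might instead just test against $\pr_U^*\phi$: $\bigl((p\circ p')^*a,\pr_U^*\phi\bigr) = (p\circ p')^*(a,\phi) = p'^*\bigl(p^*(a,\phi)\bigr) = p'^*\bigl(p^*a,\pr_U^*\phi\bigr) = \bigl(p'^*(p^*a),\pr_U^*\phi\bigr)$, and conclude by uniqueness.

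The only real subtlety — the "hard part", though it is more bookkeeping than difficulty — is being careful with the canonical identifications of the various fibred products and making sure the maps $\pr_U\circ(\id\times\pi)=\pi\circ\pr_{U'}$ (for (a)) and the associativity iso for iterated pullbacks (for (b)) are written down correctly, so that the uniqueness clause of Definition \ref{def:pullback} applies on the nose. Everything else is a direct unwinding of the defining pairings.
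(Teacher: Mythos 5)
Your proof is correct and follows essentially the same route as the paper's: both identities are verified by pairing against test functions of the form $\pr_U^*\phi$, using the identity $\pr_U\circ(\id\times\pi)=\pi\circ\pr_{U'}$ for (a), and then invoking the uniqueness clause of Equation \eqref{eq:pullback_distribution}. Your extra remarks on the fibrewise check and the identification of iterated fibre products are sound but not needed beyond what the paper records.
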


\begin{proof}

\begin{enumerate}
  \item  Let $\phi \in C^\infty(U)$. One has 
  \begin{align*}
   ((\id\times\pi)_*p^*a,\pr_U^*\phi) 
    &= (p^*a, (\pr_U\circ(\id\times\pi)))^*\phi) \\
    &= (p^*a,(\pi\circ\pr_{U'})^*\phi) \\
    &= p^*(a,\pi^*\phi) \\
    &= p^*(\pi_*a,\phi).  \end{align*}
The result then follows from uniqueness in Equation \eqref{eq:pullback_distribution}.

  \item Let $\pr'_{U}:N' {}_{p\circ p'}\!\!\times_q U\to U$ be the natural projection, $\phi \in C^\infty(U)$. One has
\begin{align*}
({p'}^*(p^*a),(\pr'_{U})^*\phi)=p'^*(p^*a,\pr_U^*\phi)={p'}^*p^*(a,\phi)=(p\circ p')^*(a,\phi).
\end{align*} The result then follows from uniqueness in Equation \eqref{eq:pullback_distribution}.%
\qedhere
 \end{enumerate}

\end{proof}

Finally, we remark that pushforward and pullback are continuous with respect to the topologies on fibred distributions.

\subsection{Convolution of fibred distributions on bisubmersions}

Now we consider fibred distributions on a bisubmersion $U$ for a foliation $(M,\cF)$.  Thus we have two submersions $r,s:U \to M$ and we can define the spaces $\cE'_r(U)$ and $\cE'_s(U)$ of linear maps from $C^\infty(U)$ to $C^\infty(M)$.  For $a\in \cE'_s(U)$, we will write $a_x$ for the distribution on the $s$-fibre $s^{-1}(x)$ and for $b\in \cE'_r(U)$ we write $b^x$ for the distribution on the $r$-fibre $r^{-1}(x)$.

Recall that if $U$ and $V$ are bisubmersions over $M$, then their composition is $U \circ V = U {}_s\!\times_r V$ with range and source maps $r_{U \circ V}(u,v) = r(u)$ and $s_{U \circ V}(u,v) = s(v)$. 
Therefore, given $b\in\cE'_r(V)$ we can define a pullback distribution $s_U^*b \in \cE'_{\pr_U}(U\circ V)$ as in Definition \ref{def:pullback} above, and hence make the following definition.

\begin{definition} \label{def:convolution}
 Let $U$, $V$ be bisubmersions for $(M,\cF)$.  We define the \emph{convolution product} of $a\in \cE'_r(U)$ and $b\in\cE'_r(V)$ to be the $r$-fibred distribution
 \[
   a*b := a \circ s_U^*b \in \cE'_r(U\circ V).
 \]
 Similarly, the \emph{convolution product} of $a\in \cE'_s(U)$ and $b\in\cE'_s(V)$ is the $s$-fibred distribution
 \[
  a * b := b \circ r_V^*a \in \cE'_s(U\circ V).
 \]

 The \emph{transpose} $a^\rmt\in\cE'_s(U^\rmt)$ of an $r_U$-fibred distribution $a\in\cE'_r(U)$ is $a^\rmt := a$ but viewed as an $s_{U^\rmt}$-fibred distribution on the inverse bisubmersion $U^\rmt = U$.  Likewise, the transpose of an $s$-fibred distribution $b\in\cE'_s(U)$ is $b^\rmt = b \in \cE'_r(U^\rmt)$.
\end{definition}

Note that convolution  $\cE'_r(U) \times \cE'_r(V) \to \cE'_r(U\circ V)$ is separately continuous, since it is built from the continuous operations of pullback and composition.  Likewise for convolution of $s$-fibred distributions and transposition.
\begin{remark}
In the definition of convolution, we should not exclude the possibility of the empty bisubmersion $(\emptyset, r_\emptyset, s_\emptyset)$ where $r_\emptyset$ and $s_\emptyset$ are the empty maps.  Here, convention says that $C^\infty(\emptyset) = \{0\}$, so that $\cE'_r(\emptyset)$ and $\cE'_s(\emptyset)$ contain only the zero map.  This comes into play when considering a convolution of $r$-fibred distributions $a\in\cE'_r(U)$ and $b\in\cE'_s(V)$ such that $s(U)$ and $r(V)$ are disjoint, since then $U\circ V = \emptyset$ and hence $a*b=0$.
\end{remark}

This is a particular case of the following lemma.

\begin{lemma}
 \label{lem:disjoint_support}
 For any $a\in \cE'_r(U)$ and $b\in\cE'_r(V)$ we have 
 \[
  \supp(a\ast b) \subseteq \supp(a)\circ\supp(b).
 \]
 In particular, if $s(\supp(a)) \cap r(\supp(b)) = \emptyset$ then $a*b=0$.

 Analogous statements hold for $a\in \cE'_s(U)$, $b\in\cE'_s(V)$.
\end{lemma}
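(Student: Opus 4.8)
The plan is to prove the support statement by unwinding the definition of convolution $a*b = a \circ s_U^*b$ into its two constituent operations — pullback $s_U^*$ and composition (which is really a pushforward along $\mathrm{pr}_U$) — and tracking how each of them interacts with supports. The second assertion is then an immediate corollary: if $s(\supp(a)) \cap r(\supp(b)) = \emptyset$, then $\supp(a)\circ\supp(b) = \supp(a){}_s\!\times_r\supp(b) = \emptyset$, and a fibred distribution with empty support is the zero map, so $a*b = 0$. The analogous $s$-fibred case follows verbatim with the roles of $r$ and $s$ interchanged.

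For the main inclusion, first I would recall that $s_U^*b \in \cE'_{\mathrm{pr}_U}(U\circ V)$ is characterized on fibres by $(s_U^*b)_u = b_{s_U(u)} = b^{s_U(u)}$ (using Definition~\ref{def:pullback} and the identification $\mathrm{pr}_U^{-1}(u) = r_V^{-1}(s_U(u))$). From this fibrewise description one reads off that $\supp(s_U^*b) = \{(u,v)\in U\circ V : v\in\supp(b)\} = U {}_{s_U}\!\times_{r_V} \supp(b)$; indeed $(s_U^*b)_u$ is supported in $\supp(b)\cap r_V^{-1}(s_U(u))$, which can only be nonempty when there is some $v\in\supp(b)$ with $r_V(v)=s_U(u)$. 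Next, $a*b = a\circ s_U^*b$ means we apply $a\in\cE'_r(U)$ in the $U$-variable of $U\circ V = U{}_s\!\times_r V$; concretely, for $\phi\in C^\infty(U\circ V)$, $(a*b,\phi)$ is obtained by first integrating $s_U^*b$ against $\phi$ in the $V$-direction to produce a function on $U$, then pairing with $a$. Hence if $\phi$ has support disjoint from $\supp(a)\circ\supp(b)$, I claim $(a*b,\phi) = 0$: the intermediate function $(s_U^*b)\cdot\phi$ (integration along the $V$-fibre) is supported, as a function on $U$, in $\mathrm{pr}_U\big(\supp(s_U^*b)\cap\supp(\phi)\big) \subseteq \mathrm{pr}_U\big((U{}_s\!\times_r\supp(b))\cap\supp(\phi)\big)$, and the $C^\infty(M)$-linearity / locality of $a$ (part~\eqref{part c} of Section~\ref{sec:fibdistrsubm}) kills it if this set misses $\supp(a)$.

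The only genuinely delicate point is making the last step precise: pushforward along a non-proper submersion does not in general map compactly-supported-in-fibre data to nicely supported data, so I must use that $\supp(b)$ is $r_V$-proper (guaranteed for any $b\in\cE'_r(V)$ by part~(d) of Section~\ref{sec:fibdistrsubm}) to ensure that $\supp(s_U^*b)$ is $\mathrm{pr}_U$-proper and that the fibrewise pairing with $\phi$ really does produce a function on $U$ with support contained in $\mathrm{pr}_U(\supp(s_U^*b)\cap\supp(\phi))$ — and that this image set is closed. I expect this bookkeeping about properness of supports under fibred products to be the main (though not deep) obstacle; everything else is a direct translation of definitions. An alternative, perhaps cleaner, route is to argue purely at the level of the fibre distributions: for $x\in M$, $(a*b)^x$ is a distribution on $r_{U\circ V}^{-1}(x) \subseteq U\circ V$ built by integrating the family $(b^{s_U(u)})$ over $u$ ranging in $r_U^{-1}(x)$ against $a^x$, so its support lies in $\bigcup_{u\in\supp(a^x)} \{u\}\times\supp(b^{s_U(u)}) \subseteq \supp(a)\circ\supp(b)$; taking the union over $x$ gives the claim. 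I would present the latter argument as the cleanest writeup, citing part~\eqref{part c} and Definition~\ref{def:pullback} for the fibrewise formulas and Definition~\ref{def:convolution} for the identification of $a*b$ with the iterated fibrewise operation.
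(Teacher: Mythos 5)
The paper states this lemma without proof, so there is no argument of record to compare against; your proposal supplies the routine verification that the authors evidently had in mind, and it is correct. Writing $a*b=a\circ s_U^*b$, observing $\supp(s_U^*b)\subseteq U\,{}_{s_U}\!\times_{r_V}\supp(b)$ from the fibrewise characterization of the pullback, and then invoking the locality of $a$ on the complement of $\supp(a)$ is exactly the right decomposition, and you correctly isolate the one point that genuinely needs an argument: the function $\psi=(s_U^*b,\phi)\in C^\infty(U)$ vanishes \emph{outside} $\pr_U\bigl(\supp(\phi)\cap(U\circ\supp(b))\bigr)$, so to conclude $\supp(\psi)\cap\supp(a)=\emptyset$ one needs that image to be closed, which follows because the $r_V$-properness of $\supp(b)$ makes $\pr_U$ restricted to $U\circ\supp(b)$ a proper (hence closed) map. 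Two tiny points of hygiene for the final writeup: only the inclusion $\supp(s_U^*b)\subseteq U\,{}_{s_U}\!\times_{r_V}\supp(b)$ is needed (the asserted equality is irrelevant and slightly harder), and the step that kills the pairing is the definition of $\supp(a)$ in part~(d) of \S\ref{sec:fibdistrsubm} (vanishing on the open complement, via a partition of unity) rather than part~\eqref{part c}; your alternative ``purely fibrewise'' version has the same closure issue hiding in $\supp(a*b)\subseteq\overline{\bigcup_x\supp((a*b)^x)}$, so the first writeup, with the properness argument made explicit, is the one to keep.
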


The convolution product is associative. It is also compatible with integration along fibres in the following sense.

\begin{lemma}
 \label{lem:convolution_integration_compatible}
 If $\pi:U\to U'$ and $\rho:V \to V'$ are submersive morphisms of bisubmersions then $\pi\times\rho:U\circ V \to U'\circ V'$ is a submersive morphism of bisubmersions and we have $(\pi_*a)*(\rho_*b) = (\pi\times\rho)_*(a*b)$ for all $a\in\cE'_r(U)$, $b\in\cE'_r(V)$ .
\end{lemma}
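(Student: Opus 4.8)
The plan is to first verify that $\pi\times\rho$ is a well-defined submersive morphism of bisubmersions, and then to prove the identity by splitting $\pi\times\rho$ into two pieces that each integrate out a single variable, so that each piece is handled by the functoriality statements of Lemma~\ref{lem:pullback-functoriality}.

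First I would check that $\Pi := \pi\times\rho$ sends $U\circ V = U{}_{s_U}\!\times_{r_V}V$ into $U'\circ V' = U'{}_{s_{U'}}\!\times_{r_{V'}}V'$ and is a morphism of bisubmersions: this is immediate from $r_{U'}\circ\pi = r_U$, $s_{U'}\circ\pi = s_U$ and the analogous identities for $\rho$. For submersivity, given $(u,v)\in U\circ V$ and a tangent vector $(\xi',\eta')$ to $U'\circ V'$ at $(\pi(u),\rho(v))$ (so $ds_{U'}(\xi') = dr_{V'}(\eta')$), I would lift $\xi'$ through the submersion $\pi$ and $\eta'$ through the submersion $\rho$; the compatibility condition $ds_U(\xi)=dr_V(\eta)$ for the lifted pair is then automatic, because $s_U = s_{U'}\circ\pi$ and $r_V = r_{V'}\circ\rho$, so $(\xi,\eta)\in T_{(u,v)}(U\circ V)$ and maps to $(\xi',\eta')$.

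For the identity itself, I would factor $\Pi = (\pi\times\id_{V'})\circ(\id_U\times\rho)$. Since pushforward is functorial — $(\psi\circ\chi)_* = \psi_*\circ\chi_*$, directly from $(\psi\circ\chi)^* = \chi^*\circ\psi^*$ on functions — it suffices to prove the two special cases
\[
 (\id_U\times\rho)_*(a*b) = a*(\rho_*b), \qquad (\pi\times\id_{V'})_*(a*c) = (\pi_*a)*c
\]
(for $c\in\cE'_r(V')$) and then apply the first to $a,b$ and the second to $c=\rho_*b$. For the first, recall $a*b = a\circ s_U^*b$ and $a*(\rho_*b) = a\circ s_U^*(\rho_*b)$; Lemma~\ref{lem:pullback-functoriality}(a), applied to the map $s_U\colon U\to M$ and the morphism $\rho$, gives $s_U^*(\rho_*b) = (\id_U\times\rho)_*(s_U^*b)$, and since the pushforward $(\id_U\times\rho)_*$ of fibred distributions over $U$ is just precomposition with the pullback of functions, it commutes with postcomposition by the linear map $a$, which yields $a\circ(\id_U\times\rho)_*(s_U^*b) = (\id_U\times\rho)_*(a\circ s_U^*b)$. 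For the second, $s_U = s_{U'}\circ\pi$ together with Lemma~\ref{lem:pullback-functoriality}(b) gives $s_U^*c = \pi^*(s_{U'}^*c)$; under the canonical identification of fibre products, the projection appearing in the pullback diagram of Definition~\ref{def:pullback} for $\pi^*(s_{U'}^*c)$ is precisely $\pi\times\id_{V'}\colon U\circ V'\to U'\circ V'$, so the characterizing property \eqref{eq:pullback_distribution} unwinds, after composing with $a$ and pushing forward, to $(\pi\times\id_{V'})_*\big(a\circ\pi^*(s_{U'}^*c)\big) = (\pi_*a)\circ(s_{U'}^*c) = (\pi_*a)*c$.

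The only delicate point is bookkeeping: one must match the composition fibre product $U\circ V$ of bisubmersions with the pullback diagrams of Definitions~\ref{dfn:pushforward} and~\ref{def:pullback}, identifying in particular the relevant projection maps with $\id_U\times\rho$ and $\pi\times\id_{V'}$, so that Lemma~\ref{lem:pullback-functoriality} and \eqref{eq:pullback_distribution} apply verbatim; once these identifications are pinned down, each step is a short formal manipulation of compositions of continuous linear maps. As an alternative to the modular argument above, one can prove the identity by hand: pairing both sides against $\phi\in C^\infty(U'\circ V')$ reduces, via the definition of pushforward and of the composition product, to an equality of elements of $C^\infty(U)$, and this is checked pointwise on $U$ using part~\ref{part c} of the definition of fibred distributions together with the fibrewise descriptions $(s_U^*b)_u = b_{s_U(u)}$ and $(\rho_*b)_y = (\rho^y)_*(b_y)$ for the restriction $\rho^y$ of $\rho$ to $r_V$-fibres.
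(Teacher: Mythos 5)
Your proof is correct and follows essentially the same route as the paper: the paper's own argument is a single chain of pairings against a test function whose first two lines are exactly your $(\id_U\times\rho)$-step (via Lemma~\ref{lem:pullback-functoriality}(a)) and whose remaining lines are exactly your $(\pi\times\id_{V'})$-step (via $s_U^*=\pi^*\circ s_{U'}^*$ and Equation~\eqref{eq:pullback_distribution}), so your factorization is only a cosmetic reorganization. You additionally verify that $\pi\times\rho$ is a submersive morphism, which the paper asserts without proof.
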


\begin{proof}

Let $\phi\in C^\infty(U\srtimes V)$.  Using the functorial properties of Lemma \ref{lem:pullback-functoriality} and Equation \eqref{eq:pullback_distribution}, we have
 \begin{align*}
  ((\pi_*a) * (\rho_*b), \phi)
   &= (\pi_*a,(s_{U'}^*(\rho_*b),\phi)) \\
   &= (\pi_*a, ((\id\times\rho)_*(s_U^*b), \phi)) \\
   &= (a, \pi^* (s_U^*b, (\id\times\rho)^*\phi)) \\
   &= (a, (s_U^*b, (\pi\times\id)^*(\id\times\rho)^*\phi)) \\
   &= ((\pi\times\rho)_*(a*b), \phi)\qedhere
 \end{align*}
\end{proof}

\begin{lemma}
 \label{lem:convtransp}
 For any $a\in\cE'_r(U)$ and $b\in\cE'_r(V)$ we have  
 \[
   (a*b)^\rmt = b^\rmt * a^\rmt
 \]
 as elements of $\cE'_s(U\circ V)$.
\end{lemma}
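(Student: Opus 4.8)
The plan is to unwind both sides of the identity $(a*b)^\rmt = b^\rmt * a^\rmt$ using the definitions of convolution and transpose, and to verify the equality by pairing against a test function on the underlying space of the bisubmersion. Recall that $U\circ V = U\,{}_{s_U}\!\times_{r_V} V$ carries the range and source maps $r_{U\circ V}(u,v) = r_U(u)$, $s_{U\circ V}(u,v) = s_V(v)$, and that the inverse bisubmersion $(U\circ V)^\rmt$ has the same underlying space but with range and source swapped. On the other side, $b^\rmt \in \cE'_r(V^\rmt)$ and $a^\rmt \in \cE'_r(U^\rmt)$, so $b^\rmt * a^\rmt \in \cE'_r(V^\rmt \circ U^\rmt)$, and $V^\rmt \circ U^\rmt = V\,{}_{r_V}\!\times_{s_U} U$ with range map $(v,u)\mapsto s_V(v)$ and source map $(v,u)\mapsto r_U(u)$. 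The first step is therefore to exhibit the tautological diffeomorphism $\theta : (U\circ V)^\rmt \to V^\rmt\circ U^\rmt$, $(u,v)\mapsto (v,u)$, check that it intertwines the range and source maps (so it is an isomorphism of bisubmersions), and reduce the claim to showing $\theta_*\big((a*b)^\rmt\big) = b^\rmt * a^\rmt$, or equivalently that $(a*b)^\rmt$ and $b^\rmt*a^\rmt$ agree after the obvious identification of their supporting spaces.

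Next I would compute each side fibrewise, which is the cleanest route given part (\ref{part c}) of the definitions and the explicit fibrewise descriptions of pullback (Definition \ref{def:pullback}) and pushforward. Writing $a*b = a\circ s_U^*b$, its $r$-fibre over $x\in M$ is a distribution on $r_{U\circ V}^{-1}(x) = \{(u,v) : r_U(u)=x,\ s_U(u)=r_V(v)\}$, obtained by integrating the family $u\mapsto b^{s_U(u)}$ against $a^x$ in the $u$-variable; transposing just reinterprets this same distribution as living on the $s$-fibre $s_{(U\circ V)^\rmt}^{-1}(x) = s_{U\circ V}^{-1}(x)$ — wait, more precisely over the point $s_V(v)$. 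Symmetrically, $(b^\rmt * a^\rmt)$ as an $s$-fibred distribution has, over a point $y\in M$, the distribution on $\{(v,u): s_V(v)=y,\ r_V(v)=s_U(u)\}$ built by integrating $u\mapsto (a^\rmt)_{\bullet}$ against $(b^\rmt)_y$. Matching these two descriptions under $\theta$ is then a direct check that the same iterated pairing against $\phi\circ\theta$ results. Rather than juggling fibrewise formulas, though, the slicker execution is to pair against an arbitrary $\phi \in C^\infty(U\circ V)$ and repeatedly apply the defining relation \eqref{eq:pullback_distribution}, the definition of composition $a\circ(\cdot)$, and the fact that transposition is by definition the identity on the underlying linear map; the associativity of convolution (already asserted in the excerpt) and Lemma \ref{lem:pullback-functoriality} are the tools that let one move the $s_U^*$ and $r_V^*$ pullbacks past one another.

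Concretely, I would argue: for $\phi\in C^\infty(U\circ V)$,
\[
 ((a*b)^\rmt, \phi) = ((a*b), \phi) = (a, (s_U^*b, \phi)) = (a, \pr_U\text{-pushforward of } s_U^*b \text{ paired with } \phi),
\]
and on the other side expand $(b^\rmt * a^\rmt, \phi\circ\theta) = (b^\rmt, (r_{U^\rmt}^* a^\rmt, \phi\circ\theta)) = (b, (s_V^*\!\text{-type pullback of } a, \cdots))$, and then observe that both reduce to the same double integral: first integrate out the $U$-variable using $a$ (resp.\ $a^\rmt$, which is the same functional), holding the $V$-variable fixed, then integrate the $V$-variable using $b$ (resp.\ $b^\rmt$). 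The key point making the two orders of integration match is Fubini-type commutation, which here is legitimate because one of the two distributions is, in each slice, being paired against a genuinely smooth function (the pushforward/pullback constructions are set up precisely so that $(s_U^*b,\phi) \in C^\infty(U)$ before $a$ is applied). I expect the main obstacle to be purely bookkeeping: keeping straight which of the four spaces $U\circ V$, $(U\circ V)^\rmt$, $V^\rmt\circ U^\rmt$, $U^\rmt$, $V^\rmt$ each intermediate object lives on, and verifying that the canonical flip $\theta$ really is a morphism of bisubmersions (not merely a diffeomorphism) so that $\theta_*$ is available and behaves functorially via Lemma \ref{lem:pullback-functoriality}; once the identifications are pinned down, the computation is a short chain of applications of \eqref{eq:pullback_distribution} with no analytic subtlety.
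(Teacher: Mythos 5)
Your first step --- introducing the canonical flip $\theta:(U\circ V)^\rmt\to V^\rmt\circ U^\rmt$, $(u,v)\mapsto(v,u)$, checking it is an isomorphism of bisubmersions, and reducing the lemma to an identity of linear maps $C^\infty(U\circ V)\to C^\infty(M)$ --- is exactly the implicit content of the paper's one-line proof. But the second half of your argument goes astray precisely at the point you identify as the crux. By Definition \ref{def:convolution}, convolution of $s$-fibred distributions is $\alpha*\beta:=\beta\circ r^*\alpha$, with the \emph{second} factor applied outermost; hence
\[
 b^\rmt*a^\rmt \;=\; a^\rmt\circ r_{U^\rmt}^*(b^\rmt)\;=\;a\circ s_U^*b,
\]
since $a^\rmt=a$ and $b^\rmt=b$ as linear maps and $r_{U^\rmt}=s_U$. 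This is \emph{literally} the same composition as $a*b=a\circ s_U^*b$: in both expressions one first integrates over the $V$-variable (producing the smooth function $(s_U^*b,\phi)$ on $U$) and then applies $a$. No interchange of the order of integration occurs and no Fubini-type argument is needed; the asymmetric definition of the $s$-convolution was chosen precisely so that transposition is anti-multiplicative for purely formal reasons. The only remaining check is that the source maps of $(U\circ V)^\rmt$ and $V^\rmt\circ U^\rmt$ agree under $\theta$ (both send a point to $r_U(u)$), which is your step one.

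Your expansion $(b^\rmt*a^\rmt,\phi\circ\theta)=(b^\rmt,(r_{U^\rmt}^*a^\rmt,\phi\circ\theta))$ has the two factors in the wrong roles relative to Definition \ref{def:convolution}, and it does not typecheck: $a^\rmt$ is fibred over $s_{U^\rmt}=r_U$, and pulling it back along $r_{V^\rmt}=s_V$ would produce a fibred distribution on $V\,{}_{s_V}\!\times_{r_U}U$, which is not the underlying space $V\,{}_{r_V}\!\times_{s_U}U$ of $V^\rmt\circ U^\rmt$. If you insist on writing the right-hand side with $b$ applied outermost, you are in effect trying to prove an identity of the shape $a\circ s_U^*b=b\circ r_V^*\tilde a$, i.e.\ a genuine commutation of the two partial integrations; that statement only makes sense when one factor is $r$-fibred and the other $s$-fibred, and it is the separate, nontrivial content of Lemma \ref{lem:proper_convolution}, not of the present lemma. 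So: keep your step one, delete the Fubini step, and simply observe that both sides are the identical linear map $a\circ s_U^*b$ read against two canonically isomorphic bisubmersion structures.
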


\begin{proof}
 We calculate $(a*b)^\rmt = (a\circ s_U^*b)^\rmt = a^\rmt \circ r_U^*(b^\rmt) = b^\rmt * a^\rmt$.
\end{proof}

\section{The convolution algebra of fibred distributions on the holonomy groupoid}
\label{sec:ErF}

In this section, we will define the convolution algebra $\cE_r'(\cF)$ of properly supported $r$-fibred distributions on the holonomy groupoid of a singular foliation.  In \S \ref{sec:Op}, we will show that $\cE'_r(\cF)$ acts by continuous linear operators on the spaces $C^\infty(M)$, via a representation which we call $\Op$.  These are what we refer to, informally, as the `Schwartz kernel operators' associated to a singular foliation.

There is likewise a convolution algebra $\cE_s'(\cF)$ of properly supported $s$-fibred distributions.  This algebra admits a representation as operators on the distribution space $\cE'(M)$.  For an algebra which acts at once on all four spaces $C^\infty(M)$, $\Cc^\infty(M)$, $\cD'(M)$, $\cE'(M)$, we will need to add further conditions, which we deal with in \S \ref{sec:proper}.

\subsection{The convolution algebra of $r$-fibred distributions}
Let $(M,\cF)$ be a foliation and $\cU$ a maximal atlas of bisubmersions. 

\begin{definition}
The space $\cE'_r(\cU)$ denotes the vector space of all families $(a_U)_{U\in \cU}$ that are \emph{$r$-locally finite}. This means that for every compact $K \subseteq M$ there are only finitely many $U\in \cU$ with 
 \[
  r(\supp(a_U)) \cap K \neq \emptyset.
 \]
\end{definition}

Once again, strictly speaking, this definition has some set-theoretic problems, which can be resolved as in Remark \ref{rmk:set-theory}.

If $U_0\in \cU$, then we will naturally identify $\cE'_r(U_0)$ with the subspace of $\cE'_r(\cU)$ consisting of $(a_U)_{U\in\cU}$ such that $a_U=0$ if $U\neq U_0$.
 
We will also represent an element $\bfa=(a_U)_{U\in\cU}\in \cE'_r(\cU)$ as a sum $\bfa=\sum_{U}a_U$.

\begin{prop} The convolution defined by $\sum a_U \circ \sum b_U=\sum_{U,V} a_U\ast b_V$ is well defined. 
\end{prop}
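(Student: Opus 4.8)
The plan is to check that the doubly-indexed sum $\sum_{U,V} a_U \ast b_V$, reindexed by the composition bisubmersions $U\circ V$, again defines an $r$-locally finite family, so that it lies in $\cE'_r(\cU)$. There are two things to verify: first, that for each fixed pair $(U,V)$ the convolution $a_U \ast b_V$ is a well-defined element of $\cE'_r(U\circ V)$, which is immediate from Definition \ref{def:convolution} once we note that $U\circ V$ is again a bisubmersion in the maximal atlas $\cU$ (being adapted to $\cU$, it belongs to the maximal atlas); and second, the finiteness condition.

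For the finiteness, fix a compact set $K \subseteq M$. By Lemma \ref{lem:disjoint_support}, we have $\supp(a_U \ast b_V) \subseteq \supp(a_U)\circ\supp(b_V)$, and $r_{U\circ V}(u,v) = r_U(u)$, so $r(\supp(a_U\ast b_V)) \subseteq r(\supp(a_U))$. Hence the only pairs $(U,V)$ that can contribute a term whose support meets $r^{-1}(K)$ are those with $r(\supp(a_U))\cap K \neq \emptyset$. By $r$-local finiteness of $\bfa$, there are only finitely many such $U$, say $U_1,\dots,U_p$. For each such $U_i$, a term $a_{U_i}\ast b_V$ is nonzero only if $s(\supp(a_{U_i}))\cap r(\supp(b_V)) \neq \emptyset$ again by Lemma \ref{lem:disjoint_support}; since $\supp(a_{U_i})$ is $r_{U_i}$-proper, the set $K_i := s(\supp(a_{U_i}) \cap r_{U_i}^{-1}(K))$ is compact in $M$, and then $r(\supp(b_V))$ must meet $K_i$. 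By $r$-local finiteness of $\bfb$ applied to the compact set $K_1\cup\dots\cup K_p$, only finitely many $V$ survive. Thus only finitely many pairs $(U,V)$ yield a term contributing to $r^{-1}(K)$.

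It remains to observe that the grouping of terms is legitimate: for a fixed compact $K$ only finitely many summands $a_U\ast b_V$ have support meeting $r^{-1}(K)$, so locally over $K$ the family $(\sum_{U,V:\,U\circ V = W} a_U\ast b_V)_{W\in\cU}$ is a finite sum of elements of $\cE'_r(\cU)$, hence well-defined, and the resulting family is $r$-locally finite by construction. Therefore $\sum a_U \circ \sum b_V := \sum_{U,V} a_U\ast b_V$ is a well-defined element of $\cE'_r(\cU)$.

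I expect the only real subtlety to be the bookkeeping in the second paragraph — specifically, using properness of $\supp(a_U)$ to convert ``the terms indexed by the finitely many relevant $U$'' into a \emph{single} compact set against which the $r$-local finiteness of $\bfb$ can be invoked. Everything else (that $U\circ V$ lies in the maximal atlas, that each $a_U\ast b_V$ is defined, that finite sums cause no trouble) is routine and follows directly from the material recalled above, in particular Lemma \ref{lem:disjoint_support} and the closure properties of a maximal atlas.
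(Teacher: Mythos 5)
Your proof is correct and follows essentially the same route as the paper: reduce first to the finitely many $U$ with $r(\supp(a_U))\cap K\neq\emptyset$, then use $r$-properness of $\supp(a_U)$ to produce a compact set in $M$ against which the $r$-local finiteness of $\bfb$ is invoked. Your write-up is in fact slightly more careful than the paper's (e.g.\ isolating the compact sets $K_i$ and noting the legitimacy of regrouping terms by $U\circ V$), but the underlying argument is identical.
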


\begin{proof} 
	By this we mean that $\sum_{U,V} a_U\ast b_V$ is $r$-locally finite. Let $K\subseteq M$ be compact, and denote by $U_j$ the bisubmersions such that $r(\supp(a_{U_j}))\cap K\neq \emptyset$. Since $b$ is $r$-locally finite and $\supp(a_{U_j})\cap r^{-1}(K)$ is compact, there exists a finite number of $V_i$ such that there exists $j$ with $r(\supp{a_{U_j}})\cap s(\supp(b_{V_i})\cap r^{-1}(K))\neq \emptyset$.  The set of bisubmersions $U_j\circ V_i$ includes all those $U\circ V$ for which $r(\supp(a_U\ast b_V))\cap K \neq\emptyset$.  The result follows.
\end{proof}

We equip $\cE'_r(\cU)$ with the following topology. A generalized sequence $\bfa_i = (a_{i,U})$ converges to $\bfa = (a_U)$ if the families $\bfa_i$ are uniformly $r$-locally finite---meaning that for every compact $K\subseteq M$ there are only finitely many $U \in \cU$ for which $r(\supp(a_{i,U})) \cap K \neq \emptyset$ for some $i\in I$---and $a_{i,U} \to a_U$ for every $U$.

Now let us focus on the path-holonomy atlas $\cU_{hol}$.

\begin{definition}
 \label{def:ErF}
 We define $\cN_r \subseteq \cE'_r(\cU_{\hol})$ to be the closure of the ideal generated by all elements of the form $a - \pi_*a$, where $a\in\cE'_r(U)$ for some bisubmersion $U\in\cU_\hol$ and $\pi:U \to V$ is a morphism of bisubmersions. For elements $a\in \cE'_r(U)$, $b\in \cE'_r(V)$ we will write $a \equiv b$ when $a-b \in \cN_r$. We define
\[
\cE'_r(\cF) = \cE'_r(\cU_\hol) / \cN_r.
\]
We define spaces $\cN_s$ and $\cE'_s(\cF) = \cE'_s(\cU_\hol) / \cN_s$ analogously.
\end{definition}

\begin{remark}
	It is necessary to take a closure in the definition of the ideal $\cN_r$ in order to allow the equivalences in $\cE'_r(\cU_{hol})$ to take place on an infinite (but $r$-locally finite) family of bisubmersions.
\end{remark}

The point of quotienting by the ideal $\cN_r$ is that, as we will see in Section \ref{sec:Op}, the $r$-fibred distributions $a$ and $\pi_*a$ will induce the same kernel operators on $C^\infty(M)$.  For a simple example, if $\iota:U' \to U$ is the inclusion of an open set of a bisubmersion, then every kernel $a\in \cE'_r(U')$ is identified in the quotient with its extension by zero $\iota_*a\in \cE'_r(U)$.

\begin{remark}
	\label{rmk:PsiDOs}
The equivalence relation generated by declaring $a$ equivalent to its pushforward $\pi_*a$, is essentially the same as the equivalence relation on pseudodifferential kernels in \cite{AS2}.  The differences are purely aesthetic (use of half densities, and distributions in place of fibred distributions on bisubmersions).  In view of Example \ref{ex:PsiDOs} it follows that, up to aesthetic modifications, the kernels of pseudodifferential type defined in \cite{AS2} also define elements of the algebras $\cE'_r(\cF)$ and $\cE'_r(\cF)$.
\end{remark}

\begin{prop}
 \label{prop:convolution_algebra}
\begin{enumerate}
\item The convolution product on $\cE'_r(\cU_{hol})$ descends to a seperately continuous associative product on the quotient space $\cE'_r(\cF)$.  Similarly for $\cE'_s(\cF)$.  
\item 
\begin{sloppypar}
Transposition descends to a bijective anti-algebra isomorphism $\cE'_r(\cF) \to \cE'_s(\cF)$.
\end{sloppypar}
\end{enumerate}
\end{prop}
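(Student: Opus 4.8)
The plan is to treat the two parts separately, both essentially as formal consequences of the separate continuity statements and the functoriality lemmas already in place.

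\emph{Part (1).} First I would observe that the product on $\cE'_r(\cU_{\hol})$ is separately continuous for the topology described before Definition~\ref{def:ErF}: this is immediate from the definition of convergence of nets there together with the separate continuity of $*$ on each $\cE'_r(U\circ V)$. Next, by construction $\cN_r$ is the \emph{closure} of the two-sided ideal $I_r$ generated by the elements $a-\pi_*a$. Since convolution is separately continuous, for fixed $\bfb$ the map of left (resp. right) convolution by $\bfb$ is continuous, so $\bfb*\overline{I_r}\subseteq\overline{\bfb*I_r}\subseteq\overline{I_r}=\cN_r$ and likewise $\cN_r*\bfb\subseteq\cN_r$; hence $\cN_r$ is itself a (closed) two-sided ideal. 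Consequently, if $\bfa\equiv\bfa'$ and $\bfb\equiv\bfb'$ then $\bfa*\bfb-\bfa'*\bfb'=(\bfa-\bfa')*\bfb+\bfa'*(\bfb-\bfb')\in\cN_r$, so convolution passes to a well-defined bilinear map on $\cE'_r(\cF)=\cE'_r(\cU_{\hol})/\cN_r$. Associativity is an algebraic identity and survives the quotient; separate continuity survives because the quotient map is continuous and open for the quotient topology. The case of $\cE'_s(\cF)$ is word for word the same.

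\emph{Part (2).} Transposition $a\mapsto a^\rmt$ is a linear homeomorphism interchanging $\cE'_r(U)$ and $\cE'_s(U^\rmt)$, and $U\mapsto U^\rmt$ is an involution of the maximal atlas $\cU_{\hol}$ (which is closed under inverses), so transposition extends termwise to a linear homeomorphism $\cE'_r(\cU_{\hol})\to\cE'_s(\cU_{\hol})$ taking $r$-local finiteness to $s$-local finiteness. To see it carries $\cN_r$ onto $\cN_s$: given a morphism of bisubmersions $\pi:U\to V$, the same underlying map is a morphism $U^\rmt\to V^\rmt$, and since pushforward is defined by $(\pi_*a,\phi)=(a,\pi^*\phi)$ — a formula insensitive to which fibration is called source or range — one has $(\pi_*a)^\rmt=\pi_*(a^\rmt)$. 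Hence $(a-\pi_*a)^\rmt=a^\rmt-\pi_*(a^\rmt)$ is a generator of $\cN_s$, and the generators of $\cN_s$ are obtained this way. By Lemma~\ref{lem:convtransp} transposition reverses convolution, so as an anti-isomorphism $(\cE'_r(\cU_{\hol}),*)\to(\cE'_s(\cU_{\hol}),*)$ it maps the two-sided ideal $I_r$ onto the two-sided ideal $I_s$; being a homeomorphism it maps $\cN_r=\overline{I_r}$ onto $\overline{I_s}=\cN_s$. Therefore it descends to a linear bijection $\cE'_r(\cF)\to\cE'_s(\cF)$, which by Lemma~\ref{lem:convtransp} (extended to the $r$-locally finite sums by linearity and separate continuity) is an anti-homomorphism; since it is an involution, it is an anti-isomorphism, with continuous inverse.

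The genuinely delicate point, and the one I would spend the most care on, is the interaction between taking closures and the fact that convolution is only \emph{separately}, not jointly, continuous: this is precisely what obliges us to verify by hand that the closure $\cN_r$ of the algebraic ideal is still a two-sided ideal (and, dually, that transposition maps one closed ideal onto the other), and it is the reason the closure is built into the definition of $\cN_r$ in the first place, as flagged in the remark following Definition~\ref{def:ErF}. The remaining work — the bookkeeping with the index sets of the families $(a_U)_{U\in\cU_{\hol}}$ and the passage from the elementary convolutions $a_U*b_V$ to the $r$-locally finite sums $\sum_{U,V}a_U*b_V$ — is routine.
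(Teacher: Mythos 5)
Your proof is correct and follows essentially the same route as the paper: one checks that $\cN_r$ is a closed two-sided ideal (the paper does this by invoking Lemma~\ref{lem:convolution_integration_compatible}, you by applying separate continuity of convolution to the closure of the algebraically generated ideal --- the two verifications amount to the same thing) and that transposition exchanges $\cN_r$ and $\cN_s$, after which Lemma~\ref{lem:convtransp} yields the anti-isomorphism. I see no gaps.
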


\begin{proof}
\begin{enumerate}
 \item 
 It follows from Lemma \ref{lem:convolution_integration_compatible} that $\cN_r$ is a closed two-sided ideal in $\cE'_r(\cU_{hol})$, which proves the first statement.  The statement for $\cE'_s(\cF)$ is proven similarly.  

 \item
 It is clear that transposition defines a continuous linear isomorphism from $\cE'_r(\cU_{hol})$ to   $\cE'_s(\cU_{hol})$ and that $\cN_r$ maps to $\cN_s$.  The result then follows from Lemma \ref{lem:convtransp}.  
\end{enumerate}
\end{proof}

\begin{exs}
\begin{enumerate}
\item 
Let us discuss the case when the foliation $(M,\cF)$ is defined by a Lie groupoid $\cG \gpd M$ as in Example \ref{exs:regatlas}. By abuse of notation, we denote $\cG$ the maximal atlas generated by $\cG$. It is then obvious that the algebra $\cE'_r(\cG)$ we construct here coincides with one constructed in \cite{LMV}.

\item 
When $\cU$ and $\cU'$ are equivalent atlases, it is routine to check that the algebras $\cE'_r(\cU)$ and $\cE'_r(\cU')$ are isomorphic. As was mentioned in Examples \ref{exs:regatlas}, when the foliation $(M,\cF)$ is almost regular (or just regular), the maximal atlas $H(\cF)$, which is equivalent to the path-holonomy atlas. Therefore, in this case our algebra $\cE_r(\cF)$ is again isomorphic to the convolution algebra constructed in \cite{LMV}.
\end{enumerate}
\end{exs}

\section{Action on smooth functions}
\label{sec:Op}

The most important feature of the convolution algebra $\cE'_r(\cF)$ is that it acts by continuous linear operators on $C^\infty(M)$, as well as $C^{\infty}$ functions on the leaves of $(M,\cF)$ and their holonomy covers. We start by explaining the action on $C^\infty(M)$.  We will treat the action on leaves and their holonomy covers in Section \ref{sec:actleaf}.

From this point on, all bisubmersions will be considered in the maximal path holonomy atlas $\cU_{hol}$.

\begin{prop}
	\label{prop:Op}
	Let $U\rightrightarrows M$ be a bisubmersion and $a\in \cE'_r(U)$. 
	\begin{enumerate}
		\item The formula
		\[
		\Op(a)f = (a, s_U^*f)
		\]
		defines a continuous linear operator $\Op(a)$ on $C^\infty(M)$.  
		
		\item If $b\in\cE'_r(V)$ for another bisubmersion $V\rightrightarrows M$, we have
		\[
		\Op(a)\Op(b) = \Op(a*b).
		\]
		
		\item If $\pi:U \to U'$ is a morphism of bisubmersions then $\Op(a) = \Op(\pi_*(a))$.
		
	\end{enumerate}
\end{prop}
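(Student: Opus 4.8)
The plan is to verify each of the three claims in turn, with the main work lying in parts (1) and (2).

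For part (1), I would first observe that $\Op(a)$ is well defined: given $f \in C^\infty(M)$, the pullback $s_U^*f$ lies in $C^\infty(U)$, so $(a, s_U^*f)$ is an element of $C^\infty(M)$ because $a$ is a properly supported $r$-fibred distribution, i.e.\ a continuous $C^\infty(M)$-linear map $C^\infty(U) \to C^\infty(M)$ (here $C^\infty(U)$ carries the $C^\infty(M)$-module structure via $r_U$). Linearity in $f$ is immediate. Continuity follows from composing two continuous maps: $f \mapsto s_U^*f$ is continuous $C^\infty(M) \to C^\infty(U)$ for the topologies of uniform convergence of derivatives on compacta (pullback by a smooth map is continuous), and $a : C^\infty(U) \to C^\infty(M)$ is continuous by definition of $\cE'_r(U)$. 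Hence $\Op(a) = a \circ s_U^*$ is continuous on $C^\infty(M)$.

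For part (2), let $f \in C^\infty(M)$ and compute $\Op(a)\Op(b)f = \Op(a)(b, s_V^*f) = (a, s_U^*(b,s_V^*f))$. On the other hand, by Definition \ref{def:convolution}, $a*b = a \circ s_U^*b \in \cE'_r(U\circ V)$, so $\Op(a*b)f = (a*b, s_{U\circ V}^*f) = (a\circ s_U^*b, s_{U\circ V}^*f)$. The point is to identify these. Recall $s_{U\circ V} = s_V \circ \pr_V$, so $s_{U\circ V}^*f = \pr_V^*(s_V^*f)$ on $U\circ V = U{}_{s_U}\times_{r_V} V$. Now $s_U^*b$ is the pullback of $b \in \cE'_r(V)$ along the base map $s_U : U \to M$ in the sense of Definition \ref{def:pullback}, an element of $\cE'_{\pr_U}(U\circ V)$; its defining property \eqref{eq:pullback_distribution} reads $(s_U^*b, \pr_V^*\psi) = s_U^*(b,\psi)$ for $\psi \in C^\infty(V)$. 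Applying this with $\psi = s_V^*f$ gives $(s_U^*b, s_{U\circ V}^*f) = s_U^*(b, s_V^*f) = s_U^*(\Op(b)f)$ as a function on $U$. Feeding this into the outer pairing with $a$ (which is the composition over $\pr_U : U\circ V \to U$, so $(a\circ s_U^*b, \phi) = (a, (s_U^*b,\phi))$ in the appropriate fibrewise sense) yields $(a*b, s_{U\circ V}^*f) = (a, s_U^*(\Op(b)f)) = \Op(a)(\Op(b)f)$, as desired. The one point requiring care here is keeping straight which fibration each distribution is transverse to and checking that the composition $a \circ s_U^*b$ is exactly the operation that, when paired against $\pr_V^*(s_V^*f)$, produces $(a, s_U^*(b, s_V^*f))$; this is essentially the associativity-type bookkeeping already implicit in the well-definedness of convolution, so it should go through cleanly.

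For part (3), let $\pi : U \to U'$ be a morphism of bisubmersions, so $s_{U'}\circ\pi = s_U$ and $r_{U'}\circ\pi = r_U$. By Definition \ref{dfn:pushforward}, $(\pi_*a, \phi) = (a, \pi^*\phi)$ for $\phi \in C^\infty(U')$. Then for $f \in C^\infty(M)$, $\Op(\pi_*a)f = (\pi_*a, s_{U'}^*f) = (a, \pi^*(s_{U'}^*f)) = (a, (s_{U'}\circ\pi)^*f) = (a, s_U^*f) = \Op(a)f$. This is a short computation and presents no obstacle. Combined with parts (1)--(2) and Lemma \ref{lem:convolution_integration_compatible}, this shows in particular that $\Op$ descends to a well-defined algebra representation of $\cE'_r(\cF)$ on $C^\infty(M)$, once one extends $\Op$ additively to $r$-locally finite families, which converges because for fixed $f$ only finitely many terms contribute on any compact set. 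The main obstacle in the whole argument is simply the fibred-distribution bookkeeping in part (2); everything else is a direct unwinding of definitions.
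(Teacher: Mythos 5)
Your proposal is correct and follows essentially the same route as the paper: part (1) as the composition of the continuous maps $s_U^*$ and $a$, part (2) via the identity $s_{U\circ V}^*f = \pr_V^*(s_V^*f)$ together with the defining property \eqref{eq:pullback_distribution} of the pullback $s_U^*b$, and part (3) by the direct computation $(\pi_*a, s_{U'}^*f) = (a, s_U^*f)$. No gaps.
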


\begin{proof}
 The linear maps $s_U^*:C^\infty(M) \to C^\infty(U)$ and $a:C^\infty(U) \to C^\infty(M)$ are continuous, which proves a).  
 
 The statement in b) follows from the calculation
 \begin{align*}
 \Op(a*b)f &= (a \circ s_U^*b, s_{U\circ V}^*f) \\
 &= (a, (s_U^*b, \pr_V^*s_V^*f)) \\
 &= (a, s_U^*(b,s_V^*f))  && \text{ (by Eq.\ \eqref{eq:pullback_distribution})} \\
 &= \Op(a)\Op(b)f.
 \end{align*}
 Finally, by the definition of a morphism of bisubmersions we have
 \[
 \Op(\pi_*(a))(f) 
 = (a, \pi^*s_{U'}^*f) = (a, s_U^*f) = \Op(a)(f),
 \]
 which proves c).
\end{proof}

If $\bfa  = \sum_U a_U \in \cE'_r(\cU_{hol})$, then for any $f\in C^\infty(M)$ we define
\[
\Op(\bfa) f = \sum_{U\in\cU_\hol} (a_U, s_U^*f).
\]
The sum is well-defined by $r$-local finiteness.
As an immediate consequence of Proposition \ref{prop:Op}, we have the main theorem of this section.

\begin{thm}
  The map $\Op : \cE'_r(\cU_{hol}) \to \bL(C^\infty(M))$ descends to a continuous representation of $\cE'_r(\cF)$ on $C^\infty(M)$.  \qed
\end{thm}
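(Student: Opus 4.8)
The plan is to combine Proposition \ref{prop:Op} with the definition of $\cE'_r(\cF)$ as a quotient of $\cE'_r(\cU_{hol})$ by the closed ideal $\cN_r$, and to check continuity. There are three things to verify: (i) $\Op$ is an algebra homomorphism on $\cE'_r(\cU_{hol})$; (ii) $\Op$ vanishes on $\cN_r$, hence descends to the quotient; (iii) the descended map is continuous for the topologies in play.

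For (i), parts (a) and (b) of Proposition \ref{prop:Op} give, for single bisubmersions, that $\Op(a)$ is a continuous linear operator on $C^\infty(M)$ and that $\Op(a*b) = \Op(a)\Op(b)$. For a general $\bfa = \sum_U a_U$, the operator $\Op(\bfa)f = \sum_U (a_U, s_U^*f)$ is well-defined by $r$-local finiteness (for $f$ with support in a compact $K$, only finitely many terms contribute near any point, and the sum is locally finite in $M$), and it is again a continuous linear operator on $C^\infty(M)$. Multiplicativity $\Op(\bfa \circ \bfb) = \Op(\bfa)\Op(\bfb)$ follows termwise from Proposition \ref{prop:Op}(b) together with the $r$-local finiteness that makes $\bfa \circ \bfb = \sum_{U,V} a_U * b_V$ a legitimate element of $\cE'_r(\cU_{hol})$; one rearranges the (locally finite) double sum.

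For (ii), Proposition \ref{prop:Op}(c) says $\Op(a) = \Op(\pi_* a)$ for any morphism of bisubmersions $\pi$, so $\Op$ annihilates every generator $a - \pi_* a$ of the ideal. Since $\Op$ is linear and multiplicative on $\cE'_r(\cU_{hol})$, it annihilates the (algebraic) ideal generated by these generators. To conclude it annihilates $\cN_r$, the \emph{closure} of that ideal, I use continuity of $\Op$: if $\Op$ is continuous from $\cE'_r(\cU_{hol})$ into $\bL(C^\infty(M))$ (with the topology of uniform convergence on bounded sets, say), then the kernel of $\Op$ is closed and hence contains $\cN_r$. This is where the continuity statement does double duty, so it must be established first. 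Concretely: if $\bfa_i \to \bfa$ in $\cE'_r(\cU_{hol})$, then by definition the $\bfa_i$ are uniformly $r$-locally finite and $a_{i,U} \to a_U$ for each $U$; for fixed $f \in C^\infty(M)$ with support in a compact set, only finitely many $U$ (uniformly in $i$) contribute, and on each such $U$ the pairing $a_{i,U}(s_U^* f) \to a_U(s_U^* f)$ in $C^\infty(M)$ because pairing is continuous; summing the finitely many terms gives $\Op(\bfa_i)f \to \Op(\bfa)f$, with the requisite uniformity over $f$ ranging in a bounded set. Hence $\Op$ is continuous, its kernel is closed, $\cN_r \subseteq \ker\Op$, and $\Op$ descends to a continuous algebra homomorphism $\cE'_r(\cF) \to \bL(C^\infty(M))$, i.e. a continuous representation.

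The main obstacle is the bookkeeping around $r$-local finiteness and the precise topologies: one must check that the termwise convergence $a_{i,U} \to a_U$ together with uniform $r$-local finiteness really yields convergence of $\Op(\bfa_i)$ uniformly on bounded subsets of $C^\infty(M)$ (so that $\ker \Op$ is genuinely closed and contains the closure $\cN_r$, not just the algebraic ideal). Once that continuity is pinned down, everything else is a formal consequence of Proposition \ref{prop:Op}. I would organize the writeup as: first note $\Op$ extends to $\cE'_r(\cU_{hol})$ and is an algebra homomorphism (termwise from Proposition \ref{prop:Op}(a),(b) plus $r$-local finiteness); then prove continuity as above; then invoke Proposition \ref{prop:Op}(c) plus continuity to get $\cN_r \subseteq \ker\Op$; then conclude the descent to $\cE'_r(\cF)$.
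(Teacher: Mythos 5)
Your proposal is correct and is essentially the argument the paper leaves implicit (the theorem is stated as an immediate consequence of Proposition \ref{prop:Op}): parts (a)--(c) of that proposition give linearity, multiplicativity, and vanishing on the generators $a-\pi_*a$, while continuity of $\Op$ on $\cE'_r(\cU_{hol})$ forces the closed ideal $\cN_r$ into the kernel. One small point of bookkeeping: the local finiteness of the sum $\sum_U(a_U,s_U^*f)$ is governed by $r(\supp(a_U))$ meeting compact subsets of $M$ (where the output is evaluated), not by the support of $f$, which need not be compact; your subsequent phrasing ``the sum is locally finite in $M$'' is the correct formulation.
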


\begin{ex}
 \label{ex:translation_Op}
 Let $S$ be a local bisection of a bisubmersion $U$, as in item $(e)$ of \S \ref{sec:bisubm}, and let $\Phi_S = r|_S \circ s|_S^{-1}$ be the local diffeomorphism that it carries. Fix also a smooth function $c\in C^\infty(M)$ with support contained in $r(S)$.  Recall from Example \ref{ex:Dirac_distribution} that the $r$-fibred Dirac distribution supported on $S$ with coefficient $c$ is defined such that its pairing with $\phi\in C^\infty(M)$ is given by restriction to $S \cong r(S)$ and then multiplication by $c$.  We will use the notation $c\Delta^S$ for this, with $S$ in superscript to indicate it is an $r$-fibred Dirac distribution.  

We claim that for all $x\in r(S)$ we have
\[
 (\Op(c\Delta^S)f)(x) = c(x) f(\Phi_S^{-1}(x)),
\]
and $(\Op(c\Delta^S)f)(x) = 0$ when $x\notin r(S)$.
Indeed, if $x\in r(S)$ then
\[
(\Op(c\Delta^S)f)(x) = (c\Delta^S, s^* f)(x) = c(x)  f(s(r|_S^{-1}(x)).
\]
In particular, if $X\subseteq s(S)$ is a closed subset and if $c \in C^\infty(M)$ is a smooth bump function with $c|_{\Phi_S(X)} \equiv 1$ and $c|_{M\setminus r(S)} \equiv 0$, then for all $f$ supported in $X$ we have
 \[
  (\Op(c\Delta^S)f) = f\circ\Phi_S^{-1}.
 \]
 For instance, using a path-holonomy bisubmersion, we obtain flows along vector fields tangent to $\cF$ in this way.

 In particular, this example shows that the quotient algebra $\cE'_r(\cF)$ is not trivial.
\end{ex}

\subsection{Propagation of supports}

To describe propagation of supports under the action of $\Op(a)$, we introduce some further notation.

\begin{definition}
	\label{def:propagation}
	Let $U\rightrightarrows M$ be a bisubmersion.  For subsets $N \subseteq M$ and $V\subseteq U$ we define the following subsets of $M$:
	\begin{align*}
	V\circ N &= \{r(v) : v\in V \text{ with } s(v)\in N \}, \\
	N\circ V &= \{s(v) : v\in V \text{ with } r(v)\in N \}.
	\end{align*}
\end{definition}

\begin{prop}
	\label{prop:propagation}
	Let $U\rightrightarrows M$ be a bisubmersion and $a\in \cE'_r(U)$.  For any $f\in C^\infty(M)$ we have
		\[
		\supp(\Op(a)f) \subseteq \supp(a)\circ\supp(f).
		\]
\end{prop}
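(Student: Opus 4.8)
The goal is to control the support of $\Op(a)f = (a, s_U^*f)$, which is a smooth function on $M$. The key observation is that the pairing $(a, \phi)(x)$ depends only on $\phi$ restricted to the fibre $r^{-1}(x)$, by the defining $C^\infty(M)$-linearity property of an $r$-fibred distribution (item (c) of \S\ref{sec:fibdistrsubm}), applied with $q = r_U$. So I would begin by spelling this out: for fixed $x \in M$, the value $(\Op(a)f)(x) = (a, s_U^*f)(x)$ equals $(a^x, (s_U^*f)|_{r^{-1}(x)})$, where $a^x \in \cE'(r^{-1}(x))$ is the distribution on the $r$-fibre over $x$ determined by $a$.

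\textbf{Key steps.} First, suppose $x \notin \supp(a) \circ \supp(f)$; I want to show $(\Op(a)f)(x) = 0$. The restriction of $s_U^*f$ to the fibre $r^{-1}(x)$ is the function $v \mapsto f(s(v))$ on $r^{-1}(x)$. Now, $(a^x, (s_U^*f)|_{r^{-1}(x)})$ depends only on the values of this function near $\supp(a^x) = \supp(a) \cap r^{-1}(x)$. If $v \in \supp(a) \cap r^{-1}(x)$, then $r(v) = x$; if moreover $s(v) \in \supp(f)$, then by Definition \ref{def:propagation} we would have $x = r(v) \in \supp(a) \circ \supp(f)$, contradicting our assumption. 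Hence for every $v \in \supp(a) \cap r^{-1}(x)$ we have $s(v) \notin \supp(f)$, i.e. $f(s(v)) = 0$. To conclude $(a^x, (s_U^*f)|_{r^{-1}(x)}) = 0$ I would argue that since $s: U \to M$ is continuous and $f$ vanishes on an open neighbourhood of $\supp(f)^c$, the function $(s_U^*f)|_{r^{-1}(x)}$ vanishes on an open neighbourhood (in $r^{-1}(x)$) of the compact set $\supp(a^x)$; by definition of the support of a distribution, pairing a distribution against a function vanishing near its support gives $0$. This shows $(\Op(a)f)(x) = 0$, proving $\supp(\Op(a)f) \subseteq \overline{\supp(a)\circ\supp(f)}$, and since $\supp(a)$ is $r$-proper and $\supp(f)$ closed, one checks $\supp(a)\circ\supp(f)$ is already closed (the image of the closed set $\supp(a) \cap s^{-1}(\supp f)$, on which $r$ is proper, under $r$), so the closure is unnecessary.

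\textbf{Main obstacle.} The only delicate point is the passage from "$f(s(v)) = 0$ for all $v \in \supp(a^x)$" to "$(a^x, (s_U^*f)|_{r^{-1}(x)}) = 0$" — a distribution can be nonzero when paired against a function that merely vanishes \emph{on} its support but not in a neighbourhood. So I need the stronger statement that $s_U^*f$ vanishes on an open neighbourhood of $\supp(a)\cap r^{-1}(x)$, which requires: if $v \in \supp(a) \cap r^{-1}(x)$ then $s(v) \notin \supp(f)$, hence $s(v)$ lies in the open set $M \setminus \supp(f)$ on which $f \equiv 0$, hence $v$ lies in the open set $s^{-1}(M \setminus \supp(f))$ on which $s_U^*f \equiv 0$. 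This open set contains all of $\supp(a) \cap r^{-1}(x)$, so we are done. Thus the argument goes through cleanly once this neighbourhood version is made explicit; there is no real difficulty, just care in handling supports of distributions versus supports of functions.
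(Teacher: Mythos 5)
Your proof is correct and is essentially an expanded version of the paper's one-line argument, which simply observes that $\supp(\Op(a)f)\subseteq r_U\bigl(\supp(a)\cap s_U^{-1}(\supp f)\bigr)=\supp(a)\circ\supp(f)$. Your careful handling of the neighbourhood issue (vanishing of $s_U^*f$ on an open set containing $\supp(a)\cap r^{-1}(x)$, not merely on that set) and of the closedness of $\supp(a)\circ\supp(f)$ via $r$-properness correctly fills in the details the paper leaves implicit.
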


\begin{proof}
	The support of $\Op(a)f$ lies in $r_U(\supp(a)\cap s_U^{-1}(\supp f)) = \supp(a)\circ\supp(f)$.
\end{proof}

\begin{definition}
	\label{def:proper}
	Let $U$ be a bisubmersion.  A subset $X\subseteq U$ is called \emph{proper} if it is both $r$- and $s$-proper.
\end{definition}

\begin{cor}
  \label{cor:proper_support_action}
  If $a\in \cE'_r(U)$ has proper support, then $\Op(a)$ maps $\Cc^\infty(M)$ into itself.
\end{cor}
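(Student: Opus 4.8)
The plan is to combine the support-propagation estimate of Proposition \ref{prop:propagation} with the properness of $\supp(a)$. Let $f\in\Cc^\infty(M)$, so $K:=\supp(f)$ is compact. By Proposition \ref{prop:propagation} we have $\supp(\Op(a)f)\subseteq \supp(a)\circ\supp(f) = \supp(a)\circ K$, so it suffices to show that $\supp(a)\circ K$ has compact closure in $M$.

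First I would unwind the definition: $\supp(a)\circ K = \{ s_U(v) : v\in\supp(a),\ r_U(v)\in K\} = s_U\big(\supp(a)\cap r_U^{-1}(K)\big)$. Since $\supp(a)$ is $r$-proper (being proper, by hypothesis), the restriction $r_U:\supp(a)\to M$ is a proper map, hence $\supp(a)\cap r_U^{-1}(K)$ is compact. Applying the continuous map $s_U$ to this compact set yields a compact subset of $M$, so $\supp(a)\circ K$ is compact (in particular closed). Therefore $\supp(\Op(a)f)$ is a closed subset of a compact set, hence compact, which shows $\Op(a)f\in\Cc^\infty(M)$.

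There isn't really a hard part here: the corollary is essentially immediate once Proposition \ref{prop:propagation} is in hand, and the only thing to be slightly careful about is which half of "proper" ($r$-proper versus $s$-proper) is the relevant one — it is $r$-properness that makes $\supp(a)\cap r_U^{-1}(K)$ compact, and then $s_U$-continuity does the rest. One should also note that $\Op(a)f$ is already known to be smooth by Proposition \ref{prop:Op}(a), so the only additional content is the support statement. The argument extends verbatim to an $r$-locally finite family $\mathbf{a}=\sum_U a_U$ with each $\supp(a_U)$ proper and only finitely many meeting any compact set, since then $\supp(\Op(\mathbf{a})f)$ is contained in a finite union of compact sets $\supp(a_U)\circ K$.
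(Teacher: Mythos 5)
Your overall strategy is the paper's: combine the support estimate of Proposition \ref{prop:propagation} with properness of $\supp(a)$. But you have unwound the definition of $\supp(a)\circ K$ incorrectly, and as a result you single out the wrong half of properness. By Definition \ref{def:propagation}, for $V\subseteq U$ and $N\subseteq M$ one has $V\circ N=\{r(v):v\in V,\ s(v)\in N\}$, so
\[
\supp(a)\circ K \;=\; r_U\bigl(\supp(a)\cap s_U^{-1}(K)\bigr);
\]
the set you wrote, $s_U\bigl(\supp(a)\cap r_U^{-1}(K)\bigr)$, is $K\circ\supp(a)$, which is not the set that contains $\supp(\Op(a)f)$. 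Consequently the relevant half of properness is $s$-properness: it is $s$-properness of $\supp(a)$ that makes $\supp(a)\cap s_U^{-1}(K)$ compact, and then continuity of $r_U$ does the rest. This is not a cosmetic point: every $a\in\cE'_r(U)$ automatically has $r$-proper support (item (d) of \S\ref{sec:fibdistrsubm}), so if $r$-properness were what drives the argument, the hypothesis of the corollary would be vacuous and the statement would hold for all of $\cE'_r(U)$ --- which it does not (e.g.\ on the pair groupoid $\RR\times\RR$ the smooth kernel $a(x,y)=\psi(y)$ with $\psi$ a bump function is $r$-proper but sends compactly supported functions to constants).

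Since your hypothesis grants both $r$- and $s$-properness, the conclusion survives once you swap $r$ and $s$ in the computation; with that correction your argument coincides with the paper's one-line proof, and your remark about extending to $r$-locally finite families is fine.
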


\begin{proof}
\begin{sloppypar}
If $\supp(a)$ is $s$-proper and $\supp(f)$ is compact, then $\supp(a) \cap s_U^{-1}(\supp(f))$ is compact so $\supp(\Op(a)f)$ is compact.\qedhere
\end{sloppypar}
\end{proof}

The algebra $\cE'_s(\cF)$ of $s$-fibred distributions acts naturally on the distribution space $\cE'(M)$ via the transpose: if $b\in\cE'_s(U)$, we define $\widetilde{\Op}(b) \in \bL(\cE'(M))$ by
\[
 (\widetilde{\Op}(b)\omega, f) = (\omega, \Op(b^\rmt)f),
\]
for all $\omega\in\cE'(M)$, $f\in C^\infty(M)$.  It follows from Proposition \ref{prop:convolution_algebra} that this defines an algebra representation of $\cE'_s(\cF)$ on $\cE'(M)$.
Moreover, if $b\in\cE'_s(U)$ has $r$-proper support, then $\widetilde{\Op}$ extends to an action on $\cD'(M)$. 

Ultimately, we will want an algebra of distributions that acts on all four of the spaces $C^\infty(M)$, $\Cc^\infty(M)$, $\cE'(M)$ and $\cD'(M)$.  This requires the notion of transverse distributions, which we treat in Section \ref{sec:proper}.

\section{The ideal of smooth fibred densities}
\label{sec:smooth_ideal}

Inside the algebra of continuous operators on $C^\infty(M)$ is the right ideal of smoothing operators which map $\cD'(M)$ into $C^\infty(M)$.  This consists of Schwartz kernel operators with kernels in $C^\infty(M) \,\hat\otimes\, \Cc^\infty(M;|\Omega|)$.  The generalization of this in our context is the right ideal of smooth $r$-fibred densities.

Let $q:U \to M$ be a submersion. Let $|\Omega_q|$ denote the bundle of $1$-densities along the longitudinal tangent bundle of the $q$-fibration $\ker (dq) \subseteq TU$.  Let us write $C_q^\infty(U;|\Omega_q|)$ for the space of smooth sections of $|\Omega_q|$ with $q$-proper support.  Any such section $a$ defines a $q$-fibred distribution on $U$ by the formula
\[
 (a, \phi)(x) = \int_{u\in q^{-1}(x)} a(u) \phi(u).
\]
We call these elements \emph{smooth $q$-fibred densities}.  

In particular, if $(U,r,s)$ is a bisubmersion for $\cF$ then we have the subspaces $C_r^\infty(U;|\Omega_r|) \subset\cE'_r(U)$ and $C_s^\infty(U;|\Omega_s|) \subset\cE'_s(U)$.  

We note the following.

\begin{lemma}
 \label{lem:smooth_integration}
 Let $\pi:U' \to U$ be a submersive morphism of bisubmersions.
 Integration along the fibres restricts to a map
 \[
  \pi_* : C^\infty_r(U';|\Omega_r|) \to C^\infty_r(U;|\Omega_r|). 
 \] 
 Moreover, if $\pi$ is onto then this map is surjective.

 Analogous statements hold with $s$ in place of $r$ throughout.
\end{lemma}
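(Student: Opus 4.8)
The plan is to reduce to the model case $U' = U \times \RR^k$ with $\pi$ the first projection, exactly as in the proof of Lemma~\ref{lem:integration_is_surjective}, and then to observe that in that case integration along the fibres of $\pi$ is literally the fibrewise integration of densities. The key point is that for a submersion $\pi:U'\to U$ of bisubmersions (so $r_{U'} = r_U\circ\pi$), the longitudinal density bundle factors: along each $\pi$-fibre we have $\ker(dr_{U'}) = \ker(d\pi) \oplus (\text{horizontal lift of }\ker(dr_U))$ locally, so $|\Omega_{r_{U'}}| \cong |\Omega_\pi| \otimes \pi^*|\Omega_{r_U}|$. Given a smooth section $a$ of $|\Omega_{r_{U'}}|$ with $r_{U'}$-proper support, partial integration over the $\pi$-fibres against the $|\Omega_\pi|$-component yields a smooth section $\pi_*a$ of $|\Omega_{r_U}|$, and one checks directly from the defining formula $(\pi_*a,\phi) = (a,\pi^*\phi)$ that this coincides with the pushforward of Definition~\ref{dfn:pushforward}. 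Properness of $\supp(\pi_*a)$ over $M$ follows since $\supp(\pi_*a) \subseteq \pi(\supp(a))$ and $r_U\circ\pi = r_{U'}$ is proper on $\supp(a)$.

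Concretely I would proceed in three steps. First, treat $U' = U\times\RR^k$: here $|\Omega_{r_{U'}}| = |\Omega_{r_U}|\boxtimes|\Omega_{\RR^k}|$, a smooth compactly-$z$-supported density $a(u,z)$ integrates to $\pi_*a(u) = \int_{\RR^k} a(u,z)$, which is visibly smooth with $r_U$-proper support, and this agrees with the pushforward pairing. Second, for general submersive $\pi$, use the local triviality of submersions: cover $U'$ by charts on which $\pi$ looks like a projection $U\times\RR^k \to U$, patch with a partition of unity subordinate to this cover (the $r_{U'}$-properness of $\supp(a)$ lets us take the sum locally finite over $M$), and apply the model case on each piece. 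Third, for surjectivity when $\pi$ is onto, mimic the proof of Lemma~\ref{lem:integration_is_surjective}: in the model case, given a smooth $r_U$-proper density $b$ on $U$, pick $\omega\in\Cc^\infty(\RR^k)$ with $\int\omega = 1$ and set $a(u,z) = b(u)\,\omega(z)\,|dz|$, which is smooth, $r_{U'}$-proper, and satisfies $\pi_*a = b$; then globalize by a partition of unity. The statement with $s$ replacing $r$ is identical.

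The main obstacle is bookkeeping rather than anything conceptual: one must verify carefully that the canonical factorization $|\Omega_{r_{U'}}| \cong |\Omega_\pi|\otimes\pi^*|\Omega_{r_U}|$ is well-defined and natural (it depends on the short exact sequence $0\to\ker d\pi \to \ker dr_{U'} \to \pi^*\ker dr_U \to 0$ of bundles over $U'$, which is where the hypothesis $r_{U'} = r_U\circ\pi$ with $\pi$ a submersion enters), and that the partition-of-unity patching in the non-model case respects $r$-properness of supports so that the resulting section is globally defined and smooth. Once that identification is in hand, the agreement with Definition~\ref{dfn:pushforward} and the surjectivity argument are both routine, being essentially Fubini together with the construction already used in Lemma~\ref{lem:integration_is_surjective}.
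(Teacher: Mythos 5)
Your proof is correct and follows essentially the same route as the paper, which treats the first assertion as standard (fibrewise integration of smooth fibred densities along a submersion is smooth) and proves surjectivity by the same lifting $a(u,z)=b(u)\,\omega(z)\,|dz|$ borrowed from the proof of Lemma \ref{lem:integration_is_surjective}, globalized by a partition of unity. The extra detail you supply on the factorization $|\Omega_{r_{U'}}|\cong|\Omega_\pi|\otimes\pi^*|\Omega_{r_U}|$ is a correct elaboration of what the paper leaves implicit.
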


\begin{proof}The only point we need to check is surjectivity. This follows from the lifting process in the proof of Lemma \ref{lem:integration_is_surjective}.
\end{proof}

\begin{definition}
We write 
\[
 C_r^\infty(\cU_\hol;|\Omega_r|) = \{ \bfa=(a_U) \in \cE'_r(\cU_\hol) : a_U \in C_r^\infty(U;|\Omega_r|) \text{ for all } U \in \cU_\hol\},
\]
and define $C_r^\infty(\cF;|\Omega_r|)$ to be the image of $C_r^\infty(\cU_\hol;|\Omega_r|)$ in the quotient $\cE'_r(\cF)$.

We define $C_s^\infty(\cF) \subseteq \cE'_s(\cF)$ similarly.
\end{definition}

The main result of this section is the following.

\begin{thm}
 \label{thm:smooth_ideal}
 The space $C^\infty_r(\cF;|\Omega_r|)$ is a right ideal in the algebra $\cE'_r(\cF)$.  Likewise, $C^\infty_s(\cF;|\Omega_s|)$ is a left ideal in $\cE'_s(\cF)$.
\end{thm}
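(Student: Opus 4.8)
The plan is to reduce the statement to a computation at the level of individual bisubmersions and then lift it through the quotient. By Proposition \ref{prop:convolution_algebra}, transposition is an anti-isomorphism $\cE'_r(\cF)\to\cE'_s(\cF)$ carrying the convolution product to the opposite product; under this isomorphism smooth $r$-fibred densities on $U$ correspond to smooth $s$-fibred densities on $U^\rmt$, so the statement for $C^\infty_s(\cF;|\Omega_s|)$ follows formally from the one for $C^\infty_r(\cF;|\Omega_r|)$. Thus it suffices to prove that $C^\infty_r(\cF;|\Omega_r|)$ is a right ideal, \emph{i.e.}, that for $\bfa\in C^\infty_r(\cU_\hol;|\Omega_r|)$ and $\bfb\in\cE'_r(\cU_\hol)$, the class of $\bfa*\bfb$ in $\cE'_r(\cF)$ lies in $C^\infty_r(\cF;|\Omega_r|)$. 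By bilinearity, continuity of convolution, and the fact that $C^\infty_r(\cF;|\Omega_r|)$ is (by definition) the image of a set that is closed under the relevant operations, I would reduce to the case $\bfa=a\in C^\infty_r(U;|\Omega_r|)$ and $\bfb=b\in\cE'_r(V)$ for single bisubmersions $U,V\in\cU_\hol$, and show $a*b$ is equivalent modulo $\cN_r$ to a smooth $r$-fibred density on some bisubmersion.

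The key local step is this: if $a\in C^\infty_r(U;|\Omega_r|)$ is a smooth $r$-fibred density and $b\in\cE'_r(V)$ is arbitrary, then $a*b\in\cE'_r(U\circ V)$ is again a smooth $r$-fibred density. To see this, recall that $a*b = a\circ s_U^*b$, where $s_U^*b\in\cE'_{\pr_U}(U\circ V)$ is the pullback along $s_U$, which on the fibres of $\pr_U$ is just $(s_U^*b)_u = b^{s_U(u)}$. Now integrating $s_U^*b$ against the smooth density $a$ along the $\pr_U$-fibre over a point of $U$ and then against the remaining $r_U$-direction: concretely, for $\phi\in C^\infty(U\circ V)$ one has $(a*b,\phi)(x) = \int_{u\in r_U^{-1}(x)} a(u)\,(b^{s_U(u)}, \phi(u,\;\cdot\;))$, and since $a$ is a smooth density and $u\mapsto (b^{s_U(u)},\psi)$ is smooth in $u$ for fixed $\psi$ (by $C^\infty(M)$-linearity and smoothness of $s_U$), this pairing is given by integration against a smooth family of densities on the $r_{U\circ V}$-fibres. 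The support condition ($r$-properness of $\supp(a*b)\subseteq\supp(a)\circ\supp(b)$) follows from Lemma \ref{lem:disjoint_support} together with $r$-properness of $\supp(a)$. Hence $a*b\in C^\infty_r(U\circ V;|\Omega_r|)\subseteq C^\infty_r(\cU_\hol;|\Omega_r|)$, so its class lies in $C^\infty_r(\cF;|\Omega_r|)$.

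To handle the infinite sums and the closure in the definition of $\cN_r$, I would argue as follows. For $\bfa=\sum_U a_U$ with all $a_U$ smooth and $\bfb=\sum_V b_V$ arbitrary, $r$-local finiteness guarantees $\bfa*\bfb=\sum_{U,V}a_U*b_V$ is a well-defined $r$-locally finite family, and by the local step each $a_U*b_V$ is a smooth $r$-fibred density on $U\circ V\in\cU_\hol$; hence $\bfa*\bfb\in C^\infty_r(\cU_\hol;|\Omega_r|)$ already \emph{before} passing to the quotient. Therefore the class $[\bfa*\bfb]\in C^\infty_r(\cF;|\Omega_r|)$ directly, and one does not even need to chase the closure — the subspace $C^\infty_r(\cU_\hol;|\Omega_r|)\subseteq\cE'_r(\cU_\hol)$ is mapped into itself by right convolution with any element of $\cE'_r(\cU_\hol)$, and its image in the quotient is by definition $C^\infty_r(\cF;|\Omega_r|)$. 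The one point requiring care — and what I expect to be the main obstacle — is verifying cleanly that $a\circ s_U^*b$ really is \emph{given by integration against a smooth density}, rather than merely being a fibred distribution; this amounts to checking that the composition of the pushforward-type operation encoded in $a$ with the pullback $s_U^*b$ preserves the smooth-density property fibrewise, which I would do by writing out the fibre integral in a local chart where $U$ is a product and $r_U,s_U$ are coordinate projections, using Fubini to separate the integration against the smooth density $a(u)$ from the distributional pairing with $b$.
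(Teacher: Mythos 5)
Your overall architecture (reduce to single bisubmersions by local finiteness, deduce the $s$-statement from the $r$-statement via transposition, handle supports via Lemma \ref{lem:disjoint_support}) is sound and matches the paper. But the key local step is false as stated: for $a\in C^\infty_r(U;|\Omega_r|)$ and general $b\in\cE'_r(V)$, the convolution $a*b=a\circ s_U^*b$ is \emph{not} a smooth $r$-fibred density on $U\circ V$. Unwinding the definitions, the restriction of $a*b$ to the fibre $r_{U\circ V}^{-1}(x)=\{(u,v):r_U(u)=x,\ s_U(u)=r_V(v)\}$ is the distribution $\phi\mapsto\int_{u\in r_U^{-1}(x)}a(u)\,(b^{s_U(u)},\phi(u,\cdot))$, i.e.\ a smooth density in the $u$-direction tensored with the (arbitrary!) distribution $b^{s_U(u)}$ in the $v$-direction. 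If, say, $b=\Delta^S$ is an $r$-fibred Dirac distribution on a bisection (for instance the identity bisection of the pair groupoid $\RR\times\RR$), then $(a*b)^x$ is supported on a proper submanifold of the fibre and cannot be integration against any smooth density on $r_{U\circ V}^{-1}(x)$. So your assertion that $\bfa*\bfb$ lies in $C^\infty_r(\cU_\hol;|\Omega_r|)$ ``already before passing to the quotient'' cannot hold; what is true (and what the theorem asserts) is only that $a*b$ is \emph{equivalent modulo} $\cN_r$ to a smooth density on a different, smaller bisubmersion. The Fubini-in-local-coordinates argument you propose at the end cannot repair this, because the obstruction is not bookkeeping: no amount of rewriting the same fibrewise pairing makes a Dirac factor smooth.

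The missing ingredient is precisely the content of the paper's Lemmas \ref{lem:smooth_ideal1}--\ref{lem:smooth_ideal3}: one must produce a \emph{submersive morphism of bisubmersions} $\pi:U'\circ U'\to U$ (this exists near identity bisections by \cite[Proposition 2.7]{AS2}) and show that the \emph{pushforward} $\pi_*(a*b)$ is a smooth fibred density. That is where the analysis lives: after a change of variables $\eta\mapsto\theta(\eta,\xi,x)$ adapted to $\pi$ (with Jacobian factor $|D_\eta\theta|$), Fubini's theorem for distributions lets one integrate the distributional variable $\xi$ against the smooth data first, leaving a smooth density in the surviving variables of the target bisubmersion $U$. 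One then has to globalize from minimal path-holonomy bisubmersions and identity bisections to arbitrary bisubmersions, which the paper does with left/right translation by bisections (Lemma \ref{lem:translates}) before the final partition-of-unity argument. None of this machinery appears in your proposal, and without it the central claim is simply not established.
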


The rest of this section is dedicated to proving Theorem \ref{thm:smooth_ideal} for $C^\infty_s(\cF;|\Omega_s|)$.  The result for $C^\infty_r(\cF;|\Omega_r|)$ follows from this by applying the transpose and using Proposition \ref{prop:convolution_algebra}(b).  

\medskip

By linearity, it suffices to treat the case where $\bfa$ and $\bfb$ each live on a single bisubmersion, i.e., taking $a\in\cE'_s(U)$ and $b\in\cE'_s(V)$ for some bisubmersions $U$ and $V$.

We begin with the special case where $U=V$ is a minimal path holonomy bisubmersion at some point $x_0\in M$.  Therefore, let $\bfX = (X_1, \ldots , X_m)$ be a minimal generating family at $x_0\in M$ and let $U$ be an associated path holonomy bisubmersion.  This means we fix a small enough neighbourhood $M_0 \subseteq M$ of $x_0$ and a small enough neighbourhood $U \subseteq \R^m \times M_0$ of $\{0\} \times M_0$ so that we can define source and range maps on $U$ by
\[
 s(\xi,x) = x, \qquad r(\xi,x) = \Exp_\bfX(\xi,x),
\]
see Definition \ref{def:path_holonomy_bisubmersion} for notation.

By Proposition 2.7 of \cite{AS2}, there is a neighbourhood $U'\subseteq U$ of $(0,x_0)$ which admits a submersive morphism of bisubmersions
\begin{equation*}
 \label{eq:nice_neighbourhood}
 \pi : U'\circ U' \to U
\end{equation*}
with $\pi((0,x),(0,x)) = (0,x)$ for all $x\in M_0$.
Recall that
\[
 U'\circ U' = \{(\eta,y),(\xi,x) \in U'\times U' : y = \Exp_\bfX(\xi,x) \}.
\]
The coordinate $y$ is superfluous, so let us henceforth make the identification
\[
 U'\circ U' = \{(\eta,\xi,x) \in \R^m\times\R^m\times M_0 : 
                 (\xi,x)\in U' \text{ and } (\eta,\Exp_\bfX(\xi,x)) \in U' \},
\]
with range and source maps
\[
 s(\eta,\xi,x) = x, \qquad r(\eta,\xi,x) = \Exp_\bfX(\eta,\Exp_\bfX(\xi,x)).
\]

Let $\pi_1: U'\circ U' \to \RR^m$ be the function determined by
\[
 \pi(\eta,\xi,x) = (\pi_1(\eta,\xi,x),x).
\]
If we fix $\xi=0$ then we have $\pi_1(\eta,0,x) = \eta$ for all $(\eta,0,x)\in U'$, so the derivative $D_\eta\pi_1$ of $\pi_1$ with respect to the $\eta$ variables is the identity at all $(\eta,0,x)\in U'$.  Therefore, the map
\begin{align*}
 \Pi:U'\circ U' &\to \R^m \times \R^m \times M_0 \\
 \Pi(\eta,\xi,x) &= (\pi_1(\eta,\xi,x), \xi, x),
\end{align*}
has invertible derivative at every $(\eta,0,x)\in U'$.  By further restricting the neighbourhood $U'$ of $(0,x)$ in $U$, the map $\Pi$ is a diffeomorphism onto its image.  We thus have a smooth function $\theta:\R^m \times \R^m \times M_0 \to \R^m$ such that
\[
 \Pi(\theta(\eta,\xi,x),\xi,x) = (\eta, \xi, x)
\]
for all $(\eta, \xi, x)$ in $\Pi(U'\circ U')$, or equivalently
\[
  \pi(\theta(\eta,\xi,x),\xi,x) = (\eta, x).
\]

\begin{lemma}
 \label{lem:smooth_ideal1}
 Let $U$ be a minimal path holonomy bisubmersion at $x_0\in M$.  With the above notation, there exists a neighbourhood $U'$ of $(0,x_0)$ in $U$ such that the map $\Pi$ is a diffeomorphism onto its image. Then for any $a\in C_s^\infty(U';|\Omega_s|)$, and $b\in\cE'_s(U')$ we have
 \[
  \pi_*(a*b) \in C_s^\infty(U;|\Omega_s|).
 \]
\end{lemma}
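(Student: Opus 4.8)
The idea is to reduce the convolution on $U'\circ U'$ to an ordinary fibrewise convolution of smooth densities in the $\eta$-variable against a distribution in the $\xi$-variable, using the diffeomorphism $\Pi$ to straighten out the composition map $\pi$. First I would work entirely over the base: fix $x\in M_0$, and observe that the $s$-fibre of $U'\circ U'$ over $x$ is $\{(\eta,\xi) : (\xi,x)\in U',\ (\eta,\Exp_\bfX(\xi,x))\in U'\}$, with the projection $\pi$ restricting to the map $(\eta,\xi)\mapsto \pi_1(\eta,\xi,x)$ into the $s$-fibre $U_x$ of $U$ over $x$. By part (c) of \S\ref{sec:fibdistrsubm}, it is enough to check that the fibrewise distribution $(\pi_*(a*b))_x$ is a smooth density on $U_x$ for each $x$, with smooth dependence on $x$ and $s$-proper support; since $a\in C^\infty_s(U';|\Omega_s|)$ and $b\in\cE'_s(U')$, the support condition is automatic from Lemma \ref{lem:disjoint_support} (the support of $a*b$ sits in $\supp(a)\circ\supp(b)$, which is $s$-proper because $\supp(a)$ is, as $U'$ and hence $\supp(a)$ is relatively compact in the fibre directions after shrinking).

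Next I would unwind the definitions. We have $a*b = b\circ r_{U'}^*a \in \cE'_s(U'\circ U')$, and $\pi_*(a*b)$ pairs with $\phi\in C^\infty(U)$ by $(\pi_*(a*b),\phi) = (a*b,\pi^*\phi)$. Fibrewise over $x$, this is the pairing of the distribution $(b_x$ convolved with $a_x)$ on the $\eta,\xi$-variables against $\phi\circ\pi_1(\,\cdot\,,\,\cdot\,,x)$. Now change variables using $\Pi$: writing $(\eta',\xi,x) = \Pi(\eta,\xi,x)$, i.e. $\eta' = \pi_1(\eta,\xi,x)$ and $\eta = \theta(\eta',\xi,x)$, the map $\pi$ becomes the projection $(\eta',\xi,x)\mapsto(\eta',x)$, and the Jacobian $|\det D_\eta\pi_1| = |\det D_{\eta'}\theta|^{-1}$ is a smooth nonvanishing function. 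Pushing the density $a$ and the distribution $b$ through $\Pi$, the density $a$ stays a smooth density (densities transform correctly under diffeomorphisms), and $b$ stays a distribution; so in the new coordinates $\pi_*(a*b)$ is the pushforward along $(\eta',\xi)\mapsto\eta'$ of a distribution which is a smooth density in the $\eta'$-variable tensored (after the twist by the Jacobian and the $\xi$-dependence coming from $a$ and $b$) against something of the form ``smooth-in-$\eta'$ density paired against a compactly-supported-in-$\xi$ distribution''. Integrating out $\xi$ against that distribution leaves a smooth density in $\eta'$, with parameters $x$, which is exactly an element of $C^\infty_s(U;|\Omega_s|)$.

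More precisely, the key computation is: in the $\Pi$-coordinates, for $\phi\in C^\infty(U)$ depending on $(\eta',x)$,
\[
 (\pi_*(a*b))_x(\phi) \;=\; \int_{\eta'} \phi(\eta',x)\,\Big(\,b_x \big(\,\xi\mapsto a_x(\theta(\eta',\xi,x))\,|{\det D_{\eta'}\theta(\eta',\xi,x)}|\,\big)\Big)\,d\eta',
\]
where for fixed $\eta'$ the inner expression is the pairing of the distribution $b_x$ in $\xi$ against the smooth compactly supported (in $\xi$) function $\xi\mapsto a_x(\theta(\eta',\xi,x))|\det D_{\eta'}\theta|$. The map $(\eta',x,\xi)\mapsto a_x(\theta(\eta',\xi,x))|\det D_{\eta'}\theta(\eta',\xi,x)|$ is smooth, compactly supported in $\xi$ after shrinking $U'$, so applying the distribution $b_x$ (which varies smoothly with $x$ by \S\ref{sec:fibdistrsubm}(c)) and using continuity of $b$ with respect to $C^\infty$-convergence of test functions, the coefficient of $\phi(\eta',x)\,d\eta'$ is smooth in $(\eta',x)$ with support proper over $M_0$. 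That coefficient is precisely the density representing $\pi_*(a*b)$, proving $\pi_*(a*b)\in C^\infty_s(U;|\Omega_s|)$.

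\textbf{Main obstacle.} The delicate point is bookkeeping the density factors and verifying the smoothness and support claims uniformly in the base parameter $x$: one must be careful that the diffeomorphism $\Pi$ and the function $\theta$ are well-defined on a single neighbourhood $U'$ independent of $x$ near $x_0$ (this is what the inverse function theorem argument preceding the lemma provides, after shrinking), that the pushed-forward density $a$ genuinely transforms with the correct Jacobian, and that the $\xi$-support of $a_x\circ\theta$ is compact so that pairing with $b_x$ makes sense and yields smooth dependence. None of these steps is hard individually, but assembling them so that the final object lies in $C^\infty_s(U;|\Omega_s|)$ — and not merely fibrewise smooth — is where the care is needed.
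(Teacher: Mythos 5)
Your proof is correct and follows essentially the same route as the paper's: change variables via $\Pi$ and $\theta$, absorb the Jacobian factor $|D_\eta\theta|$ into the smooth density, and integrate out $\xi$ against $b$ (Fubini for fibred distributions) to exhibit the resulting smooth density coefficient in $(\eta,x)$. The only slip is notational: since $a$ enters the convolution as $r^*a$, the inner test function should be $\xi\mapsto a_{r(\xi,x)}(\theta(\eta',\xi,x))\,|D_{\eta'}\theta(\eta',\xi,x)|$ rather than $a_x(\cdots)$ (and the compact support in $\xi$ you invoke is unnecessary, as $b_x$ is already a compactly supported distribution on the fibre), which changes nothing in the argument.
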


\begin{proof}
 The existence of the neighbourhood $U'$ was proven in the discussion preceding the lemma.  
 Let $\phi\in C^\infty(U'\circ U')$.  We have
 \[
  (\pi_*(a*b) , \phi) = (b\circ r^*a, \pi^*\phi).
 \]
 Let us write $a = a_0(\xi,x) d\xi$ where $a_0$ is some $s$-properly supported smooth function on $U'$ and $d\xi$ is Lebesgue measure on $\R^n$.  For every $(\eta,\xi,x) \in U'\circ U'$ we have $\pi(\eta,\xi,x) = (\pi_1(\eta,\xi,x),x)$, so that
 \begin{align*}
  (r^*a_0,\pi^*\phi)(\xi,x) 
   &=\int_{\eta\in\R^m} r^*a_0(\eta,\xi,x) \phi(\pi_1(\eta,\xi,x),x) \,d\eta \\
   &=\int_{\eta\in\R^m} r^*a_0(\theta(\eta,\xi,x),\xi,x) \phi(\eta,x) 
    |D_\eta\theta(\eta,\xi,x)| \, d\eta,
 \end{align*}
 where in the last line we have used the change of variables $\eta \to \theta(\eta,\xi,x)$.  
 Let us write 
 \[
  \tilde{a}_0(\eta,\xi,x) 
    = r^*a_0(\theta(\eta,\xi,x),\xi,x) |D_\eta\theta(\eta,\xi,x)|,
 \]
 which is a smooth function on $\Pi( U'\circ U') \subseteq \R^m \times \R^m \times M_0$.  Using the formal notation $\int_\xi b(\xi,x)\psi(\xi,x)\,d\xi$ to denote the value of $(b,\psi)(x)$, Fubini's Theorem for distributions gives
 \begin{align*}
  (\pi_*(a*b) , \phi)(x) 
   &= \int_{\xi \in \R^m} b(\xi,x) 
       \left( \int_{\eta\in\RR^m} \tilde{a}_0(\eta,\xi,x) \phi(\eta,x) \,d\eta \right) d\xi \\
   &= \int_{\eta\in\RR^m} \left( \int_{\xi \in \R^m} b(\xi,x) \tilde{a}_0(\eta,\xi,x) \, d\xi \right)
       \phi(\eta,x) \,d\eta.
 \end{align*}
 The integral in brackets in the last line is a smooth function of $(\eta,x)\in\R^m\times M_0$. Since $\pi_*(a*b)$ is automatically $s$-properly supported, the result follows.
\end{proof}

\begin{lemma}
 \label{lem:smooth_ideal2}
 Let $V, W \in \cU_\hol$ and suppose $S\subseteq V$ and $T\subset W$ are local identity bisections.  For every $v\in S$ and $w\in T$ there exist open neighbourhoods $V_v\subseteq V$ of $v$ (depending only on $v$) and $W_{v,w} \subseteq W$ of $w$ (depending on both $v$ and $w$) such that for any $a\in C_s^\infty(V_v;|\Omega_s|)$ and $b\in\cE'_s(W_{v,w})$ we have $a*b \in C_s^\infty(\cF;|\Omega_s|)$.
\end{lemma}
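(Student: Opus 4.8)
The plan is to reduce the general case of two bisubmersions $V$ and $W$ carrying identity bisections through $v$ and $w$ to the special situation already handled in Lemma \ref{lem:smooth_ideal1}, namely a minimal path holonomy bisubmersion composed with itself. The key structural fact to invoke is the local uniqueness of bisubmersions near an identity bisection: by the results of \cite{AS1} (the ``minimality'' results on path holonomy bisubmersions), if $U$ is a minimal path holonomy bisubmersion at $x_0 = s(v)$, then near $v$ there is a local morphism of bisubmersions from $V$ into $U$ carrying $v$ to a point on the identity bisection of $U$; similarly near $w$ there is a local morphism from $W$ into $U$ (after possibly shrinking, and using that $s(w)$ can be taken in the same chart $M_0$). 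So first I would fix $x_0 = s(v)$, choose a minimal path holonomy bisubmersion $U$ at $x_0$, and choose the neighbourhood $U' \subseteq U$ provided by Lemma \ref{lem:smooth_ideal1} (on which $\Pi$ is a diffeomorphism and $\pi : U'\circ U' \to U$ is defined).

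Next I would produce the neighbourhoods. Using the local morphisms above, pick $V_v \subseteq V$ a neighbourhood of $v$ admitting a submersive morphism $\alpha : V_v \to U'$ with $\alpha(v)$ on the identity bisection, and — this is where the dependence on both $v$ and $w$ enters — pick $W_{v,w} \subseteq W$ a neighbourhood of $w$ admitting a submersive morphism $\beta : W_{v,w} \to U'$; the domain $W_{v,w}$ must be chosen small enough that its image under $\beta$ lands in the piece of $U'$ that composes correctly with $\alpha(V_v)$ inside $U' \circ U'$ (i.e.\ so that $\alpha(V_v) \circ \beta(W_{v,w}) \subseteq U' \circ U'$ and the relevant fibre products are nonempty), which is why $W_{v,w}$ depends on $v$ as well. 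One may need to replace $\alpha$, $\beta$ by their restrictions to open subsets so that they are genuine submersions onto open subsets of $U'$; this is always possible after shrinking, since a morphism of bisubmersions out of a neighbourhood of a point on an identity bisection can be taken submersive there.

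Now the computation is quick. Given $a \in C^\infty_s(V_v; |\Omega_s|)$ and $b \in \cE'_s(W_{v,w})$, by Lemma \ref{lem:smooth_integration} the pushforward $\alpha_* a$ lies in $C^\infty_s(U'; |\Omega_s|)$, and $\beta_* b \in \cE'_s(U')$. By Lemma \ref{lem:convolution_integration_compatible} (the $s$-fibred version, together with transposition via Proposition \ref{prop:convolution_algebra}(b)), we have $(\alpha\times\beta)_*(a * b) = (\alpha_* a) * (\beta_* b)$ as elements of $\cE'_s(U'\circ U')$. Applying $\pi_*$ and using functoriality of pushforward, $\pi_*\big((\alpha\times\beta)_*(a*b)\big) = \pi_*\big((\alpha_* a)*(\beta_* b)\big)$, and the right-hand side lies in $C^\infty_s(U; |\Omega_s|)$ by Lemma \ref{lem:smooth_ideal1}. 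Since the composite morphism $\pi \circ (\alpha\times\beta) : V_v \circ W_{v,w} \to U$ is a morphism of bisubmersions, the element $a*b \in \cE'_s(V_v\circ W_{v,w}) \subseteq \cE'_s(\cU_\hol)$ is equivalent modulo $\cN_s$ to its pushforward, which we have just shown to be a smooth $s$-fibred density. Hence $a*b \in C^\infty_s(\cF; |\Omega_s|)$, as claimed.

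**Main obstacle.** The genuinely delicate point is the second step: arranging that the local morphisms $\alpha$ and $\beta$ into the \emph{same} minimal bisubmersion $U'$ exist and are \emph{submersive}, with compatible images so that the composition lands in $U'\circ U'$ where Lemma \ref{lem:smooth_ideal1} applies. This requires carefully invoking the structure theory of path holonomy bisubmersions from \cite{AS1} near an identity bisection (existence of local morphisms to a minimal bisubmersion) and then shrinking domains in the right order — $V_v$ first, depending only on $v$, then $W_{v,w}$ depending on the already-chosen $V_v$ and on $w$ — which is exactly the asymmetry recorded in the statement. Everything after that is a formal consequence of the pushforward/convolution compatibility lemmas already established.
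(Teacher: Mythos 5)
Your overall strategy --- map everything into a minimal path-holonomy bisubmersion $U'$ at $x_0=s_V(v)$ and invoke Lemma \ref{lem:smooth_ideal1} --- is the same as the paper's, and your closing formal computation (compatibility of pushforward with convolution, then passing to classes modulo $\cN_s$) is fine. The problems are concentrated exactly in the step you flag as the main obstacle. First, a morphism $\beta:W_{v,w}\to U'$ need not exist at all: $U'$ lies over a chart $M_0$ around $x_0$, while $y_0:=s_W(w)=r_W(w)$ is an arbitrary point of $M$ which may lie outside $s_U(U')$; saying that ``$s(w)$ can be taken in the same chart $M_0$'' is not a justification, since $w$ is given. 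The paper deals with this by a case distinction: after arranging $\overline{s_V(V_v)}\subseteq s_U(U')$, if $y_0\notin s_U(U')$ one shrinks $W_{v,w}$ so that $r_W(W_{v,w})\cap s_V(V_v)=\emptyset$, whence $a*b=0$ by Lemma \ref{lem:disjoint_support}. This case is easy but cannot be skipped.

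Second, even when $y_0\in s_U(U')$, your assertion that $\beta$ ``can be taken submersive'' is false in general: Proposition 2.10 of \cite{AS1} yields a \emph{submersive} morphism into a path-holonomy bisubmersion only when the generating family is minimal \emph{at the point in question}, and $U$ was built from a family minimal at $x_0$, which need not be minimal at $y_0$ (the fibre dimension of $\cF$ is only upper semicontinuous). This is precisely why the paper does not map $W_{v,w}$ into $U'$ directly; instead it introduces a second minimal path-holonomy bisubmersion $\widetilde U$ at $y_0$, maps both $W_{v,w}$ and a neighbourhood $U_{v,w}\subseteq U'$ of $(0,y_0)$ submersively onto $\widetilde U$, and uses surjectivity (Lemma \ref{lem:integration_is_surjective}) to lift $\pi_{W*}b$ to some $\widetilde b\in\cE'_s(U_{v,w})$ with $\widetilde b\equiv b$, so that Lemma \ref{lem:smooth_ideal1} applies to $\pi_{V*}a*\widetilde b$. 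Note, however, that your argument only genuinely needs $\alpha$ to be submersive (so that $\alpha_*a$ remains a smooth $s$-fibred density, Lemma \ref{lem:smooth_integration}); the factor $b$ is an arbitrary $s$-fibred distribution and can be pushed forward along a non-submersive morphism. So if you drop the submersivity claim for $\beta$ and retain only the existence of a morphism $W_{v,w}\to U'$ (which Proposition 2.10 of \cite{AS1} does provide in this case, since $T$ is an identity bisection through $w$), your route closes the second case without the lifting trick --- but as written the submersivity claim is wrong, and the disjoint-support case is missing.
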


\begin{proof}
 Let us put $x_0 = s_V(v)= r_V(v)$.  Let $U$ be a minimal path-holonomy bisubmersion at $x_0$ and let $U'\subseteq U$ be a neighbourhood of $(0,x_0)$ of the kind described in Lemma \ref{lem:smooth_ideal1}.  Since $S$ is an identity bisection at $v$, Proposition 2.10 of \cite{AS1} shows that we can find a submersive morphism $\pi_V: V_v \to U'$ with $\pi_V(v) = (0;x_0)$ for some neighbourhood $V_v$ of $v$.  After possibly reducing $V_v$, we may assume that the closure of $s_V(V_v)$ lies in $s_U(U')$.

 Now put $y_0 = s_W(w) = r_W(w)$.  To define $W_{v,w}$, we consider two cases:
 \begin{itemize}
  \item 
  Suppose $y_0 \notin s_U(U')$.  Then we can find a neighbourhood $W_{v,w}$ of $w$ such that $r_W(W_{v,w}) \cap s_V(V_v) = \emptyset$.  In this case, Lemma \ref{lem:disjoint_support} shows that for all $a\in C_s^\infty(V_v;|\Omega_s|)$ and $b\in\cE'_s(W_{v,w})$ we have $a*b=0$, which proves the claim.

  \item
  Suppose $y_0\in s_U(U')$.  Let $\widetilde{U}$ be a minimal path-holonomy bisubmersion at $y_0$.  Given that we have identity bisubmersions $T \subset W$ passing through $w$ and $\{0\} \times M_0 \subset U$ passing through $(0,y_0)$, Proposition 2.10 of \cite{AS1} shows that there exist submersive morphisms
  \begin{align*}
   \pi_W : W_{v,w} &\to \widetilde{U}  &&\text{with } \pi_W(w) = (0,y_0), \\
   \pi_U : U_{v,w} & \to \widetilde{U}  &&\text{with } \pi_U(0,y_0) = (0,y_0),
  \end{align*}
  for some neighbourhoods $W_{v,w} \subseteq W$ of $w$ and $U_{v,w} \subseteq U'$ of $(0;y_0)$.  Moreover, by reducing $\widetilde{U}$ sufficiently, we may assume that $\pi_U$ is surjective.  Now, given  $a\in C_s^\infty(V_v;|\Omega_s|)$ and $b\in\cE'_s(W_{v,w})$, Lemma \ref{lem:integration_is_surjective} shows that we can find $\widetilde{b} \in \cE'_s(U_{v,w})$ such that $\pi_{U*}\widetilde{b} = \pi_{W*}b$. Then Lemma \ref{lem:smooth_ideal1} shows that
  \[
   a*b \equiv \pi_{V*}a * \widetilde{b} \in C_s^\infty(\cF;|\Omega_s|),
  \]
  which again proves the claim.
 \end{itemize}
\end{proof}

In order to extend this result to general bisubmersions, we use translations by bisections. 

\begin{definition}
 \label{def:bisubmersion_translation}
 Let $U, V \in \cU_\hol$ be bisubmersions.  Let $S \subset V$ be a local bisection, and let $\Phi_S=r|_S\circ s|_S^{-1}$ be the local diffeomorphism induced by $S$.  We define the \emph{right-translate} of $(U,r_U,s_U)$ by $S$ to be $(U_S,r_{U_S},s_{U_S})$ with
 \[
  U_S = s_U^{-1}(r_V(S)) \subseteq U, \qquad r_{U_S} = r_U, \qquad s_{U_S} = \Phi_S^{-1}\circ s_U.
 \]
 Likewise, the \emph{left-translate} of $(U,r_U,s_U)$ by $S$ is $(U^S,r_{U^S},s_{U^S})$ with
 \[
  U^S = r_U^{-1}(s_V(S)) \subseteq U, \qquad r_{U^S} = \Phi_S\circ r_U, \qquad s_{U_S} = s_U.
 \]
\end{definition}

Using the fact that the local diffeomorphism $\Phi_S$ preserves the foliation $\cF$, we see that $U_S$ and $U^S$ satisfy the bisubmersion axioms.  We want to further check that they belong to the maximal path-holonomy atlas.

For this, consider the subset $ U \circ S = U \srtimes S $ of $U \circ V$.  
The projection $ \pr_U: U \circ S \to U$ is a diffeomorphism onto $U_S$ which intertwines the range and source maps, and it follows that the embedding
\begin{equation}
 \label{eq:right_translation_inclusion}
 \iota_S : U_S \stackrel{\pr_U^{-1}}{\longrightarrow} U \circ S \hookrightarrow U\circ V
\end{equation}
is a morphism of bisubmersions.  This proves that $U_S$ is adapted to $\cU_\hol$.  The proof for $U^S$ is similar, using the morphism of bisubmersions
\begin{equation}
 \label{eq:left_translation_inclusion}
 \iota^S : U^S \stackrel{\pr_U^{-1}}{\longrightarrow} S \circ U \hookrightarrow V\circ U
\end{equation}
defined in the analogous way.

Next, we want to introduce the left and right translates of an $s$-fibred distribution.

We note that on the open subset $U_S\subseteq U$ the fibres of the two source maps $s_{U}$ and $s_{U_S} = \Phi_S^{-1}\circ s_U$ are the same, although the maps themselves are different.  Therefore, if $a\in \cE_s'(U)$ has $\supp(a)\subseteq U_S$, then $a$ also defines an $s$-fibred distribution on $U_S$, which we denote by $a_S$ and call the \emph{right translate of $a$ by $S$}.  Explicitly, $a_S$ is determined by
\[
 (a_S,\phi|_{U_S})(x) = 
  \begin{cases}
   (a,\phi)(\Phi_S(x)), & \text{if }x\in s_V(S) \\
   0, & \text{otherwise},
  \end{cases}
\]
for all $\phi\in C^\infty(U)$.  

A similar definition can be made for the left translate, and in fact is even easier since $s_{U^S} = s_U$ on $U^S$.  In this case, for $b\in \cE'_s(U)$ with $\supp(b)\subseteq U^S$ we define
\[
 (b^S, \phi|_{U^S}) = (b, \phi),
\]
for all $\phi\in C^\infty(U)$.

We collect some basic facts about left and right translates.

\begin{lemma}
 \label{lem:translates}
 Let $U$, $V$, $W$ be bisubmersions and let $S\subset V$ be local bisections.  If $a\in\cE'_s(U)$ with $\supp(a)\subseteq U_S$ then:
 \begin{enumerate}
  \item We have $(W\circ U)_S = W\circ U_S$, and for any $b\in\cE'_s(W)$ we have $(b*a)_S = b*a_S$.
  \item The right translate $a_S$ is a smooth $s$-fibred density if and only if $a$ is.
 \pauseenumerate
 \end{enumerate}
 Similarly, if $a\in\cE'_s(U)$ with $\supp(a) \subseteq U^S$ then:
 \begin{enumerate}
 \resumeenumerate
 	\item We have $(U \circ W)^S = U^S \cap W$, and for any $b\in\cE'_s(W)$ we have $(a*b)^S = a^S*b$.
  \item The left translate $a^S$ is a smooth $s$-fibred density if and only if $a$ is.
 \end{enumerate}
\end{lemma}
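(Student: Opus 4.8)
The strategy is to verify the four assertions of Lemma~\ref{lem:translates} by unwinding the definitions of composition of bisubmersions, of left/right translates of bisubmersions (Definition~\ref{def:bisubmersion_translation}), and of left/right translates of $s$-fibred distributions, then checking the claimed identities fibrewise using part~\eqref{part c} of \S\ref{sec:fibdistrsubm} (an $s$-fibred distribution is determined by its family of fibre distributions $(a_x)$). The smoothness statements (b) and (d) are immediate once the set-theoretic identities on the underlying spaces are in place, because a right or left translate changes only the source map, not the source-fibres themselves, so it does not alter the longitudinal density bundle $|\Omega_s|$ nor the condition of being a smooth section of it; only the $s$-properness of the support must be rechecked, and this is preserved because $\Phi_S$ is a local diffeomorphism.

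\textbf{Steps for (a).} First I would identify the underlying space: by Definition~\ref{def:bisubmersion_translation}, $(W\circ U)_S = s_{W\circ U}^{-1}(r_V(S))$, and since $s_{W\circ U}(w,u) = s_U(u)$, this is exactly $\{(w,u)\in W\circ U : s_U(u)\in r_V(S)\} = W\circ U_S$, with matching range maps $r_W(w)$ and source maps $\Phi_S^{-1}\circ s_U(u)$ on both sides. Next, for the distributional identity $(b*a)_S = b*a_S$, recall $b*a = b\circ r_U^*a$ (the $s$-fibred convolution of Definition~\ref{def:convolution}), whose support lies in $\supp(b)\circ\supp(a)\subseteq W\circ U_S$ by Lemma~\ref{lem:disjoint_support}. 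I would then pair both sides against $\phi|_{(W\circ U)_S}$ for $\phi\in C^\infty(W\circ U)$ and use the explicit formula for the right translate: $(b*a)_S$ evaluated at $x\in s_V(S)$ equals $(b*a)(\Phi_S(x))$, while $b*a_S$ evaluated at $x$ unwinds, via the fibrewise description of pullback in Definition~\ref{def:pullback} and the fact that $s_U$- and $s_{U_S}$-fibres coincide, to the same quantity; outside $s_V(S)$ both vanish. The cleanest route is to work at the level of fibre distributions: for $x\in s_V(S)$, both $(b*a_S)_x$ and $((b*a)_S)_x$ are supported on the common fibre and are seen to coincide by the associativity/functoriality already packaged in Lemma~\ref{lem:pullback-functoriality} and the defining property~\eqref{eq:pullback_distribution}.

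\textbf{Steps for (c).} This is the symmetric statement for left translates, and is genuinely easier: on $U^S$ the source map is unchanged ($s_{U^S}=s_U$), so the left translate of a distribution is literally the same distribution restricted to an open subset, i.e.\ its pairing is $(b^S,\phi|_{U^S})=(b,\phi)$. For the space identity, $(U\circ W)^S = r_{U\circ W}^{-1}(s_V(S))$ and $r_{U\circ W}(u,w)=r_U(u)$, so this is $\{(u,w)\in U\circ W : r_U(u)\in s_V(S)\}$; since the condition $s_U(u)=r_W(w)$ is exactly the membership condition for $U\circ W$, and $U^S = r_U^{-1}(s_V(S))$, one checks that this fibred product equals $U^S\cap W$ in the sense that $(u,w)\in U^S\circ W$ with $r_U(u)=\Phi_S^{-1}$ applied appropriately --- here one must be slightly careful with the source-map relabelling on $U^S$ --- and then $(a*b)^S = a^S*b$ follows by the same kind of fibrewise computation as in (a), using $a*b = b\circ r_W^*a$ and that $r_W$-pullback is unaffected since $r_W$ is untouched.

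\textbf{Main obstacle.} The routine part is extensive but the only genuinely delicate point is bookkeeping of \emph{which} range/source map is in force on the translated bisubmersion when forming compositions: in $(W\circ U)_S = W\circ U_S$, the composition on the right is formed using $r_{U_S}=r_U$ (so no change), whereas in the left-translate identity $(U\circ W)^S = U^S\cap W$ the composition is formed with $s_{U^S}=s_U$ but the range map $r_{U^S}=\Phi_S\circ r_U$ has changed, and one must confirm the fibred-product condition $s_{U^S}=r_W$ still cuts out the same subset. I expect this to be the step requiring the most care; once the identifications of underlying spaces are pinned down, the distributional identities are forced by \eqref{part c} of \S\ref{sec:fibdistrsubm} together with the functoriality in Lemma~\ref{lem:pullback-functoriality}, and the smoothness claims (b) and (d) reduce to the observation that translation alters neither the relevant fibres nor the density bundle, only relabels the base point via the local diffeomorphism $\Phi_S$.
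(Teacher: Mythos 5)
Your proposal is correct and follows essentially the same route as the paper's (much terser) proof: identify the underlying sets directly from Definition~\ref{def:bisubmersion_translation}, observe that translation relabels only the source (resp.\ range) map and not the fibres themselves, and deduce the distributional and smoothness claims fibrewise via the description of $s$-fibred distributions by their families $(a_x)$ in \S\ref{sec:fibdistrsubm}. Two small points that do not affect the argument: by Definition~\ref{def:convolution} the $s$-fibred convolution of $b\in\cE'_s(W)$ with $a\in\cE'_s(U)$ is $b*a=a\circ r_U^*b$ rather than $b\circ r_U^*a$ as written in your step (a), and you are right that the ``$U^S\cap W$'' in part (c) of the statement should be read as the composition $U^S\circ W$.
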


\begin{proof}
 For (a), note that
 \[
  (W\circ U)_S = \{(w,u) \in W\srtimes U : s_U(u) \in r_V(S)\}
   = W\circ U_S.
 \]
 Thus, the right translate $(b*a)_S$ makes sense, and its restrictions to the $s$-fibres of $W\circ U_S$ are identical to those of $b*a$, and hence to those of $b*a_S$.
 Part (b) follows immediately from the fact that the restrictions of $a$ and $a_S$ to the fibres of $U_S$ are identical.
 
 The other two statements are analogous.
\end{proof}

\begin{lemma}
 \label{lem:smooth_ideal3}
 Let $V, W \in \cU_\hol$.  For every $v\in V$ and $w\in W$ there exist open neighbourhoods $V_v\subseteq V$ of $v$ (depending only on $v$) and $W_{v,w} \subseteq W$ of $w$ (depending on both $v$ and $w$) such that for any $a\in C_s^\infty(V_v;|\Omega_s|)$ and $b\in\cE'_s(W_{v,w})$ we have $a*b \in C_s^\infty(\cF;|\Omega_s|)$.
\end{lemma}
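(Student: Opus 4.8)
The plan is to reduce to Lemma~\ref{lem:smooth_ideal2} using the translations of Definition~\ref{def:bisubmersion_translation}. If $s_V(v)\neq r_W(w)$, choose $V_v$, $W_{v,w}$ with $s_V(V_v)\cap r_W(W_{v,w})=\emptyset$; then $a*b=0$ by Lemma~\ref{lem:disjoint_support}. So assume $s_V(v)=r_W(w)=:y_0$, and put $x_0=r_V(v)$, $z_0=s_W(w)$. Pick a local bisection $S\subseteq V$ through $v$ (with $\Phi_S(y_0)=x_0$) and $T\subseteq W$ through $w$ (with $\Phi_T(z_0)=y_0$). Form the translated bisubmersions
\[
 V' = \bigl(r_V^{-1}(r_V(S)),\; \Phi_S^{-1}\circ r_V,\; s_V\bigr),\qquad
 W' = \bigl(s_W^{-1}(s_W(T)),\; r_W,\; \Phi_T\circ s_W\bigr),
\]
which are, respectively, a left-translate of $V$ and a right-translate of $W$ in the sense of Definition~\ref{def:bisubmersion_translation} (by the bisections $S\subset V^\rmt$ and $T\subset W^\rmt$), so by \eqref{eq:left_translation_inclusion} and \eqref{eq:right_translation_inclusion} they lie in $\cU_\hol$. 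A direct computation with these formulas shows that $S$ is now an \emph{identity} bisection of $V'$ through $v$ and $T$ an identity bisection of $W'$ through $w$, and that $r_{V'}(v)=s_{V'}(v)=y_0=r_{W'}(w)=s_{W'}(w)$.

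Applying Lemma~\ref{lem:smooth_ideal2} to the pairs $(V',S)$ and $(W',T)$ gives neighbourhoods $V'_v\subseteq V'$ of $v$ (depending only on $v$) and $W'_{v,w}\subseteq W'$ of $w$ (depending on $v,w$) with the stated convolution property; I will take $V_v=V'_v$ and $W_{v,w}=W'_{v,w}$, shrunk if necessary so that distributions supported in them are supported in $r_V^{-1}(r_V(S))$, resp.\ $s_W^{-1}(s_W(T))$. For $a\in C_s^\infty(V_v;|\Omega_s|)$ and $b\in\cE'_s(W_{v,w})$ the translates $\bar a\in\cE'_s(V'_v)$ and $\bar b\in\cE'_s(W'_{v,w})$ are then defined, and by Lemma~\ref{lem:translates}(d) we have $\bar a\in C_s^\infty(V'_v;|\Omega_s|)$; hence $\bar a*\bar b\in C_s^\infty(\cF;|\Omega_s|)$. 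Moreover, by Lemma~\ref{lem:translates}(a) and (c), $\bar a*\bar b$ is a left-then-right translate of $a*b$, and translations alter neither the $s$-fibre distributions nor the underlying space near the support.

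It remains to transfer this conclusion back through the two translations, which is the delicate point. The natural way is to observe that, modulo $\cN_s$, the left- and right-translates of a fibred distribution are precisely its convolutions with the $s$-fibred Dirac distributions $\Delta_S$, $\Delta_T$ supported on the bisections (with coefficient $\equiv 1$ near $v$, $w$, viewed on the relevant bisubmersions); this follows by chasing the definition of the translates through the morphisms \eqref{eq:left_translation_inclusion}, \eqref{eq:right_translation_inclusion}. Since a convolution of two such Dirac distributions along a composite bisection is again a Dirac supported on that bisection, and is a local unit when the composite bisection is an identity bisection, one gets $a\equiv\Delta_S*\bar a$ and $b\equiv\bar b*\Delta_T$ in $\cE'_s(\cF)$, hence
\[
 a*b \;\equiv\; \Delta_S*(\bar a*\bar b)*\Delta_T .
\]
Finally, for a genuine smooth $s$-fibred density $c$ on a bisubmersion, the convolutions $\Delta_S*c$ and $c*\Delta_T$ are supported on $S\srtimes(\,\cdot\,)$, resp.\ $(\,\cdot\,)\srtimes T$, where they agree with translates of $c$, hence are again smooth $s$-fibred densities by Lemma~\ref{lem:translates}(b),(d); because $\cN_s$ is a two-sided ideal (Proposition~\ref{prop:convolution_algebra}) this passes to the quotient, so the right-hand side above lies in $C_s^\infty(\cF;|\Omega_s|)$, and therefore so does $a*b$. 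I expect the main obstacle to be exactly this last step: making rigorous the identification of the bisubmersion/distribution translations and their local-unit behaviour with operations in the quotient algebra $\cE'_s(\cF)$; everything else is bookkeeping with the explicit translation formulas and an appeal to Lemma~\ref{lem:smooth_ideal2}.
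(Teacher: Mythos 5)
Your setup matches the paper's proof: you translate $V$ by $S^\rmt$ and $W$ by $T^\rmt$ (your $V'$ and $W'$ are exactly $V^{S^\rmt}$ and $W_{T^\rmt}$), observe that $S$ and $T$ become identity bisections, apply Lemma~\ref{lem:smooth_ideal2}, and use Lemma~\ref{lem:translates}(a),(c) to identify $a^{S^\rmt}*b_{T^\rmt}$ with a translate of $a*b$. (Your preliminary case split on $s_V(v)\neq r_W(w)$ is harmless but unnecessary, since Lemma~\ref{lem:smooth_ideal2} already handles disjoint supports internally.) Up to this point the argument is the paper's.

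Where you diverge is the final ``transfer back'' step, and here your detour is both unnecessary and the weakest part of your argument. The paper concludes directly from Lemma~\ref{lem:translates}(b),(d): translation by a bisection leaves the underlying space, the $s$-fibres, and the distributions on them literally unchanged --- only the labelling of the fibres via the source map is altered --- so being a smooth $s$-fibred density, and more generally being $\cN_s$-equivalent to one (translation commutes with pushforward along morphisms, since a morphism of bisubmersions between translates is the same map as a morphism between the originals), passes back and forth through translation with no further work. Your proposed mechanism instead rests on two unproved claims: that $a\equiv\Delta_S*\bar a$ and $b\equiv\bar b*\Delta_T$ modulo $\cN_s$, and that the Dirac distribution on an identity bisection is a local unit modulo $\cN_s$. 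Neither is available in the paper, and both are genuinely nontrivial: e.g.\ to show $c\Delta_{S_0}*a\equiv a$ for an identity bisection $S_0\subset V_0$ you must produce a morphism of bisubmersions from an \emph{open} neighbourhood of $S_0\circ U$ in $V_0\circ U$ down to $U$ extending $\pr_U$ (the set $S_0\circ U$ itself is not open, so $\pr_U|_{S_0\circ U}$ does not directly witness an $\cN_s$-relation); this requires the local normal form for bisubmersions near identity bisections (Proposition 2.10 of \cite{AS1}) all over again. You correctly sense that there is a subtlety in untranslating a statement that only holds modulo $\cN_s$, but the resolution is the observation above about translation commuting with pushforwards, not an approximate-unit argument. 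As written, your proof has a gap exactly at the step you flag as ``the main obstacle.''
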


\begin{proof}
 Let $S \subset V$ be a local bisection passing through $v$.  Note that $S^\rmt$ is a local bisection of $V^\rmt$, so we may consider the left translate $V^{S^\rmt}$ of $V$ by $S^\rmt$.
 In this bisubmersion, the set $S$ is an identity bisection, since
 \[
  r_{V^{S^\rmt}}(z) = \Phi_{S^\rmt}\circ r_V(z) = s_V(z) = s_{V^{S^\rmt}}(z)
 \]
 for all $z\in S$.  
 
 Likewise, if $T\subset W$ is a bisection passing through $w$, then $T$ is an identity bisection in the right translate $W_{T^\rmt}$.
 
 Therefore, by Lemma \ref{lem:smooth_ideal2}, we can find neighbourhoods $V_v$ of $v$ in $V^{S^\rmt}$ and $W_{v,w}$ of $w$ in $W_{T^\rmt}$ verifying the conditions of Lemma \ref{lem:smooth_ideal2}.  

 Let $a\in C_s^\infty(V;|\Omega_s|)$ with $\supp(a) \subseteq V_v$ and $b\in\cE'_s(W)$ with $\supp(b) \subseteq W_{v,w}$.  By Lemmas \ref{lem:smooth_ideal2} and \ref{lem:translates} (a) and (c) we have 
 \[
  a^{S^\rmt} * b_{T^\rmt} = ((a*b)^{S^\rmt})_{T^\rmt} \in C_s^\infty(\cF;|\Omega_s|).
 \]

By Lemma \ref{lem:translates} (b) and (d), this implies $a*b \in C_s^\infty(\cF;|\Omega_s|)$, as claimed.
\end{proof}

\begin{proof}[Proof of Theorem \ref{thm:smooth_ideal}]
 Let $a\in C^\infty_s(V;|\Omega_s|)$ and $b\in \cE'_s(W)$ for some $V,W \in \cU_\hol$.  

 For each $v\in V$, pick an open neighbourhood $V_v$ of $v$ as in Lemma \ref{lem:smooth_ideal3}.  Using a smooth locally finite partition of unity subordinate to the cover $(V_v)_{v\in V}$, we can reduce to the case where $a$ is supported on $V_v$ for some $v\in V$.

 Next, for each $w\in W$, pick an open neighbourhood $W_{v,w}$ of $w$ as in Lemma \ref{lem:smooth_ideal3}.  Again, using a partition of unity we can reduce to the case where $b$ is supported in $W_{v,w}$ for some $w\in W$.  Then Lemma \ref{lem:smooth_ideal3} completes the proof.
\end{proof}

\section{Transverse distributions}
\label{sec:proper}

We now want to consider Schwartz kernels
which are both $r$- and $s$-fibred.  As defined above, the spaces $\cE'_r(U)$ and $\cE'_s(U)$ are not comparable, so we must put both of them into the usual space of distributions $\cD'(U)$. 

Throughout this section, we will fix a choice of nowhere-vanishing smooth $1$-density on the base space, $\mu\in C^\infty(M;|\Omega|)$.  As we will show, the particular choice of $\mu$ ultimately will not affect the results.

\begin{definition}
	Let $U$ be a bisubmersion and $\mu$ a nowhere-vanishing smooth density on $M$.  We define maps
	\begin{align*}
	 \mu_r : \cE'_r(U) &\hookrightarrow \cD'(U); & \mu_r(a) = \mu \circ a,\\
	 \mu_s : \cE'_s(U) &\hookrightarrow \cD'(U); & \mu_s(b) = \mu \circ b.
	\end{align*}
\end{definition}

The maps $\mu_r$ and $\mu_s$ are continuous injective linear maps. Notice that $\supp \mu_{r}(a) = \supp(a)$ and $\supp \mu_{s}(b) = \supp(b)$. The image $\mu_r(\cE'_r(U)) \subseteq \cD'(U)$ coincides with the space of distributions (with $r$-proper support) transverse to the submersion $r$, as defined in \cite[\S{}1.2.1]{AS2}; see also \cite{LMV}.  This shows that the image is independent of the choice of smooth density $\mu$.  Analogous statements hold for $\mu_s$, of course.

\begin{definition}
 Let $U$ be a bisubmersion.  
 \begin{enumerate}
	\item A distribution in $\cD'(U)$ is called \emph{transverse} if it belongs to $\mu_r(\cE'_r(U))\cap\mu_s(\cE'_s(U))$.  The space of transverse distributions on $U$ is denoted $\Dp'(U)$.
	\item An $r$-fibred distribution $a \in \cE'(U)$ is called \emph{trasnverse} if $\mu_r(a)$ is transverse.  The space of proper $r$-fibred distributions on $U$ is denoted $\cE'_{r,s}(U)$.
	\item An $s$-fibred distribution $b \in \cE'(U)$ is called \emph{transverse} if $\mu_s(b)$ is transverse.  The space of transverse $s$-fibred distributions on $U$ is denoted $\cE'_{s,r}(U)$.
 \end{enumerate}
\end{definition}

Since $r$-fibred distributions have $r$-proper support, and $s$-fibred distributions have $s$-proper support, we see that transverse distributions of any kind have proper support in the sense of Definition \ref{def:proper}.

Let us write $\Cp^\infty(U)$ for the space of properly supported functions on $U$ and $\Cp^\infty(U;E)$ for the space of properly supported sections of any bundle $E$ over $U$.
Note that
\[
 \mu_r(\Cp^\infty(U;|\Omega_r|)) = \Cp^\infty(U;|\Omega|) = \mu_s(\Cp^\infty(U;|\Omega_s|)),
\]
so that properly supported smooth $r$-fibred densities are automatically transverse as $r$-fibred distributions, and likewise for properly supported smooth $s$-fibred densities.

\begin{lemma}
	\label{lem:proper_convolution}
	Let $U$ and $V$ be bisubmersions and let $a\in \cE'_{r,s}(U)$ and $b\in \cE'_{r,s}(U)$ be transverse $r$-fibred distributions, so that there exist $\tilde{a}, \tilde{b}\in \cE'_s(U)$ with $\mu_r(a)=\mu_s(\tilde{a})$ and $\mu_r(b) = \mu_s(\tilde{b})$.  Then
	\[
	 \mu_r(a*b) = \mu_s(\tilde{a}*\tilde{b}).
	\]
	In particular, the convolution product of two transverse $r$-fibred distributions is again transverse, and likewise for transverse $s$-fibred distributions.
\end{lemma}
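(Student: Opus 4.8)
The plan is to verify the identity $\mu_r(a*b)=\mu_s(\tilde a*\tilde b)$ directly: both sides are distributions on $U\circ V$, and I would show that for every test function $\phi\in C^\infty_c(U\circ V)$ each side reduces to the same expression, namely $\int_M\langle\tilde a_y\otimes b^y,\phi\rangle\,\mu(y)$, where $\tilde a_y\otimes b^y$ is the tensor product distribution on the closed submanifold $s_U^{-1}(y)\times r_V^{-1}(y)$ of $U\circ V$ (this submanifold being $(s_U\circ\pr_U)^{-1}(y)$, with $\tilde a_y$ acting in the $U$-variable and $b^y$ in the $V$-variable).

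First I would unwind the left-hand side. By Definition \ref{def:convolution} together with the explicit fibrewise description of the pullback in Definition \ref{def:pullback}, one has $(a*b,\phi)=(a,\Phi)$, where $\Phi:=(s_U^*b,\phi)\in C^\infty(U)$ is given by $\Phi(u)=\langle b^{s_U(u)},\phi(u,\cdot)\rangle$. A short support check --- using that $s_U^*b$ has $\pr_U$-proper support and that $\phi$ is compactly supported --- shows $\Phi\in C^\infty_c(U)$. Hence $(\mu_r(a*b),\phi)=(\mu_r(a),\Phi)$, and by the hypothesis $\mu_r(a)=\mu_s(\tilde a)$ this equals $(\mu_s(\tilde a),\Phi)=\int_M\langle\tilde a_y,\Phi|_{s_U^{-1}(y)}\rangle\,\mu(y)$. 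Since $\Phi(u)=\langle b^y,\phi(u,\cdot)\rangle$ for $u\in s_U^{-1}(y)$, Fubini's theorem for compactly supported distributions identifies $\langle\tilde a_y,\Phi|_{s_U^{-1}(y)}\rangle=\langle\tilde a_y\otimes b^y,\phi\rangle$, so $(\mu_r(a*b),\phi)=\int_M\langle\tilde a_y\otimes b^y,\phi\rangle\,\mu(y)$; note the integrand is a compactly supported smooth function of $y$, since it equals $(\tilde a,\Phi)$.

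Running the symmetric computation on the right-hand side --- using $\tilde a*\tilde b=\tilde b\circ r_V^*\tilde a$, the auxiliary function $\Theta:=(r_V^*\tilde a,\phi)\in C^\infty_c(V)$ with $\Theta(v)=\langle\tilde a_{r_V(v)},\phi(\cdot,v)\rangle$, and the hypothesis $\mu_s(\tilde b)=\mu_r(b)$ --- gives $(\mu_s(\tilde a*\tilde b),\phi)=\int_M\langle b^y,\Theta|_{r_V^{-1}(y)}\rangle\,\mu(y)$, and a second application of Fubini turns $\langle b^y,\Theta|_{r_V^{-1}(y)}\rangle$ into the same quantity $\langle\tilde a_y\otimes b^y,\phi\rangle$. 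Thus the two sides agree, which is the claimed identity. Finally, this identity exhibits $\mu_r(a*b)$ as an element of $\mu_r(\cE'_r(U\circ V))\cap\mu_s(\cE'_s(U\circ V))$, so $a*b$ is transverse; the statement for $s$-fibred distributions follows by transposition, using Lemma \ref{lem:convtransp} and the fact that transposition carries transverse $s$-fibred distributions to transverse $r$-fibred ones (which is immediate from the definitions).

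I expect the main work to lie in the bookkeeping rather than in any single deep point: keeping straight the canonical identifications $\pr_U^{-1}(u)\cong r_V^{-1}(s_U(u))$ and its source-side analogue, verifying that the auxiliary functions $\Phi$ and $\Theta$ are genuinely compactly supported (so that the hypotheses $\mu_r(a)=\mu_s(\tilde a)$ and $\mu_r(b)=\mu_s(\tilde b)$, which are equalities of distributions on $U$ resp.\ $V$, may legitimately be paired with them), and checking smoothness and compact support in $y$ of the families that appear under the integral sign. The one genuinely conceptual ingredient is the Fubini step for compactly supported distributions on the submanifold $s_U^{-1}(y)\times r_V^{-1}(y)\subseteq U\circ V$, but this is entirely standard.
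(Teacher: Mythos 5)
Your proposal is correct and takes essentially the same route as the paper: the paper's key step is the identity $\tilde a\circ s_U^*b = b\circ r_V^*\tilde a$, proved by observing that both sides are fibred over the middle submersion $q(u,v)=s_U(u)=r_V(v)$ and restrict on each $q$-fibre to the tensor product $\tilde a_y\otimes b^y$ --- which is precisely your Fubini step, carried out after pairing with $\mu$ rather than before. The remaining substitutions of $a$ for $\tilde a$ and $b$ for $\tilde b$ via the hypotheses are identical in both arguments, and your extra attention to the support of $\Phi$ and $\Theta$ is a welcome (if routine) point the paper glosses over.
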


\begin{proof}
	We first claim that for any $\tilde{a}\in\cE'_s(U)$ and $b\in\cE'_r(V)$ we have $\tilde{a}\circ s_U^*b = b\circ r_V^*\tilde{a}$ as maps $C^\infty(U\circ V) \to C^\infty(M)$.  
	Note that both these maps are $C^\infty(M)$-linear with respect to the `middle' submersion, 
	\begin{align*}
	  q:U\circ V \to M; \qquad q(u,v) = s_U(u) = r_V(V),
	\end{align*}
	that is, we have
	\[
	 (\tilde{a}\circ s_U^*b,(q^*f)\phi) = f(\tilde{a}\circ s_U^*b,\phi)
	\]
	for all $\phi\in C^\infty(U\circ V)$ and $f\in C^\infty(M)$, and similarly for $b\circ r_V^*\tilde{a}$.
	The $q$-fibre at $x\in M$ is $q^{-1}(x) = U_x \times V^x$, and we calculate
	\[
	 (\tilde{a}\circ s_U^*b, \phi)(x) 
	  = (\tilde{a}_x \otimes b^x, \phi|_{U_x \times V^x})
	  = (b\circ r_V^*\tilde{a}, \phi)(x),
	\]
	which proves the claim.
	
	Finally, we obtain
	\begin{align*}
	 \mu_r(a*b) 
	  &= \mu\circ a \circ s_U^*b
	  = \mu\circ\tilde{a}\circ s_U^*b \\
	  &\qquad = \mu\circ b\circ r_V^*\tilde{a}
	  = \mu\circ \tilde{b}\circ r_V^*\tilde{a}
	  = \mu_s(\tilde{a}\circ\tilde{b}).
	\end{align*}
\end{proof}

\begin{definition}
	We define $\cE'_{r,s}(\cF)$ to be the subspace of $\cE'_r(\cF)$ consisting of classes of locally finite sums of transverse $r$-fibred distributions on bisumbmersions in $\cU_\hol$, and $\cE'_{s,r}(\cF)$ as the subspace of $\cE'_s(\cF)$ consisting of classes of  locally finite sums of transverse $s$-fibred distributions on bisumbmersions in $\cU_\hol$.
\end{definition}

\begin{ex}
	The pseudodifferential kernels on a singular foliation $\cF$ defined in \cite{AS2} also define elements of $\cE'_{r,s}(\cF)$.  This follows from Remark \ref{rmk:PsiDOs} and \cite[Proposition 1.3]{AS2}.
\end{ex}

\begin{prop}
	The subspace $\cE'_{r,s}(\cF)$ is a closed subalgebra of $\cE'_r(\cF)$ and $\Cp^\infty(\cF;|\Omega_r|)$ is a two-sided ideal in $\cE'_{r,s}(\cF)$.  
	
	Similarly for $\cE'_{s,r}(\cF)$ and $\Cp^\infty(\cF;|\Omega_s|)$.
\end{prop}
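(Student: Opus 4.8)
The plan is to reduce the statement to facts that have already been established, mostly Lemma~\ref{lem:proper_convolution} and Theorem~\ref{thm:smooth_ideal}, together with a routine check that transversality is a closed condition.

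\medskip

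\textbf{Closedness of $\cE'_{r,s}(\cF)$.} First I would show that, on a single bisubmersion $U$, the subspace $\cE'_{r,s}(U)\subseteq\cE'_r(U)$ is closed. By definition $a\in\cE'_{r,s}(U)$ iff $\mu_r(a)\in\mu_s(\cE'_s(U))$ inside $\cD'(U)$; since $\mu_r$ and $\mu_s$ are continuous injective linear maps with closed images (the images being exactly the distributions transverse to $r$, resp.\ to $s$, with proper support, as recalled just before the definition of transverse distributions), the preimage $\mu_r^{-1}(\mu_s(\cE'_s(U)))$ is closed in $\cE'_r(U)$. Passing to the atlas, $\cE'_{r,s}(\cU_\hol)$ is the subspace of $(a_U)$ with each $a_U$ transverse, which is closed in $\cE'_r(\cU_\hol)$ for the topology described in Section~\ref{sec:ErF} (convergence means uniform $r$-local finiteness plus $a_{i,U}\to a_U$ for each $U$, and transversality of each limit follows from the single-bisubmersion case). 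Finally $\cE'_{r,s}(\cF)$ is by definition the image of a closed subspace under the quotient map; I would note that it is in fact closed because $\cN_r\subseteq\cE'_{r,s}(\cU_\hol)$ (each generator $a-\pi_*a$ is transverse: $a$ is transverse by hypothesis on the generators we care about, and $\pi_*$ preserves transversality since $\mu_r(\pi_*a)=\mu\circ\pi_*a$ is again transverse to $r$ by the functoriality of pushforward for both $r$- and $s$-fibred pictures — alternatively one restricts attention to generators with $a$ transverse, which is harmless for computing the closure inside $\cE'_{r,s}$), so $\cE'_{r,s}(\cF)=\cE'_{r,s}(\cU_\hol)/\cN_r$ is a quotient of Fr\'echet-type spaces by a closed subspace, hence carries the subspace topology from $\cE'_r(\cF)$ and is closed in it.

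\medskip

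\textbf{Subalgebra.} This is immediate from Lemma~\ref{lem:proper_convolution}: if $a\in\cE'_{r,s}(U)$ and $b\in\cE'_{r,s}(V)$, then $\mu_r(a*b)=\mu_s(\tilde a*\tilde b)$ for suitable $\tilde a\in\cE'_s(U)$, $\tilde b\in\cE'_s(V)$, so $a*b$ is again a transverse $r$-fibred distribution. By bilinearity and $r$-local finiteness this passes to locally finite sums over $\cU_\hol$, and then descends to the quotient $\cE'_r(\cF)$, showing $\cE'_{r,s}(\cF)\cdot\cE'_{r,s}(\cF)\subseteq\cE'_{r,s}(\cF)$.

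\medskip

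\textbf{Two-sided ideal.} We already know from Theorem~\ref{thm:smooth_ideal} that $C^\infty_r(\cF;|\Omega_r|)$ is a right ideal in $\cE'_r(\cF)$; intersecting with $\cE'_{r,s}(\cF)$ (and using that properly supported smooth $r$-fibred densities are automatically transverse, as noted after Lemma~\ref{lem:proper_convolution}) gives that $\Cp^\infty(\cF;|\Omega_r|)$ is a right ideal in $\cE'_{r,s}(\cF)$. For the left ideal property I would argue by transposition: the transpose is an anti-isomorphism $\cE'_r(\cF)\to\cE'_s(\cF)$ (Proposition~\ref{prop:convolution_algebra}(b)) which restricts to an anti-isomorphism $\cE'_{r,s}(\cF)\to\cE'_{s,r}(\cF)$ (transposition on a bisubmersion swaps $\mu_r$ and $\mu_s$, hence preserves transversality) and carries $\Cp^\infty(\cF;|\Omega_r|)$ onto $\Cp^\infty(\cF;|\Omega_s|)$. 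Since Theorem~\ref{thm:smooth_ideal} gives that $C^\infty_s(\cF;|\Omega_s|)$ is a \emph{left} ideal in $\cE'_s(\cF)$, intersecting with $\cE'_{s,r}(\cF)$ shows $\Cp^\infty(\cF;|\Omega_s|)$ is a left ideal there, and transposing back shows $\Cp^\infty(\cF;|\Omega_r|)$ is a left ideal in $\cE'_{r,s}(\cF)$. Combined with the right ideal property, it is two-sided; the statement for $\cE'_{s,r}(\cF)$ and $\Cp^\infty(\cF;|\Omega_s|)$ is obtained the same way (or simply by applying transposition to everything just proved).

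\medskip

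\textbf{Main obstacle.} The genuinely substantive input — that convolution preserves transversality — is already done in Lemma~\ref{lem:proper_convolution}, and the ideal property rests on Theorem~\ref{thm:smooth_ideal}, so the only real work left is bookkeeping. The subtlest point I expect is verifying that $\cE'_{r,s}(\cF)$ is \emph{closed} in $\cE'_r(\cF)$ rather than merely a subspace: one must check that the defining condition (being a locally finite sum of transverse kernels) survives passing to limits in the somewhat non-standard topology on $\cE'_r(\cU_\hol)$, and that it is compatible with the quotient by $\cN_r$ in the sense that a class has a transverse representative iff every sufficiently nice representative is transverse. I would handle this by the argument sketched above: reduce to the single-bisubmersion statement, where closedness of the image of $\mu_s$ in $\cD'(U)$ is exactly the content of the identification with $r$-transverse distributions recalled before Definition~\ref{def:proper} of transverse distributions.
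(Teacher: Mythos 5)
Your treatment of the subalgebra property (via Lemma \ref{lem:proper_convolution}) and of the right-ideal property (via Theorem \ref{thm:smooth_ideal}) matches the paper's proof. The genuine gap is in your argument for the \emph{left}-ideal property, which is circular. Transposition is an anti-isomorphism, so it merely interchanges the two one-sided statements of Theorem \ref{thm:smooth_ideal}: what that theorem actually establishes on the $s$-side is that $c*b\in C^\infty_s(\cF;|\Omega_s|)$ when the \emph{smooth} factor $c$ sits on the left of the $s$-convolution, and applying ${}^\rmt$ to this (via $(c*b)^\rmt=b^\rmt*c^\rmt$) is exactly how the paper already derives the $r$-side statement $a*d\in C^\infty_r(\cF;|\Omega_r|)$ for $a\in\cE'_r(\cF)$ and $d$ smooth. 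Your chain ``intersect the $s$-statement with $\cE'_{s,r}(\cF)$, then transpose back'' therefore only reproduces this known inclusion and never yields the missing one, namely $d*a\in\Cp^\infty(\cF;|\Omega_r|)$ for $d\in\Cp^\infty(\cF;|\Omega_r|)$ and $a\in\cE'_{r,s}(\cF)$. No amount of transposing can extract that from Theorem \ref{thm:smooth_ideal} alone, and indeed the two-sidedness fails in $\cE'_r(\cF)$ at large: transversality must enter. The correct step --- and the one the paper takes --- is to invoke Lemma \ref{lem:proper_convolution} a second time: writing $\mu_r(d)=\mu_s(\tilde d)$ with $\tilde d\in\Cp^\infty(\cF;|\Omega_s|)$ and $\mu_r(a)=\mu_s(\tilde a)$, one gets $\mu_r(d*a)=\mu_s(\tilde d*\tilde a)$, and now the smooth factor occupies the position covered by the $s$-half of Theorem \ref{thm:smooth_ideal}, so $\tilde d*\tilde a$ is a properly supported smooth $s$-fibred density and hence $d*a\in\Cp^\infty(\cF;|\Omega_r|)$.

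Two smaller points. First, your claim that $\cN_r\subseteq\cE'_{r,s}(\cU_\hol)$ is false as stated, since the generators $a-\pi_*a$ range over all $a$, not only transverse ones; your parenthetical fallback is the better route, though the paper itself does not spell out topological closedness, so this portion of your write-up is supplementary. Second, be careful with the left/right labels: the paper's usage is tied to the order of factors in its convolution formulas rather than to the standard ring-theoretic convention, and tracking the position of the smooth factor explicitly (as above) avoids the sign error that led to the circularity.
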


\begin{proof}
 The fact that $\cE'_{r,s}(\cF)$ is closed under convolution follows from Lemma \ref{lem:proper_convolution}.  Theorem \ref{thm:smooth_ideal} shows immediately that $\Cp^\infty(\cF;|\Omega_r|)$ is a right ideal, and also that it is a left ideal if we take advantage of Lemma \ref{lem:proper_convolution}.
\end{proof}

We note the relationship of the transpose map from Definition \ref{def:convolution} with the maps $\mu_r$ and $\mu_s$:
\[
 \mu_r(a)^\rmt = \mu_s(a^\rmt)
\]
It follows that the transpose of a transverse $r$-fibred distribution is a transverse $s$-fibred distribution, and so by Proposition \ref{prop:convolution_algebra}, transposition gives an anti-isomorphism of convolution algebras ${}^\rmt\!:\cE'_{r,s}(\cF) \to \cE'_{s,r}(\cF)$.

We finish this section by recalling that transversality is a consequence of standard wave-front conditions due to H\"ormander; see \cite{Hormander:Volume1}.  For instance, a proof of the following result can be found \cite{LMV}.

\begin{prop}[\cite{Hormander:Volume1,LMV}]
	Let $S$ be a bisection in a bisubmersion $U$, and suppose that $a\in\cD'(U)$ is a distribution with
	\begin{itemize}
		\item proper support,
		\item singular support contained in $S$,
		\item wavefront set contained in $TS^\perp = \{\eta \in T^*U : (\eta,\xi) = 0 \text{ for } \xi \in TS \}$.
	\end{itemize}
	Then $a$ is a transverse distribution.
\end{prop}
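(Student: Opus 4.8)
The plan is to reduce the statement to the local structure of the distribution near its singular support, where the wavefront condition gives us control, and away from it, where smoothness makes transversality automatic. First I would recall that transversality of $a$ means $\mu_r(a)$ (or rather $a$ itself, after the identification with a distribution via $\mu$) lies in both $\mu_r(\cE'_r(U))$ and $\mu_s(\cE'_s(U))$, i.e.\ $a$ restricts to a smooth family of compactly supported distributions on the $r$-fibres and on the $s$-fibres. By \cite[\S1.2.1]{AS2} (or \cite{LMV}), a distribution with $r$-proper support belongs to $\mu_r(\cE'_r(U))$ precisely when its wavefront set is disjoint from the conormal bundle of the $r$-fibration, $N^*(\ker dr) = (\ker dr)^\perp$; that is, when $\mathrm{WF}(a) \cap r^*(T^*M) = \emptyset$, where $r^*(T^*M) \subseteq T^*U$ denotes the pullback of covectors on the base. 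Likewise for $s$. So it suffices to verify these two wavefront disjointness conditions.

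The key step is the following: on the complement of $S$, the distribution $a$ is smooth, hence its wavefront set there is empty, so the conditions are trivially satisfied away from $S$. Along $S$, the hypothesis gives $\mathrm{WF}(a) \subseteq TS^\perp = N^*S$, the conormal bundle of the submanifold $S$. So I must check that $N^*S \cap r^*(T^*M) = 0$ (the zero section) and $N^*S \cap s^*(T^*M) = 0$, where I am writing $r^*(T^*M)$ for $\{(u,\xi) : \xi \in (\ker d_u r)^\perp\}$. This is where the bisection property of $S$ enters decisively: since $r|_S : S \to M$ is a diffeomorphism onto an open set, the map $dr$ restricts to an isomorphism $T_vS \xrightarrow{\sim} T_{r(v)}M$ for each $v \in S$; combined with the fact that $r$ is a submersion (so $T_vU = T_vS \oplus \ker d_v r$, at least after noting $\dim S = \dim M$ and $T_vS \cap \ker d_v r = 0$), we get $T_vU = T_vS \oplus \ker d_v r$ as a direct sum. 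Dualizing, $T_v^*U = N_v^*(\ker dr) \oplus N_v^*S = r^*(T^*_{r(v)}M) \oplus N_v^*S$, so these two subspaces intersect only in $0$. The identical argument with $s|_S$ a diffeomorphism gives $N^*S \cap s^*(T^*M) = 0$. Hence $\mathrm{WF}(a)$, being contained in $N^*S$ over $S$ and empty elsewhere, meets neither $r^*(T^*M)$ nor $s^*(T^*M)$ away from the zero section, which is exactly the transversality criterion on both sides. The proper support hypothesis supplies the $r$- and $s$-properness needed to land in $\cE'_r(U)$ and $\cE'_s(U)$ rather than merely in $\cD'(U)$.

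The main obstacle, and the point requiring the most care, is the precise bookkeeping of the two transversality conditions in terms of wavefront sets and the identification of ``$a$ transverse to $r$'' with ``$\mathrm{WF}(a) \cap (\ker dr)^\perp = \emptyset$'': one must be careful that the relevant statement from \cite{AS2}/\cite{LMV} already packages the equivalence between the fibred-distribution description and the wavefront description, so that I am not re-proving a regularity theorem for distributions depending smoothly on fibre parameters. Granting that cited equivalence, the only real content left is the elementary linear-algebra fact that a bisection's conormal bundle is transverse to both fibre conormals, which is immediate from $\dim S = \dim M$ and the submersion splitting. I would therefore present the proof as: (i) cite the wavefront characterization of transversality; (ii) observe $\mathrm{WF}(a) \subseteq N^*S$ globally (using emptiness off $S$); (iii) prove the splitting $T^*U|_S = r^*(T^*M) \oplus N^*S = s^*(T^*M) \oplus N^*S$ from the bisection property; (iv) conclude. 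I do not expect any serious analytic difficulty beyond correctly invoking the cited results.
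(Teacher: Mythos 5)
Your argument is correct and is essentially the standard one: the paper itself gives no proof (it defers to H\"ormander and \cite{LMV}), and the cited proof proceeds exactly as you do, via the wavefront-set characterization of transversality to a submersion together with the splitting $T^*U|_S = N^*S \oplus (\ker dr)^\perp = N^*S \oplus (\ker ds)^\perp$ coming from the bisection property. Your bookkeeping of the support conditions ($r$- and $s$-properness from proper support) and the reduction to the complement of $S$ versus the conormal bundle over $S$ are all as intended.
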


\section{The action on generalized functions on the base}\label{sec:actgen}

According to Section \ref{sec:Op}, the properly supported $r$-fibred distributions act on $C^\infty(M)$ and $\Cc^\infty(M)$, while properly supported $s$-fibred distributions act on $\cE'(M)$ and $\cD'(M)$.  If we fix a nowhere vanishing smooth density $\mu$ on $M$, then by Lemma \ref{lem:proper_convolution} we obtain an algebra isomorphism $\mu_s^{-1}\circ\mu_r : \cE'_{r,s}(U) \to \cE'_{s,r}(U)$, and hence an action of $\cE'_{r,s}(\cF)$ on all four of the above spaces.  However, the action of $\cE'_{r,s}(\cF)$ on $\cD'(M)$ which is obtained in this way depends upon the choice of $\mu$.  To obtain a canonical action, we need to work with generalized functions instead of distributions.

\begin{remark}
	This issue wouldn't arise if we had followed the operator algebraists' strategy of using half-densities throughout.   In that case, our convolution algebra would act on the space of half-densities on $M$ instead of functions. The two approaches are of course equivalent once one fixes a smooth density on $M$.
\end{remark}

\begin{definition}
	Let $|\Omega|$ denote the bundle of $1$-densities on $M$.  We write $C^{-\infty}(M)$ for the continuous linear dual of $\Cc^\infty(M;|\Omega|)$, and refer to its elements as \emph{generalized functions} on $M$.  We also write $\Cc^{-\infty}(M) = C^\infty(M;|\Omega|)^*$ for the compactly supported generalized functions.
\end{definition}

The space of smooth functions $C^\infty(M)$ admits a canonical embedding as a dense subspace of $C^{-\infty}(M)$.  
Given a choice of nowhere vanishing $1$-density $\mu$ on $M$, we obtain a linear isomorphism
\[
  C^\infty(M) \to C^\infty(M;|\Omega|); 
  \qquad f \mapsto f\mu,
\]
and this extends by density to an isomorphism
\[
  C^{-\infty}(M) \to \cD'(M)
\]
which we denote formally by $k \mapsto k\mu$ for $k\in C^{-\infty}(M)$.

\begin{definition}
 \label{def:Op2}
 Fix a nowhere-vanishing smooth density $\mu$ on $M$.  Let $a\in \cE'_{r,s}(U)$ be a transverse $r$-fibred density on the bisubmersion $U$, which means there is $\tilde{a}\in\cE'_{s,r}(U)$ with $\mu_s(\tilde{a}) = \mu_r(a)$.  We define the operator $\Op(a)$ on $C^{-\infty}(M)$ by the formula
 \[
  (\Op(a) k)\mu = (\widetilde{\Op}(\tilde{a}))(k\mu) 
 \]
 where $k\in C^{-\infty}(U)$ and $\widetilde{\Op}$ is the representation of $\cE'_{s,r}(U)$ on $\cD'(M)$ defined at the end of Section \ref{sec:Op}.
\end{definition}

\begin{prop}
 \label{prop:generalized_functions}
 The map $\Op:\cE'_{r,s}(\cF) \to \bL(C^{-\infty}(M))$ defined above is independent of the choice of smooth nowhere-vanishing density $\mu$.  It induces a continuous linear representation of $\cE'_{r,s}(\cF)$ on generalized functions which extends the representation $\Op$ on $C^\infty(M)$. 
\end{prop}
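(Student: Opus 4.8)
The plan is to verify the three assertions in turn: independence of $\mu$, continuity and algebra-homomorphism property, and compatibility with the representation on $C^\infty(M)$. For independence, suppose $\mu$ and $\mu'$ are two nowhere-vanishing smooth densities, so $\mu' = g\mu$ for some nowhere-vanishing $g\in C^\infty(M)$. If $a\in\cE'_{r,s}(U)$ with $\mu_r(a)=\mu_s(\tilde a)$ and $\mu'_r(a)=\mu'_s(\tilde a')$, I would first relate $\tilde a'$ to $\tilde a$: since $\mu'\circ a = g\,\mu\circ a = g\,\mu\circ\tilde a$ as distributions on $U$, and on the other hand $\mu'\circ\tilde a' = g\,\mu\circ\tilde a'$, the injectivity of $\mu_s$ (noted just after the definition of $\mu_r,\mu_s$) gives $\tilde a' = (s_U^*g)\,\tilde a$ in the sense that $\tilde a'$ and $\tilde a$ agree after the appropriate multiplication; more precisely the $s$-fibrewise distributions satisfy $(\tilde a')_x = g(x)\,\tilde a_x$ after suitably unwinding. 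Then one computes both sides of $(\Op_\mu(a)k)\mu = (\Op_{\mu'}(a)k)\mu'$ by expanding $\widetilde{\Op}$ through its defining pairing $(\widetilde{\Op}(b)\omega,f)=(\omega,\Op(b^\rmt)f)$ and checking that the factor $g$ (resp.\ $g^{-1}$) coming from the density change and from $\tilde a' = g\tilde a$ cancels. The key point is that the map $k\mapsto k\mu$ intertwines the two pictures precisely because the discrepancy in $\tilde a$ exactly compensates the discrepancy in the identification $C^{-\infty}(M)\to\cD'(M)$.

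For the algebra-representation and continuity properties, I would argue as follows. Fix $\mu$ once and for all. By Lemma \ref{lem:proper_convolution}, the assignment $a\mapsto\tilde a$ (determined by $\mu_r(a)=\mu_s(\tilde a)$) is an algebra homomorphism $\cE'_{r,s}(U)\to\cE'_{s,r}(U)$ in the sense that $\widetilde{a*b} = \tilde a*\tilde b$; passing to locally finite sums over $\cU_\hol$ and to the quotients by $\cN_r$, $\cN_s$ (using that $\mu_r,\mu_s$ carry $\cN_r$ to $\cN_s$ compatibly, which follows from naturality of pushforward with respect to multiplication by $\mu$), this gives an algebra isomorphism $\mu_s^{-1}\circ\mu_r:\cE'_{r,s}(\cF)\to\cE'_{s,r}(\cF)$. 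Since $\widetilde{\Op}:\cE'_{s,r}(\cF)\to\bL(\cD'(M))$ is an algebra representation (as observed at the end of Section \ref{sec:Op}, via Proposition \ref{prop:convolution_algebra}) and $k\mapsto k\mu$ is a linear isomorphism $C^{-\infty}(M)\to\cD'(M)$, the composite $\Op$ is a representation of $\cE'_{r,s}(\cF)$ on $C^{-\infty}(M)$. Continuity follows because each constituent map ($a\mapsto\tilde a$, $\widetilde{\Op}$, and conjugation by $k\mapsto k\mu$) is continuous for the topologies in play, together with the $r$-local finiteness that makes the sum over $\cU_\hol$ converge.

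For compatibility with the $C^\infty(M)$-representation, I would note that $C^\infty(M)\hookrightarrow C^{-\infty}(M)$ densely, and that for $a$ a transverse $r$-fibred density and $f\in C^\infty(M)$ one has, by definition of $\widetilde{\Op}$ and the fact that $\mu_r(a)=\mu_s(\tilde a)$, the identity $(\Op(a)f)\mu = \widetilde{\Op}(\tilde a)(f\mu)$; unwinding the defining pairing of $\widetilde{\Op}$ against a test function $h\in C^\infty(M)$ and using $(\tilde a)^\rmt = a^\rmt$ together with the relation $\mu_r(a)^\rmt = \mu_s(a^\rmt)$, one recovers exactly $(\Op(a)f,h\mu) = (a, s_U^* f)\cdot(\text{paired with }h\mu)$, i.e.\ the operator of Proposition \ref{prop:Op}. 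So the new $\Op$ restricts to the old one on the dense subspace $C^\infty(M)$, and by continuity this pins down the extension uniquely.

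\textbf{Main obstacle.} I expect the delicate step to be the independence of $\mu$: one has to track carefully how the auxiliary $s$-fibred distribution $\tilde a$ transforms under $\mu\rightsquigarrow g\mu$ and verify that the scalar factor $g$ (which appears fibrewise, and is pulled back by $s_U$ on the bisubmersion) cancels precisely against the factor introduced by changing the identification $C^{-\infty}(M)\cong\cD'(M)$. The bookkeeping is complicated by the fact that $\tilde a$ is only characterized implicitly and lives on $U$ rather than on $M$, so the cancellation must be checked at the level of the fibrewise distributions $\tilde a_x\in\cE'(s_U^{-1}(x))$ and then reassembled; once that fibrewise identity $\tilde a'_x = g(x)\tilde a_x$ is in hand, the rest is a routine chase through the definitions of $\widetilde{\Op}$ and the pairing with $\Cc^\infty(M;|\Omega|)$.
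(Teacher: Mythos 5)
Your second paragraph (the algebra/continuity part) and the computation sketched in your third paragraph are essentially correct and match the paper. The genuine problem is in your treatment of $\mu$-independence, which you yourself flag as the main obstacle and leave incomplete — and the intermediate identity you propose to use there is wrong. Writing $\mu'=g\mu$, the relation $\mu'_r(a)=\mu'_s(\tilde a')$ unwinds, using the $r$-linearity of $a$ and the $s$-linearity of $\tilde a'$ together with injectivity of $\mu_s$, to $(s_U^*g)\cdot\tilde a' = (r_U^*g)\cdot\tilde a$, i.e.\ $\tilde a' = \frac{r_U^*g}{s_U^*g}\cdot\tilde a$. Fibrewise this reads $(\tilde a')_x = g(x)^{-1}\,\bigl(g\circ r_U\bigr)\big|_{s_U^{-1}(x)}\cdot\tilde a_x$, where $g\circ r_U$ is generally \emph{non-constant} on the $s$-fibre; it is not the scalar relation $(\tilde a')_x = g(x)\tilde a_x$ you assert. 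With the correct law the cancellation does go through (the factor $r_U^*g$ recombines with $r_U^*f$ and the factor $(s_U^*g)^{-1}$ cancels the $g$ in $k\mu'=kg\mu$), but as written your "routine chase" rests on a false premise, so this step is a gap.

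Note that the paper sidesteps this bookkeeping entirely, by an argument that is already implicit in your third paragraph: for $k\in C^\infty(M)$ one shows that \emph{both} the new definition $(\Op(a)k)\mu=\widetilde{\Op}(\tilde a)(k\mu)$ and the old formula $\Op(a)k=(a,s_U^*k)$ of Proposition \ref{prop:Op} pair with $f\in C^\infty(M)$ to give the same quantity $(\mu_s(\tilde a),(r_U^*f)(s_U^*k))$, using only the $C^\infty(M)$-linearity of $a$ (as $r$-fibred) and of $\tilde a^\rmt$. Since the old formula visibly does not involve $\mu$, the new definition is $\mu$-independent on the dense subspace $C^\infty(M)$, and hence on all of $C^{-\infty}(M)$ by continuity of the two (continuous, being transposes) operators being compared. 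If you reorganize your proof so that the compatibility computation comes first, the independence of $\mu$ falls out for free and the delicate transformation law for $\tilde a$ is never needed.
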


\begin{proof}
 Let $k\in C^\infty(M)$.  If we use the definition of $\Op(a)k$ from Definition \ref{def:Op2}, we get, for all $f\in C^\infty(M)$,
 \begin{align*}
  ((\Op(a)k)\mu, f)
   &= (\widetilde{\Op}(\tilde{a})(k\mu), f) \\
   &= (k\mu, \Op(\tilde{a}^\rmt)f) \\
   &= \int_{x\in M} (\tilde{a}^\rmt,r_U^*f)(x) k(x) \mu(x) \\
   &= \int_{x\in M} (\tilde{a}^\rmt,(r_U^*f)(s_U^*k))(x) \mu(x) \\
   &= (\mu_s(\tilde{a}), (r_U^*f)(s_U^*k)),
 \end{align*}
 where the second last equality uses the $C^\infty(M)$-linearity of $\tilde{a}^t$ as an $r$-fibred distribution.
 On the other hand, using the definition of $\Op(a)k$ from Propostion \ref{prop:Op} gives
 \begin{align*}
  ((\Op(a)k)\mu, f)
   &= \int_{x\in M} (a,s_U^*k)(x) \mu(x) f(x) \\
   &= \int_{x\in M} (a,(r_U^*f)(s_U^*k))(x) \mu(x) \\
   &= (\mu_s(\tilde{a}), (r_U^*f)(s_U^*k)),
 \end{align*}
 where the second equality uses the $C^\infty(M)$-linearity of $a$ as an $r$-fibred distribution.  This proves that the two definitions of $\Op(a)k$ agree when $k\in C^\infty(M)$.  The definition of $\Op(a)k$ from Proposition \ref{prop:Op} clearly does not depend on the choice of $\mu$, so neither does Definition \ref{def:Op2} in this case.  By density, the same must be true for all $k\in C^{-\infty}(M)$.

 For $a\in \cE'_{r,s}(U)$ and $b\in \cE'_{r,s}(V)$, Lemma \ref{lem:proper_convolution} plus the fact that $\widetilde{\Op}$ is an algebra representation of $\cE_{s,r}(\cF)$ gives
	\begin{align*}
	 (\Op(a*b)k)\mu = \widetilde{\Op}(\tilde{a}*\tilde{b})(k\mu)
	   = \widetilde{\Op}(\tilde{a}) \widetilde{\Op}(\tilde{b}) (k\mu)
	   = (\Op(a)\Op(b)k)\mu,
	\end{align*}
 so $\Op$ is indeed a representation.  
\end{proof}

Therefore, representation $\Op$ defines an action of $\cE'_{r,s}(\cF)$ on each of the four spaces $C^\infty(M)$, $\Cc^\infty(M)$, $C^{-\infty}(M)$ and $\Cc^{-\infty}(M)$.

\section{Action on the leaves and their holonomy covers}
 \label{sec:actleaf}
 \label{sec:actholcov}

We conclude by describing the natural actions of the algebra $\cE'_{r,s}(\cF)$ on the spaces of functions and generalized functions on a leaf $L$ of the foliation and its holonomy cover $\tilde{L}$.

Recall that $H(\cF)$ denotes the holonomy groupoid of $(M,\cF)$, see Section \ref{sec:holonomy_groupoid}. Let $L=L_x$ be the leaf through $x \in M$.
Note $L$ is a manifold in its own right, and we equip it with its manifold topology and not its topology as a subspace of $M$.  We write $\iota_L : L \to M$ for the inclusion.
Then 
\[
 H(\cF)_L := L\;{}_{\iota_L}\!\!\times_r H(\cF)
\]
is the fibre of the holonomy groupoid over the leaf $L$.  We have a natural identification $H(\cF)_L = r^{-1}(L) = s^{-1}(L)$ as a set, but the definition as a fibred product gives it a manifold topology.  In fact, from
\cite{Debord2013}, we know that $H(\cF)_L$ is a transitive Lie groupoid with base $L$.

Now let $U$ be a bisubmersion in the path holonomy atlas.  We write
\[
U_L = L\;{}_{\iota_L}\!\!\times_{r_U} U,
\]
which, similarly, can be identified with $s_U^{-1}(L) = r_U^{-1}(L)$ as a set, but has a manifold topology.  The
quotient map $q_U:U \to H(\cF)$ induces a submersive map between manifolds $q_{U_L} = \id \times q_U:U_L \to H(\cF)_L$, which commutes with the $r$ and $s$ maps on $U_L$ and $H(\cF)_L$, respectively.

	\begin{definition}
		For $a\in\cE'_r(U)$, we define the \emph{restriction of $a$ to $H(\cF)_L$} by
		\[
		a|_{H(\cF)_L} := {q_{U_L}}_* (\iota_L^* a) \in \cE'_r(H(\cF)_L).
		\]
	\end{definition}
	
The pullback and pushforward operations here are as in Section \ref{sec:fibdistrsubm}
	
	\begin{prop}
		\label{prop:a_L}
		Let $a\in\cE'_r(U)$ and $b\in\cE'_r(V)$ be $r$-fibred distributions on a pair of bisubmersions.
		\begin{enumerate}
			\item If $\pi:U \to U'$ is a morphism of bisubmersions, then $(\pi_*a)|_{H(\cF)_L} = a|_{H(\cF)_L}$.  
			\item We have $(a*b)|_{H(\cF)_L} = a|_{H(\cF)_L} * b|_{H(\cF)_L}$.
		\end{enumerate}
		Consequently, the family of maps $a \mapsto a|_{H(\cF)_L}$ on each bisubmersion yield a well-defined topological algebra map $\cE'_r(\cF) \to \cE'_r(H(\cF)_L)$, where the latter is the convolution algebra of Lescure-Manchon-Vassout \cite{LMV} for the Lie groupoid $H(\cF)_L$.  Moreover, this map sends $C^\infty_r(\cF;|\Omega_r|)$ to $C^\infty_r(H(\cF)_L;|\Omega_r|)$.
	\end{prop}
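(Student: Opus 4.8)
\emph{Proof plan.}
The restriction $a\mapsto a|_{H(\cF)_L}$ is, by its definition, the composite of pullback along the base map $\iota_L\colon L\to M$ (Definition~\ref{def:pullback}) followed by pushforward along the submersive morphism $q_{U_L}\colon U_L\to H(\cF)_L$ (Definition~\ref{dfn:pushforward}). The plan is to prove (1) and (2) by playing these two operations against the functorial properties already established, and then to assemble the global statement. Before that, I record two geometric facts. First, for any morphism of bisubmersions $\pi\colon U\to U'$ one has $q_{U'}\circ\pi=q_U$, because $\pi$ is in particular a local morphism sending $u$ to $\pi(u)$, so $u\sim\pi(u)$. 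Second, since the geometric property of bisubmersions in item (a) of \S\ref{sec:bisubm} forces $s_U(u)=r_V(v)\in L$ and $s_V(v)\in L$ whenever $r_U(u)\in L$, there is a canonical identification $(U\circ V)_L\cong U_L\circ V_L$, where the source map of $U_L$ is now understood to take values in $L$; under this identification the quotient map $q_{(U\circ V)_L}$ corresponds to $m_L\circ(q_{U_L}\times q_{V_L})$, with $m_L$ the multiplication of the Lie groupoid $H(\cF)_L$, because the atlas quotient is compatible with composition, $q_{U\circ V}=m_{H(\cF)}\circ(q_U\times q_V)$ \cite{AS1}.

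For (1): applying Lemma~\ref{lem:pullback-functoriality}(1) with the base map $p=\iota_L$ gives $\iota_L^*(\pi_*a)=(\id_L\times\pi)_*(\iota_L^*a)$, where $\id_L\times\pi\colon U_L\to U'_L$. Since pushforward is functorial and $q_{U'_L}\circ(\id_L\times\pi)=q_{U_L}$ (a consequence of $q_{U'}\circ\pi=q_U$), we obtain
\[
 (\pi_*a)|_{H(\cF)_L}=(q_{U'_L})_*(\id_L\times\pi)_*(\iota_L^*a)=(q_{U_L})_*(\iota_L^*a)=a|_{H(\cF)_L}.
\]

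Part (2) is the crux. The first step is to show that $\iota_L^*$ is multiplicative, i.e. $\iota_L^*(a*b)=\iota_L^*a*\iota_L^*b$ as elements of $\cE'_r((U\circ V)_L)=\cE'_r(U_L\circ V_L)$. Writing $a*b=a\circ s_U^*b$ and using the defining property \eqref{eq:pullback_distribution} of the pullback, for $\psi\in C^\infty(U\circ V)$ one computes
\[
 \bigl(\iota_L^*(a*b),\,\pr^*\psi\bigr)=\iota_L^*\bigl(a,(s_U^*b,\psi)\bigr)=\bigl(\iota_L^*a,\ \pr_U^*(s_U^*b,\psi)\bigr),
\]
where $\pr\colon (U\circ V)_L\to U\circ V$ and $\pr_U\colon U_L\to U$ are the projections; a short fibrewise computation, using that the $s_{U_L}$-fibres in $U_L$ are the $s_U$-fibres and that $(\iota_L^*b)_\ell=b_{\iota_L(\ell)}$, identifies $\pr_U^*(s_U^*b,\psi)$ with $(s_{U_L}^*(\iota_L^*b),\,\pr^*\psi)$, so the right-hand side equals $\bigl(\iota_L^*a\circ s_{U_L}^*(\iota_L^*b),\,\pr^*\psi\bigr)=(\iota_L^*a*\iota_L^*b,\,\pr^*\psi)$, and uniqueness in \eqref{eq:pullback_distribution} gives the claim. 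Granting multiplicativity, apply $(q_{U_L}\times q_{V_L})_*$ and Lemma~\ref{lem:convolution_integration_compatible}:
\[
 (q_{U_L}\times q_{V_L})_*(\iota_L^*a*\iota_L^*b)=\bigl((q_{U_L})_*(\iota_L^*a)\bigr)*\bigl((q_{V_L})_*(\iota_L^*b)\bigr)=a|_{H(\cF)_L}*b|_{H(\cF)_L},
\]
and then apply $(m_L)_*$, using functoriality of pushforward, the identity $m_L\circ(q_{U_L}\times q_{V_L})=q_{(U\circ V)_L}$, and multiplicativity of $\iota_L^*$; this yields
\[
 (a*b)|_{H(\cF)_L}=(m_L)_*\bigl(a|_{H(\cF)_L}*b|_{H(\cF)_L}\bigr),
\]
which is exactly the Lescure--Manchon--Vassout convolution of $a|_{H(\cF)_L}$ and $b|_{H(\cF)_L}$ on the Lie groupoid $H(\cF)_L$, as recalled in the first Example following Proposition~\ref{prop:convolution_algebra}. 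The main obstacle is precisely the multiplicativity of $\iota_L^*$: it is a diagram chase with the universal property of pullback, but the bookkeeping — the identification $(U\circ V)_L\cong U_L\circ V_L$, and the fact that $s_{U_L}$ must be regarded as valued in $L$ so that $\iota_L^*b$ can be pulled back along it — needs to be handled carefully.

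Finally, for the global statement: extending bisubmersionwise, $\bfa=(a_U)\mapsto\sum_U a_U|_{H(\cF)_L}$ remains $r$-locally finite, because the $r$-support of $a_U|_{H(\cF)_L}$ lies in $\iota_L^{-1}(r_U(\supp a_U))$ and $\iota_L$ carries compacta of $L$ to compacta of $M$; this produces a continuous linear map $\cE'_r(\cU_\hol)\to\cE'_r(H(\cF)_L)$, being built from the continuous operations of pullback and pushforward. By (1) it kills all generators $a-\pi_*a$ of the ideal $\cN_r$, hence by continuity the whole closed ideal $\cN_r$, so it descends to $\cE'_r(\cF)$; by (2) and bilinearity it is an algebra homomorphism into the convolution algebra of \cite{LMV}. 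Lastly, $\iota_L^*$ sends $C^\infty_r(U;|\Omega_r|)$ into $C^\infty_r(U_L;|\Omega_r|)$ (it is the restriction of a smooth $r$-fibred density to the $r$-fibres over $L$, and $r$-properness of the support is preserved), and $(q_{U_L})_*$ sends $C^\infty_r(U_L;|\Omega_r|)$ into $C^\infty_r(H(\cF)_L;|\Omega_r|)$ by Lemma~\ref{lem:smooth_integration}; hence the map carries $C^\infty_r(\cF;|\Omega_r|)$ into $C^\infty_r(H(\cF)_L;|\Omega_r|)$.
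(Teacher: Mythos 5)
Your proposal is correct and follows essentially the same route as the paper: part (1) via Lemma \ref{lem:pullback-functoriality} together with $q_{U'_L}\circ(\id\times\pi)=q_{U_L}$, part (2) by establishing multiplicativity of $\iota_L^*$ under the identification $(U\circ V)_L\cong U_L\circ V_L$ and then pushing forward (the paper verifies this multiplicativity with a commuting diagram of pullback squares rather than your fibrewise computation, and compresses your factorization $q_{(U\circ V)_L}=m_L\circ(q_{U_L}\times q_{V_L})$ into a single application of $(q_{U\circ V})_*$), and the global statement by local finiteness, continuity, and Lemma \ref{lem:smooth_integration}. The extra detail you supply is consistent with, and only elaborates on, the paper's argument.
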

	
	\begin{proof}
		For a), note that the morphism $\pi$ induces a map
		\[
		\pi_L = \id\times\pi : L\;{}_{\iota_L}\!\!\times_{r_U} U \to L\;{}_{\iota_L}\!\!\times_{r_{U'}} {U'}.
		\]
		Then Lemma \ref{lem:pullback-functoriality} a) gives
		\[
		(\pi_*a)|_{H(\cF)_L} 
		= {q_{U'_L}}_* (\iota_L^* \pi_*a)
		= {q_{U'_L}}_* (\id\times\pi)_* (\iota_{L}^* a)
		= {q_{U_L}}_* (\iota_{L}^* a) 
		= a|_{H(\cF)_L}.
		\]
		
		For b), consider the commuting diagram
		\[
		\xymatrix@!C=16mm{
			U_L \ar[r]^{\iota_{U_L}} \ar[d]_{s_U} &
			U \ar[d]^{s_U} \\
			L \ar[r]_{\iota_L} &
			M.
		}
		\]
		We can pullback the submersion $r_V:V \to M$ by each of these maps, and we obtain the commuting diagram
		\[
		\xymatrix@!C=16mm{
			U_L \;{}_r\!\! \times_s V_L \ar[r] \ar[d] &
			U \;{}_r\!\! \times_s V \ar[d] \\
			V_L \ar[r] &
			V,
		}
		\]
		with submersions from each of the spaces in the second square the corresponding ones in the first.  From this, we obtain the equality of pullbacks of $r$-fibred distributions
		\[
		\iota_L^* (a*b) = \iota_L^* (a\circ s_U^*b) = (\iota_{L}^*a)\circ ({s_U}_*({\iota_L}_*b)) = (\iota_{L}^*a)* ({\iota_L}_*b).
		\]  
		Applying $(q_{U\circ V})_*$ gives the result.
		
		The final claims follow immediately.
	\end{proof}

	For any Lie groupoid $G\rightrightarrows G^{(0)}$, the algebra $\cE'_r(G)$ of Lescure-Manchon-Vassout \cite{LMV} admits canonical actions on the space of smooth functions $C^\infty(G^{(0)})$ on the base, and also on $C^\infty(G_x)$ for every $s$-fibre $G_x$, with $x\in M$.  Therefore, using the algebra morphism $\cE'_{r}(\cF) \to \cE'_r(H(\cF)_L)$ from Proposition \ref{prop:a_L}, we immediately obtain an  action $\Op_L$ of our singular algebra $\cE'_r(\cF)$ on smooth functions on the leaf $L=H(\cF)_L^{(0)}$ and an action $\Op_{\tilde{L}}$ on the smooth functions on its holonomy cover $\tilde{L} \cong H(\cF)_x$ for any $x\in L$.
	
	Explicitly, the action $\Op_L(a)$ of an $r$-fibred distribution $a\in\cE'_r(\cF)$ on a function $f\in C^\infty(L)$ is given by
	\[
	 \Op_L(a)f = \Op(a|_{H(\cF)_L})f = (a|_{H(\cF)_L}, s_{H(\cF)_L}^*f).
	\]
	In particular, if $f$ happens to be the restriction to $L$ of a function $\tilde{f}\in C^\infty(M)$, we get
	\begin{align*}
	 \Op_L(a)f &= (a|_{H(\cF)_L}, s_{H(\cF)_L}^*\iota_L^*\tilde{f}) \\
	  &= ({q_U}_*(\iota_L^*a), s_{H(\cF)_L}^*\iota_L^*\tilde{f}) \\
	  &= (\iota_L^*a, {q_U}^* s_{H(\cF)_L}^*\iota_L^*\tilde{f}) \\
	  &= (\iota_L^*a, \iota_L^* s_U^* \tilde{f}) \\
	  &= (\Op(a)\tilde{f})|_L,
	\end{align*}
	where $\Op(a)$ is the action of $a$ on $C^\infty(M)$ as defined in Proposition \ref{prop:Op}.  In other words, the action on the base $M$ and the action on the leaf $L$ are compatible in the sense that $\iota_L^*\circ \Op(a) = \Op_L(a)\circ \iota_L^*$.

	Similar constructions yield algebra morphisms $\cE'_s(\cF) \to \cE'_s(H(\cF)_L)$ and $\cE'_{r,s}(\cF) \to \cE'_{r,s}(H(\cF)_L)$.  Following the reasoning described in Definition \ref{def:Op2} and Proposition \ref{prop:generalized_functions}, we obtain that $\cE'_{r,s}(\cF)$ acts on the spaces of functions and generalized functions, $C^\infty(L)$, $\Cc^\infty(L)$, $C^{-\infty}(L)$, $\Cc^{-\infty}(L)$, as well as $C^\infty(\tilde{L})$, $\Cc^\infty(\tilde{L})$, $C^{-\infty}(\tilde{L})$ and $\Cc^{-\infty}(\tilde{L})$.

\bibliographystyle{alpha} 
\bibliography{paper}

\begin{thebibliography}{CGGP92}

\bibitem[ALN07]{AmmLauNis}
Bernd Ammann, Robert Lauter, and Victor Nistor.
\newblock Pseudodifferential operators on manifolds with a {L}ie structure at
  infinity.
\newblock {\em Ann. of Math. (2)}, 165(3):717--747, 2007.

\bibitem[AS09]{AS1}
Iakovos Androulidakis and Georges Skandalis.
\newblock {The holonomy groupoid of a singular foliation}.
\newblock {\em J. Reine Angew. Math.}, 626:1--37, 2009.

\bibitem[AS11a]{AS2}
Iakovos Androulidakis and Georges Skandalis.
\newblock {Pseudodifferential calculus on a singular foliation}.
\newblock {\em J. Noncommut. Geom.}, 5(1):125--152, 2011.

\bibitem[AS11b]{AS3}
Iakovos Androulidakis and Georges Skandalis.
\newblock {The analytic index of elliptic pseudodifferential operators on a
  singular foliation}.
\newblock {\em J. K-Theory}, 8(3):363--385, 2011.

\bibitem[BG88]{BeaGre}
Richard Beals and Peter Greiner.
\newblock {\em Calculus on {H}eisenberg manifolds}, volume 119 of {\em Annals
  of Mathematics Studies}.
\newblock Princeton University Press, Princeton, NJ, 1988.

\bibitem[BS18]{BohSch}
Karsten Bohlen and Elmar Schrohe.
\newblock Getzler rescaling via adiabatic deformation and a renormalized index
  formula.
\newblock {\em J. Math. Pures Appl. (9)}, 120:220--252, 2018.

\bibitem[CGGP92]{ChrGelGloPol}
Michael Christ, Daryl Geller, Pawe{\l} G{\l}owacki, and Larry Polin.
\newblock Pseudodifferential operators on groups with dilations.
\newblock {\em Duke Math. J.}, 68(1):31--65, 1992.

\bibitem[CNQ17]{CarNisQia}
Catarina Carvalho, Victor Nistor, and Yu~Qiao.
\newblock Fredholm criteria for pseudodifferential operators and induced
  representations of groupoid algebras.
\newblock {\em Electron. Res. Announc. Math. Sci.}, 24:68--77, 2017.

\bibitem[C{\^o}m19]{Come:Fredholm}
R{\'e}mi C{\^o}me.
\newblock The {F}redholm property for groupoids is a local property.
\newblock {\em Results Math.}, 74(4):Art. 160, 33, 2019.

\bibitem[Con79]{Connes79}
A.~Connes.
\newblock Sur la th\'{e}orie non commutative de l'integration.
\newblock {\em Lecture Notes in Math.}, 725:19--143, Springer, Berlin, 1979.

\bibitem[CRM09]{CarMon}
Paulo Carrillo-Rouse and Bertrand Monthubert.
\newblock An index theorem for manifolds with boundary.
\newblock {\em C. R. Math. Acad. Sci. Paris}, 347(23-24):1393--1398, 2009.

\bibitem[CS84]{ConnesSkandalis}
A~Connes and G~Skandalis.
\newblock {The longitudinal index theorem for foliations}.
\newblock {\em Publ. Res. Inst. Math. Sci.}, 20(6):1139--1183, 1984.

\bibitem[Deb01]{DebordJDG}
Claire Debord.
\newblock {Holonomy groupoids of singular foliations}.
\newblock {\em J. Diff. Geom.}, 58(3):467--500, 2001.

\bibitem[Deb13]{Debord2013}
Claire Debord.
\newblock {Longitudinal smoothness of the holonomy groupoid}.
\newblock {\em C. R. Math. Acad. Sci. Paris}, 351(15-16):613--616, 2013.

\bibitem[DLN09]{DebLesNis}
Claire Debord, Jean-Marie Lescure, and Victor Nistor.
\newblock Groupoids and an index theorem for conical pseudo-manifolds.
\newblock {\em J. Reine Angew. Math.}, 628:1--35, 2009.

\bibitem[DS14]{DebordSkandalis1}
Claire Debord and Georges Skandalis.
\newblock Adiabatic groupoid, crossed product by {$\Bbb{R}_+^\ast$} and
  pseudodifferential calculus.
\newblock {\em Adv. Math.}, 257:66--91, 2014.

\bibitem[DS15]{DebSka:extensions}
Claire Debord and Georges Skandalis.
\newblock Pseudodifferential extensions and adiabatic deformation of smooth
  groupoid actions.
\newblock {\em Bull. Sci. Math.}, 139(7):750--776, 2015.

\bibitem[DS18]{DebSka:exact_sequences}
Claire Debord and Georges Skandalis.
\newblock Lie groupoids, exact sequences, {C}onnes-{T}hom elements, connecting
  maps and index maps.
\newblock {\em J. Geom. Phys.}, 129:255--268, 2018.

\bibitem[H\"03]{Hormander:Volume1}
Lars H\"{o}rmander.
\newblock {\em The analysis of linear partial differential operators. {I}}.
\newblock Classics in Mathematics. Springer-Verlag, Berlin, 2003.
\newblock Distribution theory and Fourier analysis, Reprint of the second
  (1990) edition [Springer, Berlin; MR1065993 (91m:35001a)].

\bibitem[H{\"o}r67]{Hormander:SoS}
Lars H{\"o}rmander.
\newblock Hypoelliptic second order differential equations.
\newblock {\em Acta Math.}, 119:147--171, 1967.

\bibitem[LMN00]{LauMonNis}
Robert Lauter, Bertrand Monthubert, and Victor Nistor.
\newblock Pseudodifferential analysis on continuous family groupoids.
\newblock {\em Doc. Math.}, 5:625--655, 2000.

\bibitem[LMV17]{LMV}
Jean-Marie Lescure, Dominique Manchon, and St\'{e}phane Vassout.
\newblock About the convolution of distributions on groupoids.
\newblock {\em J. Noncommut. Geom.}, 11(2):757--789, 2017.

\bibitem[LV17]{LesVas}
Jean-Marie Lescure and St\'{e}phane Vassout.
\newblock Fourier integral operators on {L}ie groupoids.
\newblock {\em Adv. Math.}, 320:391--450, 2017.

\bibitem[Mel82]{Melin:preprint}
Anders Melin.
\newblock Lie filtrations and pseudo-differential operators.
\newblock Preprint, 1982.

\bibitem[Mel93]{Melrose:APS}
Richard~B. Melrose.
\newblock {\em The {A}tiyah-{P}atodi-{S}inger index theorem}, volume~4 of {\em
  Research Notes in Mathematics}.
\newblock A K Peters, Ltd., Wellesley, MA, 1993.

\bibitem[Mon03]{Monthubert}
Bertrand Monthubert.
\newblock {Groupoids and pseudodifferential calculus on manifolds with
  corners.}
\newblock {\em J. Funct. Anal.}, 199(1):243--286, 2003.

\bibitem[MP97]{MonPie}
Bertrand Monthubert and Fran\c{c}ois Pierrot.
\newblock Indice analytique et groupo\"{\i}des de {L}ie.
\newblock {\em C. R. Acad. Sci. Paris S\'{e}r. I Math.}, 325(2):193--198, 1997.

\bibitem[Nis16]{Nistor:singular}
Victor Nistor.
\newblock Analysis on singular spaces: {L}ie manifolds and operator algebras.
\newblock {\em J. Geom. Phys.}, 105:75--101, 2016.

\bibitem[Nis19]{Nistor:Desingularuzation}
Victor Nistor.
\newblock Desingularization of {L}ie groupoids and pseudodifferential operators
  on singular spaces.
\newblock {\em Comm. Anal. Geom.}, 27(1):161--209, 2019.

\bibitem[NWX99]{NisWeiXu}
Victor Nistor, Alan Weinstein, and Ping Xu.
\newblock Pseudodifferential operators on differential groupoids.
\newblock {\em Pacific J. Math.}, 189(1):117--152, 1999.

\bibitem[Pon06]{Ponge}
Rapha{\"e}l Ponge.
\newblock The tangent groupoid of a {H}eisenberg manifold.
\newblock {\em Pacific J. Math.}, 227(1):151--175, 2006.

\bibitem[Pra85]{Pradines:graph}
Jean Pradines.
\newblock How to define the differentiable graph of a singular foliation.
\newblock {\em Cahiers Topologie G\'{e}om. Diff\'{e}rentielle Cat\'{e}g.},
  26(4):339--380, 1985.

\bibitem[So13]{So:boundary_groupoids}
Bing~Kwan So.
\newblock On the full calculus of pseudo-differential operators on boundary
  groupoids with polynomial growth.
\newblock {\em Adv. Math.}, 237:1--32, 2013.

\bibitem[Tay]{Taylor:microlocal}
Michael Taylor.
\newblock Noncommutative microlocal analysis, part {I} (revised version).
\newblock \url{http://www.unc.edu/math/Faculty/met/NCMLMS.pdf}.

\bibitem[vE10a]{VanErp:AS1}
Erik van Erp.
\newblock The {A}tiyah-{S}inger index formula for subelliptic operators on
  contact manifolds. {P}art {I}.
\newblock {\em Ann. of Math. (2)}, 171(3):1647--1681, 2010.

\bibitem[vE10b]{VanErp:AS2}
Erik van Erp.
\newblock The {A}tiyah-{S}inger index formula for subelliptic operators on
  contact manifolds. {P}art {II}.
\newblock {\em Ann. of Math. (2)}, 171(3):1683--1706, 2010.

\bibitem[vEY]{vEY2}
Erik van Erp and Robert Yuncken.
\newblock A groupoid approach to pseudodifferential operators.
\newblock {\em J. Reine Agnew. Math}.
\newblock To appear.

\end{thebibliography}

\end{document}